\numberwithin{equation}{section}
\newtheorem{theorem}{Theorem}[section]
\newtheorem{proposition}[theorem]{Proposition}
\newtheorem{corollary}[theorem]{Corollary}
\newtheorem{lemma}[theorem]{Lemma}
\theoremstyle{definition}
\newtheorem{remark}[theorem]{Remark}
\newcommand{\R}{\mathbb{R}}
\newcommand{\N}{\mathbb{N}}
\begin{document}

\title
 [Double critical problem involving fractional Laplacian with Hardy term ]
 {The existence of a nontrivial weak solution to a double critical problem involving fractional Laplacian in ${\R}^n$ with a Hardy term}\footnotetext{*This work was supported by Natural Science Foundation of China (Grant No. 11771166), Hubei Key Laboratory of Mathematical Sciences and Program for Changjiang Scholars and Innovative Research Team in University $\# $ IRT17R46.}

\maketitle
 \begin{center}
\author{Gongbao Li}
\footnote{Corresponding Author: Gongbao Li. G. Li's Email addresses: ligb@mail.ccnu.edu.cn; T. Yang's Email addresses: yangt@mails.ccnu.edu.cn.}
and
\author{Tao Yang}
\end{center}

\begin{center}
\address{Hubei Key Laboratory of Mathematical Sciences and School of Mathematics and Statistics, Central China
Normal University, Wuhan, 430079, P. R. China }
\end{center}

\maketitle
\begin{abstract}
In this paper, we consider the existence of nontrivial weak solutions to a double critical problem involving fractional Laplacian with a Hardy term:
\begin{equation} \label{eq0.1}
(-\Delta)^{s}u-{\gamma} {\frac{u}{|x|^{2s}}}= {\frac{{|u|}^{ {2^{*}_{s}}(\beta)-2}u}{|x|^{\beta}}}+ \big [   I_{\mu}* F_{\alpha}(\cdot,u)  \big](x)f_{\alpha}(x,u),
  \ \ u \in {\dot{H}}^s(\R^{n})
\end{equation}
where $s \in(0,1)$, $0\leq \alpha,\beta<2s<n$, $\mu \in (0,n)$, $\gamma<\gamma_{H}$, $I_{\mu}(x)=|x|^{-\mu}$, $F_{\alpha}(x,u)=\frac{      {|u(x)|}^{ {2^{\#}_{\mu} }(\alpha)}      }{  {|x|}^{  {\delta_{\mu} (\alpha)} }  }$, $f_{\alpha}(x,u)=\frac{      {|u(x)|}^{{ 2^{\#}_{\mu} }(\alpha)-2}u(x)      }{  {|x|}^{  {\delta_{\mu} (\alpha)}            }  }$, $2^{\#}_{\mu} (\alpha)=(1-\frac{\mu}{2n})\cdot 2^{*}_{s} (\alpha)$,  $\delta_{\mu} (\alpha)=(1-\frac{\mu}{2n})\alpha$,   ${2^{*}_{s}}(\alpha)=\frac{2(n-\alpha)}{n-2s}$ and  $\gamma_{H}=4^s\frac{\Gamma^2(\frac{n+2s}{4})}
{\Gamma^2(\frac{n-2s}{4})}$. We show that problem (\ref{eq0.1}) admits at least a weak solution under some conditions.

To prove the main result, we develop some useful tools based on a weighted Morrey space. To be precise, we discover the embeddings
\begin{equation} \label{eq0.2}
{\dot{H}}^s(\R^{n}) \hookrightarrow  {L}^{2^*_{s}(\alpha)}(\R^{n},|y|^{-\alpha}) \hookrightarrow L^{p,\frac{n-2s}{2}p+pr}(\R^{n},|y|^{-pr})
\end{equation}
where $s \in (0,1)$, $0<\alpha<2s<n$, $p\in[1,2^*_{s}(\alpha))$, $r=\frac{\alpha}{ 2^*_{s}(\alpha) }$; We also establish an improved Sobolev inequality.

By using mountain pass lemma along with an improved Sobolev inequality, we obtain a nontrivial weak solution to problem (\ref{eq0.1}) in a direct way.  It is worth while to point out that the improved Sobolev inequality could be applied to simplify the proof of the existence results in \cite{NGSS} and \cite{RFPP}.

{\bf Key words }: Existence of a weak solution; fractional Laplacian; double critical exponents; Hardy term; weighted Morrey space; improved Sobolev inequality.

{\bf 2010 Mathematics Subject Classification }: 35A01, 35A23, 35B33, 35R11, 35R70
\end{abstract}

\maketitle

\section{Introduction and Main Result}

\setcounter{equation}{0}
In this paper, we consider the existence of nontrivial weak solutions to a double critical problem involving fractional Laplacian with a Hardy term:
\begin{equation} \label{eq1.1}
(-\Delta)^{s}u-{\gamma} {\frac{u}{|x|^{2s}}}= {\frac{{|u|}^{ {2^{*}_{s}}(\beta)-2}u}{|x|^{\beta}}}+ \big [   I_{\mu}* F_{\alpha}(\cdot,u)  \big](x)f_{\alpha}(x,u),
  \ \ u \in {\dot{H}}^s(\R^{n})
\end{equation}
where $s \in(0,1)$, $0\leq \alpha,\beta<2s<n$, $\mu \in (0,n)$, $\gamma<\gamma_{H}$, $I_{\mu}(x)=|x|^{-\mu}$, $F_{\alpha}(x,u)=\frac{      {|u(x)|}^{ {2^{\#}_{\mu} }(\alpha)}      }{  {|x|}^{  {\delta_{\mu} (\alpha)} }  }$,  $f_{\alpha}(x,u)=\frac{      {|u(x)|}^{{ 2^{\#}_{\mu} }(\alpha)-2}u(x)      }{  {|x|}^{  {\delta_{\mu} (\alpha)}            }  }$, $2^{\#}_{\mu} (\alpha)=(1-\frac{\mu}{2n})\cdot 2^{*}_{s} (\alpha)$,  $\delta_{\mu} (\alpha)=(1-\frac{\mu}{2n})\alpha$,   ${2^{*}_{s}}(\alpha)=\frac{2(n-\alpha)}{n-2s}$ and  $\gamma_{H}=4^s\frac{\Gamma^2(\frac{n+2s}{4})}
{\Gamma^2(\frac{n-2s}{4})}$($\Gamma$ denotes the Gamma function). Intuitively, (\ref{eq1.1}) is
$$
(-\Delta)^{s}u-{\gamma} {\frac{u}{|x|^{2s}}}= {\frac{{|u|}^{ {2^{*}_{s}}(\beta)-2}u}{|x|^{\beta}}}+ \Big( \int_{\R^{n}} \frac{ {|u(y)|}^{ {2^{\#}_{\mu} }(\alpha)} }{ {|x-y|}^{\mu} {|y|}^{  {\delta_{\mu} (\alpha)} } }dy   \Big) \frac{      {|u(x)|}^{{ 2^{\#}_{\mu} }(\alpha)-2}u(x)      }{  {|x|}^{  {\delta_{\mu} (\alpha)}            }  },
  ~~u \in {\dot{H}}^s(\R^{n}).
$$
Noticing that ${2^{*}_{s}}(\alpha)$ is the critical fractional Hardy-Sobolev exponent and  $\gamma_{H}$ is the best fractional Hardy constant on $\R^{n}$ (See Lemmas \ref{lemma2.1}-\ref{lemma2.2}). It is worth while to point out that $\big(2^{\#}_{\mu} (\alpha),\delta_{\mu} (\alpha)\big)$ is a pair of critical exponents in the sense of Fractional Hardy-Sobolev inequality and Hardy-Littlewood-Sobolev inequality, which can be seen later in (\ref{eq2.6}). The fractional Laplacian $(-\Delta)^{s}$ is defined on the Schwartz class (space of rapidly decaying $C^{\infty}$ functions in $\R^{n}$) through Fourier transform,
$$\widehat{(-\Delta)^{s}u}(\xi)= |\xi|^{2s} \hat{u}(\xi), \forall \xi \in \R^{n}      $$
where $\hat{u}(\xi)=\frac{1}{(2\pi)^{n/2}} \int_{\R^{n}} e^{-i\xi x}u(x)dx$ is the Fourier transform of $u$.

Throughout this paper, we denote the norm of $L^{p}(\R^{n},|y|^{-\lambda})$ by $${||u||}_{L^{p}(\R^{n},|y|^{-\lambda})}:=\Big ( \int_{\R^{n}}  \frac{|u(y)|^p}{  |y|^{\lambda} }dy  \Big )^{\frac{1}{p}}$$
for any $0 \leq \lambda<n$ and $p\in[1,+\infty)$. The homogeneous fractional Sobolev space of order $s\in(0,1)$ is defined as
$$\dot{H}^{s}(\R^{n}):=\{u\in  {L}^{2^*_s}(\R^n) : |\xi|^{s}\hat{u}(\xi) \in  {L}^{2}(\R^n)  \},$$
which is in fact the completion of $C_{0}^{\infty}(\R^{n})$ under the norm
$$   ||u||_{\dot{H}^{s}(\R^{n})}^2= \int_{\R^{n}}|\xi|^{2s}|\hat{u}(\xi)|^2d\xi= \int_{\R^{n}} |(-\Delta)^{s/2}u|^2dx.$$
The dual space of ${\dot{H}}^s(\R^{n})$ is denoted by ${{\dot{H}}^s(\R^{n})^{'}}$. See \cite{ENEV} and references therein for the basics on the fractional Laplacian.

We say that $u \in \dot{H}^{s}(\R^{n})$ is a weak solution to $(\ref{eq1.1})$ if
$$ \int_{\R^{n}} \big[(-\Delta)^{\frac{s}{2}}u(-\Delta)^{\frac{s}{2}}{\phi}
-{\gamma} {\frac{u\phi}{|x|^{2s}}}\big]dx=\int_{\R^{n}} {\frac{{|u|}^{ {2^{*}_{s}}(\beta)-2}u\phi}{|x|^{\beta}}}dx+  \int_{\R^{n}}  \big [   I_{\mu}* F_{\alpha}(\cdot,u)  \big](x)f_{\alpha}(x,u)\phi(x)dx  $$
for any $\phi \in \dot{H}^{s}(\R^{n})$. Denote
\begin{equation} \label{eq2.01}
  B_{\alpha}(u,v)=\int_{\R^{n}} \int_{\R^{n}} \frac{{|u(x)|}^{{ 2^{\#}_{\mu} }(\alpha)} {|v(y)|}^{{ 2^{\#}_{\mu} }(\alpha)} }{ {|x|}^{  {\delta_{\mu} (\alpha)}            } {|x-y|}^{\mu} {|y|}^{   {\delta_{\mu} (\alpha)}            } } dx dy,~~~~\forall u,v \in {\dot{H}}^s(\R^{n})
\end{equation}
where $s \in(0,1)$, $0 \leq \alpha<2s<n$, $\mu \in (0,n)$,  $2^{\#}_{\mu} (\alpha)=(1-\frac{\mu}{2n}) 2^{*}_{s} (\alpha)$ and $\delta_{\mu}(\alpha)=(1-\frac{\mu}{2n})\alpha$. In particular, ${2^{*}_{s}}:={2^{*}_{s}}(0)=\frac{2n}{n-2s}$ and $2^{\#}_{\mu}:=2^{\#}_{\mu} (0)=\frac{2n-\mu}{n-2s}$. Set $\tilde{u}_{t}(x)=t^{ \frac{n-2s}{2} }u(tx)$ and $\tilde{v}_{t}(y)=t^{ \frac{n-2s}{2} }v(ty)$, $t>0$, then $B_{\alpha}(\tilde{u}_{t},\tilde{v}_{t})=B_{\alpha}(u,v)$. The energy functional associated to (\ref{eq1.1}) is defined as:
\begin{align*}
  I(u)=\frac{1}{2}\int_{\R^{n}} \big[{|(-\Delta)^{\frac{s}{2}}u|}^2
-{\gamma} {\frac{u^2}{|x|^{2s}}}\big]dx
-\frac{1}{2^{*}_{s}(\beta)}\int_{\R^n}{\frac{{|u|}^{ {2^{*}_{s}}(\beta)}}{|x|^{\beta}}}dx
-\frac{1}{2 \cdot { 2^{\#}_{\mu} }(\alpha) } B_{\alpha}(u,u).
\end{align*}
A nontrivial critical point of $I$ is a nontrivial weak solution to equation (\ref{eq1.1}).

The problem of multiple critical exponents has been extensively studied by scholars, see \cite{RFPP}, \cite{NGSS}, \cite{JYFW}, \cite{NGCY}, \cite{NGFR}, \cite{Ngfr},  \cite{DAEJ}, \cite{DKGL}, \cite{CWJ}, \cite{HYKD} and \cite{JWJP}. Dating back to \cite{RFPP}, R. Filippucci et al. studied the double critical equation of Emden-Fowler type:
\begin{equation} \label{eq1.2}
 -{\Delta}_pu-{\kappa} {\frac{u^{p-1}}{|x|^{p}}}=u^{p^{*}-1}+\frac{u^{ {p^{*}(\alpha)-1}}}{|x|^{\alpha}} ~~~~\mbox{in}~~{\R}^n, u \geq 0, \ \ u \in {D}^{1,p}(\R^{n})
 \end{equation}
where $n\geq 2$, $p \in (1,n)$, $\alpha \in (0,p)$, $p^{*}=\frac{np}{n-p}$, $p^{*}(\alpha)=\frac{p(n-\alpha)}{n-p}$, $0\leq \kappa < \bar{\kappa}=(\frac{n-p}{p})^p$ and ${\Delta}_p u:=div({|\nabla u|}^{p-2}\nabla u)$ is the p-Laplacian of $u$. Their work space ${D}^{1,p}(\R^{n})$ is defined as the completion of $C^{\infty}_{0}({\R}^n)$ under the norm $||u||_{{D}^{1,p}(\R^{n})}=(\int_{\R^{n}} |\nabla u|^p dx)^{\frac{1}{p}}$, i.e.
$${D}^{1,p}(\R^{n}):=\{u\in  {L}^{p^*}(\R^n) : \nabla u \in  {L}^{p}(\R^n)  \}.$$
Through truncation skills, the authors of \cite{RFPP} showed the existence of minimizers for
$$ \bar{\Lambda}(n,\kappa,\alpha)=  \mathop {\inf }\limits_{u \in {D}^{1,p}(\R^{n}) \setminus \{0\}}   \frac{\int_{\R^{n}} |\nabla u|^p dx-{\kappa} \int_{\R^{n}} {\frac{{|u|}^p}{|x|^{p}}}dx}{\Big( \int_{\R^{n}} \frac{{|u|}^{ { p^{*} }(\alpha)}}{|x|^{\alpha}}dx  \Big)^{\frac{p}{  p^{*}(\alpha)  }}} $$
provided $\alpha \in (0,p)$ and $\kappa < \bar{\kappa}$ or $\alpha=0$ and $0\leq \kappa < \bar{\kappa}$. Then they obtain a nontrivial weak solution to problem $(\ref{eq1.2})$ by using mountain pass lemma and a careful analysis of concentration of the corresponding $(PS)$ sequence. 

In \cite{NGFR}, N. Ghoussoub and F. Robert considered the Dirichlet boundary value problem:
\[
\left\{ \begin{gathered}
   -\Delta u-{\gamma} {\frac{u}{|x|^{2}}}=\lambda u+{\frac{{u}^{ {2^{*}}(\alpha)-1}}{|x|^{\alpha}}} {\text{ on }}{\Omega}, \hfill \\
  u > 0{\text{ on }}{\Omega}, \hfill \\
  u = 0{\text{ on }}{\partial\Omega}, \hfill \\
\end{gathered}  \right.
\]
where $\Omega$ is a smooth bounded domain in $\R^n $ such that $0\in\Omega$, $n \geq 3$, $\gamma<\frac{(n-2)^2}{4}$, $0\leq \alpha <2$, $2^{*}(\alpha)=\frac{2(n-\alpha)}{n-2}$, $0\leq \lambda < \lambda_{\gamma}(\Omega)$ and $\lambda_{\gamma}(\Omega)$ is the first
eigenvalue of $-\Delta-{\frac{\gamma}{|x|^{2}}}$ on $H_0^1(\Omega)\setminus\{0\}$. Fruitful achievements have been made in their work. Before long, N. Ghoussoub et al. \cite{Ngfr} extends the results in \cite{NGFR} to nonlocal case.

N. Ghoussoub and  S. Shakerian \cite{NGSS} generalized the results in \cite{RFPP} to $(-\Delta)^{s}$ operater and considered the problem
\begin{equation} \label{eq1.3}
(-\Delta)^{s}u-{\gamma} {\frac{u}{|x|^{2s}}}={|u|}^{ {2^{*}_{s}}-2}u+{\frac{{|u|}^{ {2^{*}_{s}}(\alpha)-2}u}{|x|^{\alpha}}},      \ \ u \in \dot{H}^{s}(\R^{n})
\end{equation}
for $s \in(0,1)$, $0< \alpha<2s<n$ and $0 \leq\gamma<\gamma_{H}$. Through the weighted harmonic extension for the fractional Laplacian obtained by L. Caffarelli and L. Silvestre in \cite{LCLS}, N. Ghoussoub et al. showed the existence of a nontrivial weak solution $w\in X^{s}({\R}^{n+1}_+)$ to
\begin{align*}
\left\{ \begin{gathered}
 -div(y^{1-2s}{\nabla w})=0, \ \ \ \ \ \ \ \ \ \ \ \ \ \ \ \mbox{in}~~{\R}_{+}^{n+1}  \\
\frac{\partial w}{\partial {{\nu}^s}}={\gamma} {\frac{w(x,0)}{|x|^{2s}}}+{|w(x,0)|}^{ {2^{*}_{s}}-2}w(x,0)+{\frac{{|w(x,0)|}^{ {2^{*}_{s}}(\alpha)-2}w(x,0)}{|x|^{\alpha}}}  \  \ \mbox{on}~~{\R}^{n}
\end{gathered} \right.
\end{align*}
where $\frac{\partial w}{\partial {{\nu}}^s}:=-k_{s}\lim_{y \to 0^+ } y^{1-2s}\frac{\partial w(x,y)}{\partial y}$ and the space $X^{s}({\R}^{n+1}_+)$ is defined as the closure of $C_0^{\infty}(\overline{{\R}^{n+1}_+})$ under the norm
$$  {||w||}_{X^{s}({\R}^{n+1}_+)}:=\Big(k_{s}\int_{{\R}_{+}^{n+1}} y^{1-2s}{|\nabla w|}^2dxdy\Big)^{\frac{1}{2}}$$
with $k_{s}=\frac{\Gamma(s)}{2^{1-2s}\Gamma(1-s)}$. Denote the trace of $w(x,y) \in X^{s}({\R}^{n+1}_+)$ on ${\R}^n \times \{y=0\}$ by $w(x,0)$, then $u(x)=w(x,0)$ is in $\dot{H}^{s}(\R^{n})$ and is a weak solution to equation (\ref{eq1.3}).

In \cite{JYFW}, J. Yang and F. Wu studied
\begin{equation} \label{eq1.5}
(-\Delta)^{s}u-{\gamma} {\frac{u}{|x|^{2s}}}={\frac{{|u|}^{ {2^{*}_{s}}(\beta)-2}u}{|x|^{\beta}}}+(I_{\mu}*{|u|}^{ {2^{\#}_{\mu}}}){|u|}^{ {2^{\#}_{\mu}}-2}u, \ \ u \in \dot{H}^{s}(\R^{n})
\end{equation}
where $s\in(0,1)$, $0<\beta<2s<n$, $\mu \in(n-2s,n)$, $\gamma<\gamma_{H}$ , $I_{\mu}(x)={|x|}^{-\mu}$, $2^{\#}_{\mu}=\frac{2n-\mu}{n-2s}$ and  ${2^{*}_{s}}(\beta)=\frac{2(n-\beta)}{n-2s}$. By using the  Nehari manifold method, they proved that equation $(\ref{eq1.5})$ has a nontrivial weak solution if $0<\gamma<\gamma_{H}$. For the cases of the standard Laplacian, biharmonic operator and p-biharmonic operator, the interested reader can refer to \cite{FCZQ}, \cite{JLCS}, \cite{NGAM}, \cite{NGFR}, \cite{NGCY}, \cite{YWYS}, \cite{DAEJ} and \cite{AEKS}.

Motivated by the above papers, we consider the existence of nontrivial weak solutions to problem $(\ref{eq1.1})$.
To the best of our knowledge, $(\ref{eq1.1})$ has not been studied before.

Our main results are as follows:

\begin{theorem} \label{th1.1}
The problem (\ref{eq1.1}) possesses at least a nontrivial weak solution provided either \textbf{(I)} $s \in(0,1)$, $0<\alpha,\beta<2s<n$, $\mu \in (0,n)$ and $\gamma<\gamma_{H}$ \\
or \textbf{(II)} $s \in(0,1)$, $0\leq\alpha,\beta<2s<n$ while $\alpha \cdot \beta=0$, $\mu \in (0,n)$ and $0\leq \gamma<\gamma_{H}$.
\end{theorem}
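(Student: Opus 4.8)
The plan is to run the mountain pass scheme for the functional $I$ and to restore compactness below the first noncompactness level by means of the improved Sobolev inequality established earlier. First one checks the mountain pass geometry. Since $\gamma<\gamma_{H}$, the fractional Hardy inequality (Lemma~\ref{lemma2.2}) shows that $Q(u):=\int_{\R^{n}}\big(|(-\Delta)^{s/2}u|^{2}-\gamma\,u^{2}/|x|^{2s}\big)\,dx$ is nonnegative and comparable to $\|u\|_{\dot{H}^{s}}^{2}$ on the admissible range of $\gamma$, so $Q(\cdot)^{1/2}$ is an equivalent norm on $\dot{H}^{s}(\R^{n})$. The fractional Hardy--Sobolev inequality bounds $\int_{\R^{n}}|u|^{2^{*}_{s}(\beta)}/|x|^{\beta}\,dx$ by a constant times $Q(u)^{2^{*}_{s}(\beta)/2}$, and the Hardy--Littlewood--Sobolev inequality together with (\ref{eq2.6}) bounds $B_{\alpha}(u,u)$ by a constant times $Q(u)^{2^{\#}_{\mu}(\alpha)}$; since $2^{*}_{s}(\beta)>2$ and $2\cdot 2^{\#}_{\mu}(\alpha)>2$, there are $\rho,a>0$ with $I(u)\ge a$ whenever $Q(u)^{1/2}=\rho$, while $I(tu_{0})\to-\infty$ as $t\to+\infty$ for any fixed $u_{0}\neq 0$. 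Hence the mountain pass value $c:=\inf_{\eta\in\Gamma}\max_{t\in[0,1]}I(\eta(t))$, with $\Gamma=\{\eta\in C([0,1];\dot{H}^{s}):\eta(0)=0,\ I(\eta(1))<0\}$, satisfies $c\ge a>0$, and there is a Palais--Smale sequence $\{u_{k}\}$ at the level $c$. Combining $I(u_{k})\to c$ with $\langle I'(u_{k}),u_{k}\rangle=o(1)\|u_{k}\|_{\dot{H}^{s}}$ and using $\theta:=\min\{2^{*}_{s}(\beta),\,2\cdot 2^{\#}_{\mu}(\alpha)\}>2$ gives $(\tfrac12-\tfrac1\theta)Q(u_{k})\le c+o(1)+o(1)\|u_{k}\|_{\dot{H}^{s}}$, so $\{u_{k}\}$ is bounded; up to a subsequence $u_{k}\rightharpoonup u_{0}$ in $\dot{H}^{s}(\R^{n})$ and a.e., and the weak limit $u_{0}$ is itself a weak solution of (\ref{eq1.1}).

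The decisive step is that $c$ stays strictly below the first noncompactness level $c^{*}$, which I would take to be the smallest mountain pass (equivalently, ground state) energy among the limiting problems obtained by keeping only one of the two critical nonlinearities --- the fractional Hardy--Sobolev equation for $Q$ with the single weight $|x|^{-\beta}$, and the Choquard-type equation for $B_{\alpha}$ --- each of which is dilation invariant with explicitly computable extremal energy. I would prove $0<c<c^{*}$ by inserting suitably truncated and rescaled extremal functions (``bubbles'') into the characterisation of $c$ and optimising in the scaling parameter. The analysis of $c<c^{*}$ then splits exactly according to the alternatives in the statement: in case \textbf{(I)} both critical terms and the Hardy potential are anchored at the origin, the Hardy term only lowers the relevant best constant, and the strict inequality is reachable for every $\gamma<\gamma_{H}$; in case \textbf{(II)} one of the critical terms is translation invariant, bubbles may escape to infinity where the potential degenerates, and one is led to require $0\le\gamma<\gamma_{H}$.

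To conclude, one shows that a Palais--Smale sequence at a level $c\in(0,c^{*})$ cannot vanish weakly. Put $v_{k}:=u_{k}-u_{0}\rightharpoonup 0$; by the Brezis--Lieb lemma applied to both nonlinear terms together with weak lower semicontinuity of $Q$, the sequence $\{v_{k}\}$ is again Palais--Smale for $I$ at the level $c':=c-I(u_{0})$, and $I(u_{0})\ge 0$ because $\langle I'(u_{0}),u_{0}\rangle=0$ forces $I(u_{0})=(\tfrac12-\tfrac1{2^{*}_{s}(\beta)})\int_{\R^{n}}|u_{0}|^{2^{*}_{s}(\beta)}/|x|^{\beta}\,dx+(\tfrac12-\tfrac1{2\cdot 2^{\#}_{\mu}(\alpha)})B_{\alpha}(u_{0},u_{0})\ge 0$. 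If $v_{k}\not\to 0$ in $\dot{H}^{s}$, then $\langle I'(v_{k}),v_{k}\rangle=o(1)$ forces $\int_{\R^{n}}|v_{k}|^{2^{*}_{s}(\beta)}/|x|^{\beta}\,dx+B_{\alpha}(v_{k},v_{k})\ge\delta>0$; the improved Sobolev inequality --- in its versions for the weights $|x|^{-\alpha}$ and $|x|^{-\beta}$, based on the weighted Morrey embedding (\ref{eq0.2}) --- then prevents the associated weighted Morrey norms of $v_{k}$ from tending to $0$, whence a rescaling--translation $w_{k}(x):=\lambda_{k}^{(n-2s)/2}v_{k}(\lambda_{k}x+x_{k})$ converges weakly to a nontrivial solution $w$ of one of the limiting equations. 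Since any such $w$ carries energy at least $c^{*}$, we obtain $c=I(u_{0})+c'\ge c^{*}$, contradicting $c<c^{*}$. Therefore $v_{k}\to 0$, i.e. $u_{k}\to u_{0}$ strongly, so $I(u_{0})=c>0$ and $u_{0}\not\equiv 0$ is the desired nontrivial weak solution.

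The main obstacle is the interplay, inside the threshold $c^{*}$, between two independent dilation-invariant critical nonlinearities --- one of them translation invariant when $\alpha$ or $\beta$ vanishes --- in the presence of the Hardy potential: one must simultaneously pick a mountain pass path whose maximal energy undercuts the smaller limiting level and exclude that a noncompact Palais--Smale sequence recovers that level, which requires distinguishing bubbles concentrating at the origin from bubbles escaping to infinity; this distinction is precisely what separates cases \textbf{(I)} and \textbf{(II)}. The weighted Morrey embedding (\ref{eq0.2}) and the resulting improved Sobolev inequality are what make the second, compactness, half of the argument direct, avoiding the harmonic-extension analysis used in \cite{NGSS} and \cite{RFPP}.
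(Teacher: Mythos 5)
Your overall architecture (mountain pass below a threshold $c^*$ built from the two single-nonlinearity minimization problems, plus boundedness of the $(PS)$ sequence) matches the paper, but the two steps that carry all the difficulty are handled by methods that fail for this particular equation. First, for $c<c^*$ you propose to insert ``suitably truncated and rescaled extremal functions'' with ``explicitly computable extremal energy.'' The extremals of $S_{\mu}(n,s,\gamma,\alpha)$ and $\Lambda(n,s,\gamma,\beta)$ are not explicit (there is a Hardy term and a doubly weighted convolution), and truncation is exactly the tool the paper rules out: $(-\Delta)^{s}$ is nonlocal and the Choquard term couples far-apart regions, so cutting off a bubble does not localize the energy. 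The paper instead first proves in Section 4 that $S_{\mu}(n,s,\gamma,\alpha)$ and $\Lambda(n,s,\gamma,\beta)$ are \emph{attained} (this is where the weighted Morrey embedding and the improved Sobolev inequality do their real work, or, when $\alpha=0$ resp.\ $\beta=0$ and $\gamma\ge 0$, symmetric decreasing rearrangement --- which is the actual reason for the sign restriction on $\gamma$ in case (II)), and then takes the minimizer itself as the endpoint $v_0$ of the mountain pass path: since $I(tv_0)$ differs from the reduced functional $f_1(t)$ by the subtraction of the \emph{other} critical term, which is strictly positive at the maximizer, one gets $\sup_{t\ge0}I(tv_0)<c^*$ with no truncation and no asymptotic expansion. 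Your proposal omits the existence-of-minimizers step entirely, and without it neither your threshold nor your path construction is available.

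Second, your compactness step assumes that a noncompact piece, after rescaling \emph{and translation} $w_k(x)=\lambda_k^{(n-2s)/2}v_k(\lambda_kx+x_k)$, converges to a nontrivial solution of ``one of the limiting equations'' carrying energy at least $c^*$. The paper points out that no such limiting equation exists when $\alpha>0$: the weighted Choquard equation (\ref{eq1.007}) is not translation invariant, so if $|x_k|/\lambda_k\to\infty$ every singular weight (and the Hardy potential) degenerates and the ``limit equation'' is $(-\Delta)^sw=0$, giving $w\equiv0$ rather than a nontrivial object of quantized energy --- your dichotomy collapses precisely where it is needed. The paper's route is different and avoids this: from $c<c^*$ it derives the system $d_1^{2/2^*_s(\beta)}A_1\le d_2$, $d_2^{1/2^{\#}_{\mu}(\alpha)}A_2\le d_1$ with $A_1,A_2>0$, forcing $d_1,d_2>0$; the improved Sobolev inequality then bounds the weighted Morrey norm of $u_k$ from below; one rescales by $\lambda_k$ \emph{only} and proves that the centers $\tilde{x}_k=x_k/\lambda_k$ stay bounded (otherwise the weighted local integral would vanish), so the rescaled sequence is a $(PS)_c$ sequence for the \emph{original} functional with a nontrivial weak limit, which is the desired solution. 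No strong convergence $u_k\to u_0$ is claimed or needed, whereas your argument asserts it without supplying the Brezis--Lieb energy splitting that would justify it.
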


\begin{remark}
Theorem \ref{th1.1} indicates that we can relax the lower bound of $\gamma$ in equation (\ref{eq1.1}) provided $\alpha,\beta>0$, which is different from equations (\ref{eq1.2})-(\ref{eq1.5}). In the meanwhile, Theorem \ref{th1.1} relaxes the order ${\mu}$ in $I_{\mu}(x)=|x|^{-\mu}$ because equation (\ref{eq1.5}) only allows $\mu \in(n-2s,n)$. Moreover, equation (\ref{eq1.5}) is a special case of equation (\ref{eq1.1}) with $\alpha=0$.
\end{remark}

There are three main difficulties in the proof of Theorem (\ref{th1.1}). Firstly, truncation skills used in \cite{RFPP} and \cite{NGSS} do not work if we choose $\dot{H}^{s}(\R^{n})$ as the work space since $(-\Delta)^{s}$ is a nonlocal operator. Although the weighted harmonic extension can overcome the difficulty of the non-locality of $(-\Delta)^{s}$ if we work in $X^{s}({\R}^{n+1}_+)$, the appearance of the convolution term in (\ref{eq1.1}) still prevents us from using truncation skills. Secondly, the compactness of the corresponding $(PS)$ sequence may not hold since equation (\ref{eq1.1}) has two critical nonlinearities. For the equation with a single critical nonlinearity
\begin{equation} \label{eq1.006}
-\Delta u+\lambda u={|u|}^{2^*-2}u~~~~\mbox{in}~~~~\Omega
\end{equation}
where $\Omega$ is a bounded domain of $\R^{n}$, $n\geq3$, $-{\lambda}_1(\Omega)<\lambda<0$ and $2^*=\frac{2n}{n-2}$, H. Br$\acute{e}$zis and L. Nirenberg in \cite{HBNI} used the Br$\acute{e}$zis-Lieb lemma to prove the compactness of the $(PS)_{\tilde{c}}$ sequence if $\tilde{c}<\tilde{{c}}^*$ where $\tilde{{c}}^*=\frac{1}{n}S^{\frac{n}{2}}$ and $S=\mathop {\inf }\limits_{u \in {D}^{1,2}(\R^{n}) \setminus \{0\}}   \frac{\int_{\R^{n}} |\nabla u|^2 dx}{( \int_{\R^{n}} {|u|}^{ 2^{*} }dx)^{\frac{2}{  2^{*} }}}$.
It seems that the method of \cite{HBNI} does not apply to (\ref{eq1.1}) as the Br$\acute{e}$zis-Lieb type lemma would lead to a system of inequalities which does not have explicit nontrivial solutions. Thirdly, there is an asymptotic competition between the energy carried by the two critical nonlinearities, so we have trouble in ruling out the "vanishing" of the corresponding $(PS)$ sequence.
Naturally, we would hope to overcome this difficulty by using the Nehari manifold method as in \cite{JYFW}, but unfortunately, the corresponding limit equation does not exist since
\begin{equation} \label{eq1.007}
 (-\Delta)^{s}v= \Big( \int_{\R^{n}} \frac{ {|v(y)|}^{ {2^{\#}_{\mu} }(\alpha)} }{ {|x-y|}^{\mu} {|y|}^{  {\delta_{\mu} (\alpha)} } }dy   \Big) \frac{      {|v(x)|}^{{ 2^{\#}_{\mu} }(\alpha)-2}v(x)      }{  {|x|}^{  {\delta_{\mu} (\alpha)}            }  }
\end{equation}
is not translation invariant. To see this, let's go back to equation (\ref{eq1.5}):
$$(-\Delta)^{s}u-{\gamma} {\frac{u}{|x|^{2s}}}={\frac{{|u|}^{ {2^{*}_{s}}(\beta)-2}u}{|x|^{\beta}}}+(I_{\mu}*{|u|}^{ {2^{\#}_{\mu}}}){|u|}^{ {2^{\#}_{\mu}}-2}u$$
which is similar to equation (\ref{eq1.1}), the authors in \cite{JYFW} obtained a nontrivial weak solution to $(\ref{eq1.5})$ by using the Nehari manifold method. The key step was to rule out the "vanishing" of the corresponding $(PS)$ sequence by using the limit equation of $(\ref{eq1.5})$. As can be seen in Section 3 in \cite{JYFW}, there exists a $(PS)$ sequence $\{u_k\}$ for the energy functional corresponding to $(\ref{eq1.5})$ such that $u_k \rightharpoonup u$ in ${\dot{H}}^s(\R^{n})$ with $u$ solving (\ref{eq1.5}). It may occur that $u\equiv0$. Taking ${v}_k(x)=\lambda_k^{ \frac{n-2s}{2} }u_k({\lambda}_k x+x_k)$ where ${\lambda}_k>0$, $x_k \in \R^n$ and $\frac{x_k}{{\lambda}_k} \to \infty $ as $k\to +\infty$, they derived that $v_k \rightharpoonup v$ in ${\dot{H}}^s(\R^{n})$ and
$$ \int_{\R^n}\frac{v_k\phi}{|x+\frac{x_k}{{\lambda}_k}|^{2s}} \to 0,~~~~\int_{\R^n} {\frac{{|v_k|}^{ {2^{*}_{s}}(\beta)-2}v_k\phi}{|x+\frac{x_k}{{\lambda}_k}|^{\beta}}} \to 0~~~~\mbox{as}~~~~ k\to +\infty$$
for any $\phi \in C_0^{\infty}(\R^{n})$. Then $v$ weakly solves
\begin{equation} \label{eq1.04}
(-\Delta)^{s}v=(I_{\mu}*{|v|}^{ {2^{\#}_{\mu}}}){|v|}^{ {2^{\#}_{\mu}}-2}v.
\end{equation}
Using the limit equation (\ref{eq1.04}), they ruled out the "vanishing" of the $(PS)$ sequence for the energy functional corresponding to $(\ref{eq1.5})$. Clearly, this method does not apply to (\ref{eq1.1}) since $(\ref{eq1.007})$ is not translation invariant.

For these reasons, we use a direct way to prove Theorem \ref{eq1.1}. The crucial point is the utilization of the embeddings(See Section 3)
\begin{equation}  \label{eq1.06}
{\dot{H}}^s(\R^{n}) \hookrightarrow  {L}^{2^*_{s}(\alpha)}(\R^{n},|y|^{-\alpha}) \hookrightarrow L^{p,\frac{n-2s}{2}p+pr}(\R^{n},|y|^{-pr})
\end{equation}
for $s \in (0,1)$, $0<\alpha<2s<n$, ${2^{*}_{s}}(\alpha)=\frac{2(n-\alpha)}{n-2s}$, $p\in[1,2^*_{s}(\alpha))$ and $r=\frac{\alpha}{ 2^*_{s}(\alpha) }$, and the following improved Sobolev inequalities:

\begin{proposition}\label{pro1.4}
Let $s \in (0,1)$ and $0<\alpha<2s<n$. Then there exists $C=C(n,s,\alpha)>0$ such that for any $\theta \in (\bar{\theta},1)$ and for any $p\in[1,2^*_{s}(\alpha))$,
there holds
\begin{equation}  \label{eq1.6}
 \Big( \int_{ \R^{n} }  \frac{ |u(y)|^{ 2^*_{s}(\alpha)} }  {  |y|^{\alpha} } dy  \Big)^{ \frac{1}{  2^*_{s} (\alpha)  }}  \leq C ||u||_{{\dot{H}}^s(\R^{n})}^{\theta} ||u||^{1-\theta}_{  L^{p,\frac{n-2s}{2}p+pr}(\R^{n},|y|^{-pr}) },~~~~\forall u \in {\dot{H}}^s(\R^{n})
\end{equation}
where $\bar{\theta}=\max \{ \frac{2}{2^*_{s}(\alpha)}, \frac{2^*_{s}-1}{2^*_{s}(\alpha)}  \} >0$ and $r=\frac{\alpha}{ 2^*_{s}(\alpha) }$.
\end{proposition}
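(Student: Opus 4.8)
\emph{The plan is to} absorb the singular weight into the unknown and thereby reduce \eqref{eq1.6} to an \emph{unweighted} improved Sobolev inequality. Put $q:=2^*_s(\alpha)$ and $r:=\tfrac{\alpha}{2^*_s(\alpha)}$ (so $\alpha=qr$); since $s\in(0,1)$ we may replace $u$ by $|u|$, which does not increase $\|u\|_{\dot{H}^s(\R^n)}$ and leaves the other two quantities unchanged, so we assume $u\ge0$ and set $v:=|y|^{-r}u$. Then $\int_{\R^n}|u|^q|y|^{-\alpha}\,dy=\|v\|_{L^q(\R^n)}^q$ and, directly from the definition of the weighted Morrey norm, $\|u\|_{L^{p,\frac{n-2s}{2}p+pr}(\R^n,|y|^{-pr})}=\|v\|_{L^{p,\frac{n-2s}{2}p+pr}(\R^n)}$. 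The key arithmetic is that, with $\sigma:=s-r=\tfrac{n(2s-\alpha)}{2(n-\alpha)}$, one has $0<\sigma<\tfrac n2$, $\ \tfrac{2n}{n-2\sigma}=2^*_s(\alpha)=q$, and $\tfrac{n-2s}{2}p+pr=\tfrac{np}{q}$; thus $\dot{H}^\sigma(\R^n)\hookrightarrow L^q(\R^n)$ is the plain Sobolev embedding and $L^q(\R^n)\hookrightarrow L^{p,np/q}(\R^n)$ (for $p<q$) the plain Morrey embedding.

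\emph{Step 1 (a weighted fractional Hardy bound).} I would prove $\|v\|_{\dot{H}^\sigma(\R^n)}\le C\|u\|_{\dot{H}^s(\R^n)}$, i.e.\ that multiplication by $|y|^{-r}$ maps $\dot{H}^s(\R^n)$ boundedly into $\dot{H}^{s-r}(\R^n)$. Decompose $(-\Delta)^{(s-r)/2}(|y|^{-r}u)$ by a Leibniz/paraproduct splitting: the ``low--high'' piece is controlled by $\||y|^{-r}(-\Delta)^{(s-r)/2}u\|_{L^2}\le C\|(-\Delta)^{(s-r)/2}u\|_{\dot{H}^r}=C\|u\|_{\dot{H}^s}$ via the fractional Hardy inequality (admissible since $0<r<\tfrac n2$); the ``high--low'' piece by $\|u\,(-\Delta)^{(s-r)/2}|y|^{-r}\|_{L^2}$, where $(-\Delta)^{(s-r)/2}|y|^{-r}=c\,|y|^{-s}$, so the fractional Hardy inequality at level $s$ applies directly; and the remaining resonant term by standard bilinear/Bernstein estimates exploiting the gain $s-r>0$. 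The constraints $0<r<s<1$ and $2(s-r)<n$ keep every exponent in the admissible range.

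\emph{Step 2 (unweighted improved Sobolev--Morrey).} It then remains to establish that, for $0<\sigma<\tfrac n2$, $q=\tfrac{2n}{n-2\sigma}$, $p\in[1,q)$ and $\theta\in(\bar{\theta},1)$,
\[
\|w\|_{L^q(\R^n)}\le C\,\|w\|_{\dot{H}^\sigma(\R^n)}^{\theta}\,\|w\|_{L^{p,np/q}(\R^n)}^{1-\theta}\qquad(w\ge0),
\]
after which taking $w=v$ and inserting Step 1 yields \eqref{eq1.6}. For this I would use a Littlewood--Paley decomposition $w=\sum_j\Delta_j w$, $S_Nw=\sum_{j<N}\Delta_j w$, and the layer-cake identity $\|w\|_{L^q}^q=q\int_0^\infty t^{q-1}|\{w>t\}|\,dt$. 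Low frequencies are controlled pointwise by the Morrey norm, $\|S_Nw\|_{L^\infty}\le C\,2^{Nn/q}\|w\|_{L^{p,np/q}}$ (a Bernstein estimate against $\int_{B_\rho}|w|^p\le\|w\|_{L^{p,np/q}}^p\,\rho^{\,n-np/q}$), while the high-frequency remainder gains integrability through $\|w-S_Nw\|_{\dot{H}^{\sigma'}}\le C\,2^{-N(\sigma-\sigma')}\|w\|_{\dot{H}^\sigma}$ for $0<\sigma'<\sigma$, followed by Sobolev at level $\sigma'$ and Hölder back to exponent $q$. Choosing, for each $t$, the frequency $N=N(t)$ with $\|S_Nw\|_\infty\le t/2$ gives $\{w>t\}\subseteq\{w-S_Nw>t/2\}$ and, by Chebyshev, a mixed weak-type bound; feeding this together with the plain Sobolev bound $|\{w>t\}|\le Ct^{-q}\|w\|_{\dot{H}^\sigma}^q$ into the layer-cake integral and tracking the behaviour as $t\to0$ and $t\to\infty$ makes the integral converge, and only for $\theta$ in the stated range --- the two endpoints of integration producing exactly $\bar{\theta}=\max\{\tfrac{2}{q},\tfrac{2^*_s-1}{q}\}$. (All three quantities are homogeneous of degree one in $w$ and invariant under $w\mapsto\tau^{n/q}w(\tau\cdot)$, so the powers of $t$ balance automatically.)

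\emph{Main obstacle.} The crux is the incompatibility of the singular weight $|y|^{-\alpha}$ with the decomposition: Littlewood--Paley projections do not commute with multiplication by $|y|^{-r}$, so one cannot simply transplant the classical improved Sobolev inequality to $v=|y|^{-r}u$ by cutting frequencies of $u$. This is precisely what forces the route through Step 1, whose quantitative heart is the weighted fractional Hardy estimate $\||y|^{-r}u\|_{\dot{H}^{s-r}}\le C\|u\|_{\dot{H}^s}$ --- equivalently, the exactness of the numerology $\sigma=s-r$ --- and the control of the resonant paraproduct remainder near $y=0$, where $|y|^{-r}$ is most singular. A secondary technical point is that neither $u$ nor $v$ need belong to $L^2(\R^n)$, so every ``low/high frequency'' argument in Steps 1--2 must be performed in the \emph{homogeneous} spaces $\dot{H}^{\tau}$, where the low modes are harmless.
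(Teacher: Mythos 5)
Your route is genuinely different from the paper's. The paper writes $u=\ell_s g$ with $\|g\|_{L^2}=\|u\|_{\dot H^s}$ and applies the Sawyer--Wheeden two-weight inequality for Riesz potentials (Lemma \ref{lemma3.4}) with the $u$-dependent weight $V(y)=|u(y)|^{2^*_s(\alpha)-\tilde q}|y|^{-\alpha}$, verifying the testing condition by H\"older so that the constant $C_\sigma$ is controlled by the Morrey norm; the interpolation exponent $\theta=\tilde q/2^*_s(\alpha)$ and the threshold $\bar\theta$ come out of the admissible range of $\tilde q$ and of $\sigma=1/(2^*_s-\tilde q)>1$. You instead conjugate by the weight, setting $v=|y|^{-r}u$, and your numerology is exactly right: $\sigma=s-r=\tfrac{n(2s-\alpha)}{2(n-\alpha)}\in(0,s)$, $2^*_\sigma=\tfrac{2n}{n-2\sigma}=2^*_s(\alpha)=q$, and $\tfrac{n-2s}{2}p+pr=\tfrac{n-2\sigma}{2}p$, so your Step 2 is \emph{literally} Lemma \ref{lemma3.6} (Theorem 1 of \cite{GPAP}) at order $\sigma$ and need not be reproved by Littlewood--Paley; it even yields the larger range $\theta\in(\max\{\tfrac2q,\tfrac{q-1}q\},1)$, which contains the paper's range since $2^*_s>q$ --- your claim that the endpoint analysis ``produces exactly'' $\bar\theta=\max\{\tfrac2q,\tfrac{2^*_s-1}{q}\}$ is pattern-matching the paper's statement rather than what the unweighted inequality actually gives, but this is harmless. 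What your approach buys is a conceptually cleaner reduction (weighted inequality $=$ unweighted inequality $+$ one mapping property of $M_{|y|^{-r}}$), at the price of having to prove that mapping property; the paper's approach avoids it entirely by letting the two-weight machinery absorb the singularity.

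The one genuine soft spot is Step 1. The statement $\||y|^{-r}u\|_{\dot H^{s-r}(\R^n)}\le C\|u\|_{\dot H^s(\R^n)}$ is true, but the paraproduct sketch is not a proof: the ``resonant'' piece is exactly where such arguments require real work, and $|y|^{-r}$ lies in no global Lebesgue space, so the standard product estimates do not apply off the shelf. A clean way to close it: writing $u=(-\Delta)^{-s/2}f$, the claim is the $L^2$-boundedness of $T=(-\Delta)^{(s-r)/2}M_{|y|^{-r}}(-\Delta)^{-s/2}$; on the Fourier side $M_{|y|^{-r}}$ is convolution with $c_{n,r}|\xi|^{r-n}$, so $T$ is the integral operator with kernel $c_{n,r}\,|\xi|^{s-r}|\xi-\eta|^{-(n-r)}|\eta|^{-s}$. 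This is a Stein--Weiss kernel, homogeneous of degree $-n$, with exponents $a=r-s$, $b=s$, $\sigma_0=r$ satisfying $a<\tfrac n2$, $b<\tfrac n2$, $a+b=\sigma_0\in(0,n)$, hence $T$ is bounded on $L^2(\R^n)$. With Step 1 replaced by this (or an equivalent rigorous argument), your proof is complete and in fact slightly stronger than Proposition \ref{pro1.4} as stated.
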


\begin{corollary}\label{coro1.5}
Let $n\geq2$, $1<p<n$ and $0<\alpha<p$. Then there exists $C=C(n,p,\alpha)>0$ such that for any $\theta \in (\bar{\theta},1)$ and for any $m \in[1,p^*(\alpha))$,
there holds
\begin{equation}  \label{eq1.7}
 \Big( \int_{ \R^{n} }  \frac{ |u(y)|^{ p^*(\alpha)} }  {  |y|^{\alpha} } dy  \Big)^{ \frac{1}{  p^*(\alpha)  }}  \leq C ||u||_{{D}^{1,p}(\R^{n})}^{\theta} ||u||^{1-\theta}_{  L^{m,\frac{n-p}{p}m+mr}(\R^{n},|y|^{-mr}) },~~~~\forall u \in {D}^{1,p}(\R^{n})
\end{equation}
where $\bar{\theta}=\max \{ \frac{p}{p^*(\alpha)}, \frac{p^*-1}{p^*(\alpha)}  \} >0$ and $r=\frac{\alpha}{ p^*(\alpha) }$.
\end{corollary}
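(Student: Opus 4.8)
The plan is to derive the inequality \eqref{eq1.7} by transcribing the proof of Proposition~\ref{pro1.4} into the first-order framework, the translation being governed by replacing $2s$ with $p$, $\frac{n-2s}{2}$ with $\frac{n-p}{p}$, $2^{*}_{s}(\alpha)$ with $p^{*}(\alpha)$, $2^{*}_{s}$ with $p^{*}$, and $\dot{H}^{s}(\R^{n})$ with $D^{1,p}(\R^{n})$. The argument for Proposition~\ref{pro1.4} rests, in essence, on two structural facts --- the embedding chain \eqref{eq1.06} and the invariance of the Hardy--Sobolev quotient under the rescaling $u\mapsto u_{t}$ --- and both have exact analogues in the present setting, so no genuinely new analytic ingredient should be required.

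Accordingly, the first step is to establish, for $n\ge 2$, $1<p<n$, $0<\alpha<p$, $m\in[1,p^{*}(\alpha))$ and $r=\alpha/p^{*}(\alpha)$, the analogue of \eqref{eq1.06}:
\[
 D^{1,p}(\R^{n})\hookrightarrow L^{p^{*}(\alpha)}(\R^{n},|y|^{-\alpha})\hookrightarrow L^{m,\frac{n-p}{p}m+mr}(\R^{n},|y|^{-mr}).
\]
The first inclusion is precisely the Caffarelli--Kohn--Nirenberg (Hardy--Sobolev) inequality $\big(\int_{\R^{n}}|u(y)|^{p^{*}(\alpha)}|y|^{-\alpha}\,dy\big)^{1/p^{*}(\alpha)}\le C\|u\|_{D^{1,p}(\R^{n})}$, which is valid for $0\le\alpha\le p<n$. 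The second I would obtain exactly as the second inclusion of \eqref{eq1.06} is obtained in Section~3: since $r=\alpha/p^{*}(\alpha)$ one has the pointwise identity $|u|^{m}|y|^{-mr}=\big(|u|^{p^{*}(\alpha)}|y|^{-\alpha}\big)^{m/p^{*}(\alpha)}$, whence H\"older's inequality on an arbitrary ball $B(x_{0},\rho)$ with exponents $p^{*}(\alpha)/m$ and its conjugate $\frac{p^{*}(\alpha)}{p^{*}(\alpha)-m}$ gives
\[
 \int_{B(x_{0},\rho)}\frac{|u(y)|^{m}}{|y|^{mr}}\,dy\le C\,\rho^{\,n-\frac{nm}{p^{*}(\alpha)}}\Big(\int_{\R^{n}}\frac{|u(y)|^{p^{*}(\alpha)}}{|y|^{\alpha}}\,dy\Big)^{m/p^{*}(\alpha)},
\]
and a direct check shows $n-\frac{nm}{p^{*}(\alpha)}=n-\big(\frac{n-p}{p}m+mr\big)$, which is exactly the power of $\rho$ demanded by the weighted Morrey norm with exponent $\frac{n-p}{p}m+mr$. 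The same computation also shows that $u\mapsto u_{t}(x):=t^{(n-p)/p}u(tx)$ leaves $\|u\|_{D^{1,p}(\R^{n})}$, $\|u\|_{L^{p^{*}(\alpha)}(\R^{n},|y|^{-\alpha})}$ and $\|u\|_{L^{m,\frac{n-p}{p}m+mr}(\R^{n},|y|^{-mr})}$ simultaneously invariant, so \eqref{eq1.7} is automatically scale invariant for every admissible $\theta$, which is the consistency check corresponding to the two right-hand exponents adding up to $1$.

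With this chain in hand I would reproduce the interpolation step of the proof of Proposition~\ref{pro1.4}: split $u$, at a level governed by a parameter $\tau>0$, into a ``large'' part and a ``small'' part; control the large part in $L^{p^{*}(\alpha)}(\R^{n},|y|^{-\alpha})$ by $\|u\|_{D^{1,p}(\R^{n})}$ via the Hardy--Sobolev inequality and the small part by $\|u\|_{L^{m,\frac{n-p}{p}m+mr}(\R^{n},|y|^{-mr})}$ via the second embedding above; sum over dyadic values of $\tau$; and optimize in $\tau$. The threshold $\bar\theta=\max\{p/p^{*}(\alpha),(p^{*}-1)/p^{*}(\alpha)\}$ then emerges exactly as in Proposition~\ref{pro1.4}, as the value of $\theta$ below which the optimized geometric series converges: the first entry comes from the power $p$ carried by $\|u\|_{D^{1,p}(\R^{n})}^{p}=\int_{\R^{n}}|\nabla u|^{p}$ relative to the target exponent $p^{*}(\alpha)$, and the second from the endpoint of the second interpolation constraint, whose origin is identical to the one in the proof of Proposition~\ref{pro1.4}. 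I expect the bulk of the work to be routine bookkeeping, namely checking that every exponent and every power of $\rho$ in the Section~3 computation transforms correctly under the dictionary above. The one substantive point to watch, and the only place I would expect real care to be needed, is whether any step of that computation exploits the Hilbert-space structure of $\dot{H}^{s}$ (for instance through Plancherel's identity or frequency orthogonality) rather than merely the embedding \eqref{eq1.06}; if so, that step must be replaced by its $L^{p}$-counterpart --- the vector-valued Littlewood--Paley/square-function estimate and the Gagliardo--Nirenberg inequalities, both available for $1<p<n$, cover all the plausible cases --- after which the transcription goes through without obstruction.
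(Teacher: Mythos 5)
Your embedding chain $D^{1,p}(\R^{n})\hookrightarrow L^{p^{*}(\alpha)}(\R^{n},|y|^{-\alpha})\hookrightarrow L^{m,\frac{n-p}{p}m+mr}(\R^{n},|y|^{-mr})$ and the dilation-invariance check are correct, but the core of your plan rests on a mischaracterization of how Proposition \ref{pro1.4} is actually proved. There is no truncation at a level $\tau$, no dyadic summation, and no geometric series in that proof: the mechanism is the two-weight Sawyer--Wheeden inequality for Riesz potentials (Lemma \ref{lemma3.4}), applied with the $u$-dependent weight $V(y)={|u(y)|^{2^{*}_{s}(\alpha)-\tilde q}}{|y|^{-\alpha}}$ and $W\equiv1$; the Morrey norm enters only through the verification of the testing condition (\ref{eq3.3}) by H\"older's inequality, and the threshold $\bar\theta$ comes from the admissibility constraints $\tilde q>\tilde p$ and $\sigma=1/(2^{*}_{s}-\tilde q)>1$, not from convergence of an optimized series. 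The ``interpolation step'' you propose to reproduce therefore does not exist in the source argument, and your sketch of a substitute (split at level $\tau$, sum dyadically, optimize) is too vague to assess; in particular it is not shown that such a scheme produces the weighted ($|y|^{-\alpha}$) inequality with exactly the stated $\bar\theta=\max\{p/p^{*}(\alpha),(p^{*}-1)/p^{*}(\alpha)\}$.

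The one genuinely new ingredient the corollary requires is also absent from your proposal. In the fractional case the proof starts from $u=\ell_{s}g$ with $\|g\|_{L^{2}}=\|u\|_{\dot H^{s}(\R^{n})}$ via Plancherel; you rightly flag this as the step that does not transfer, but the correct replacement is neither Littlewood--Paley square functions nor Gagliardo--Nirenberg. It is the elementary pointwise domination
$$|u(x)|\le C\int_{\R^{n}}\frac{|\nabla u(y)|}{|x-y|^{n-1}}\,dy=C\,\ell_{1}(|\nabla u|)(x),$$
obtained from the Newtonian potential representation $u=\Delta^{-1}\Delta u$ for $n\ge3$ (logarithmic kernel for $n=2$), extended to all of $D^{1,p}(\R^{n})$ by density. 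With this in hand one applies Lemma \ref{lemma3.4} with $\tilde s=1$, $\tilde p=p$, $f=|\nabla u|$, $W\equiv1$, $V(y)={|u(y)|^{p^{*}(\alpha)-\tilde q}}{|y|^{-\alpha}}$ and $\sigma=1/(p^{*}-\tilde q)>1$, and the verification of (\ref{eq3.3}) is then word-for-word the H\"older computation from the proof of Proposition \ref{pro1.4}. Without this pointwise bound your transcription has no starting point, so the proposal as written has a genuine gap at its central step.
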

\begin{remark}
Proposition \ref{pro1.4} and Corollary \ref{coro1.5} are more general than Theorems 1-2 in G. Palatucci, A. Pisante in \cite{GPAP}; The detailed proof will be given in Section 3.
\end{remark}
Now, we give the outline of the proof for Theorem \ref{eq1.1}. We use the Mountain pass lemma to find critical points of $I(u)$ on $\dot{H}^{s}(\R^{n})$, which correspond to weak solutions for equation (\ref{eq1.1}). Since problem (\ref{eq1.1}) includes double critical exponents, we require the Mountain pass level $c<c^*$ for some suitable threshold value $c^*$. This is crucial in  ruling out the "vanishing" of the corresponding (PS) sequence. To this end, we introduce the minimization problems
\begin{equation}  \label{eq1.8}
   S_{\mu}(n,s,\gamma,\alpha)=\mathop {\inf }\limits_{u \in \dot{H}^{s}(\R^{n})) \setminus \{0\}  }   \frac{ \int_{\R^{n}} |(-\Delta)^{s/2}u|^2dx-{\gamma} \int_{\R^{n}} {\frac{u^2}{|x|^{2s}}}dx }
   {  {B_{\alpha}(u,u)}^{\frac{1}{  2^{\#}_{\mu}(\alpha)  }}  }
\end{equation}
and
\begin{equation}  \label{eq1.9}
   \Lambda(n,s,\gamma,\alpha)=  \mathop {\inf }\limits_{u \in \dot{H}^{s}(\R^{n}) \setminus \{0\}}   \frac{\int_{\R^{n}} |(-\Delta)^{s/2}u|^2dx-{\gamma} \int_{\R^{n}} {\frac{u^2}{|x|^{2s}}}dx}{\Big( \int_{\R^{n}} \frac{{|u|}^{ { 2^{*}_{s} }(\alpha)}}{|x|^{\alpha}}dx  \Big)^{\frac{2}{  2^{*}_{s}(\alpha)  }}}
\end{equation}
where $B_{\alpha}(\cdot,\cdot)$ was defined in (\ref{eq2.01}).
Using the minimizers of $S_{\mu}(n,s,\gamma,\alpha)$ and $\Lambda(n,s,\gamma,\alpha)$, we can prove the Mountain pass level $c<c^*$ where
$$
c^*:=\min \Big \{ \frac{2^{\#}_{\mu}(\alpha)-1}{2 \cdot 2^{\#}_{\mu}(\alpha)} {S_{\mu}(n,s,\gamma,\alpha)}^{\frac{{ 2^{\#}_{\mu} }(\alpha)}{ { 2^{\#}_{\mu} }(\alpha)-1 }}, \frac{2s-\beta}{2(n-\beta)} \Lambda(n,s,\gamma,\beta)^{\frac{n-\beta}{2s-\beta}} \Big \}.$$
Then, the Mountain pass lemma gives a $(PS)_c$ sequence $\{u_k\}_{k=1}^{+\infty}$ for $I(\cdot)$ at level $c>0$, i.e.
\begin{equation} \label{eq1.013}
\lim_{k \to +\infty}I(u_k)=c<c^*~~\mbox{and}~~ \lim_{k \to +\infty} I'(u_k)=0~~\mbox{strongly in}~~\dot{H}^{s}(\R^n)'.
\end{equation}
Clearly, $\{u_k\}$ is bounded so we may assume $u_k \rightharpoonup u$ in $\dot{H}^{s}(\R^n)$ for some $u \in \dot{H}^{s}(\R^n)$. However, it may occur that $u\equiv0$. Denote
$$d_1=\lim_{k \to +\infty}\int_{\R^n}{\frac{{|u_k|}^{ {2^{*}_{s}}(\beta)}}{|x|^{\beta}}}dx,~~~~d_2=\lim_{k \to +\infty}B_{\alpha}(u_k,u_k).$$
From $(\ref{eq1.8})$, $(\ref{eq1.9})$ and $(\ref{eq1.013})$, we have
\begin{equation} \label{eq1.014}
  d_1^{\frac{2}{  2^{*}_{s}(\beta) }}A_1 \leq d_2,~~~~d_2^{\frac{1}{  2^{\#}_{\mu}(\alpha) }}A_2\leq d_1
\end{equation}
where $A_1=\Lambda(n,s,\gamma,\beta)-  [\frac{2(n-\beta)}{2s-\beta} c]^{\frac{2^{*}_{s}(\beta)-2}{  2^{*}_{s}(\beta) }}$ and $A_2=S_{\mu}(n,s,\gamma,\alpha)-  [\frac{2 \cdot 2^{\#}_{\mu}(\alpha)}{2^{\#}_{\mu}(\alpha)-1} c]^{\frac{2^{\#}_{\mu}(\alpha)-1}{  2^{\#}_{\mu}(\alpha) }}$. Since $c<c^*$, we derive that $A_1>0$ and $A_2>0$. Thus $(\ref{eq1.014})$ implies that $d_1\geq{\varepsilon}_0>0$ and $d_2\geq{\varepsilon}_0>0$(If $d_1=0$ and $d_2=0$, then $c=0$, a contradiction), i.e.
$$\lim_{k \to +\infty}\int_{\R^n}{\frac{{|u_k|}^{ {2^{*}_{s}}(\beta)}}{|x|^{\beta}}}dx\geq{\varepsilon}_0>0,~~~~~~~~\lim_{k \to +\infty}B_{\alpha}(u_k,u_k)\geq{\varepsilon}_0>0. $$
So the embeddings $(\ref{eq1.06})$ and the improved Sobolev inequality (\ref{eq1.6}) imply that
$$   0<C \leq ||u_k||_{  L^{2,{n-2s}+2r}(\R^{n},|y|^{-2r}) } \leq C^{-1}~~~~~~ \mbox{for any}~~~~ k\geq K~~~~ \mbox{large} $$
where $r=\frac{\alpha}{ 2^*_{s}(\alpha) }$ and $C>0$ is a constant.
For any $k\geq K$ large, we may find ${\lambda}_k>0$ and $x_k \in \R^{n}$ such that
$$  {\lambda}_k^{-2s+2r} \int_{B_{{\lambda}_k}(x_k)} \frac{|u_k(y)|^2}{  |y|^{2r} }dy > ||u_k||^2_{  L^{2,{n-2s}+2r}(\R^{n},|y|^{-2r}) } -\frac{C}{2k} \geq C_1>0.$$
Let $v_k(x)={\lambda}_k^{ \frac{n-2s}{2} }u_k({\lambda}_kx)$, then we have ${v}_k \rightharpoonup v\not \equiv 0$ in ${\dot{H}}^s(\R^{n})$. In fact, we can prove that $\{{\tilde{x}}_k=\frac{x_k}{{\lambda}_k}\}$ is bounded and
\begin{equation}\label{eq1.015}
\int_{B_{1}({\tilde{x}}_k)} \frac{|v_k(x)|^2}{  |x|^{2r} }dx  \geq C_1>0.
\end{equation}
From (\ref{eq1.015}), we have $\int_{B_{1}({\tilde{x}}_k)} \frac{|v(x)|^2}{  |x|^{2r} }dx  \geq C_1>0$ since $r<s$.
Moreover, we can check that $\{{v}_k\}$ is a new $(PS)$ sequence for $I(\cdot)$ at the same energy level $c$, then $v\not \equiv 0$ solves (\ref{eq1.1}).\\

It remains to deal with the minimization problems (\ref{eq1.8})-(\ref{eq1.9}). To this end, we need some kind of compactness. When $\alpha=0$, we use the method introduced by R. Filippucci et al. in \cite{RFPP} or S. Dipierro et al. in \cite{SDLM} to prove the existence of minimizers for $S_{\mu}(n,s,\gamma,0)$. Next, we focus on the case of $\alpha>0$. Both N. Ghoussoub et al. in \cite{NGSS} and R. Filippucci et al. in \cite{RFPP} use truncation skills and a careful analysis of concentration to eliminate the "vanishing" of the corresponding minimizing sequence. Therefore, it would inevitably lead to tedious and complex calculations. In addition, the authors in \cite{NGSS} and \cite{CWJ} had to work in the extension space $X^{s}({\R}^{n+1}_+)$ to deal with the non-local operator $(-\Delta)^{s}$. \textbf{If $\alpha>0$, the embeddings (\ref{eq1.06}) and the inequality (\ref{eq1.6}) allow us to adopt a direct but easier way to prove the existence of minimizers for $S_{\mu}(n,s,\gamma,\alpha)$ and ${\Lambda}(n,s,\gamma,\alpha)$ in $\dot{H}^{s}(\R^{n})$. Moreover, (\ref{eq1.06}) and (\ref{eq1.6}) are very useful to rule out the "vanishing" of the corresponding $(PS)$ sequence}. As far as we know, the strategy we adopt is new.  Neither do we use truncation skills nor do we work in $X^s({\R}^{n+1}_+)$, consequently our strategy avoids tedious and complex calculations, and does enormously simplify the proof of the main results in \cite{NGSS} and \cite{CWJ}. To go further, Corollary \ref{coro1.5} and the corresponding embeddings can be applied to solve equation (\ref{eq1.2})
$$
 -{\Delta}_pu-{\kappa} {\frac{u^{p-1}}{|x|^{p}}}=u^{p^{*}-1}+\frac{u^{ {p^{*}(\alpha)-1}}}{|x|^{\alpha}} ~~~~\mbox{in}~~{\R}^n, u \geq 0, \ \ u \in {D}^{1,p}(\R^{n})
$$
where $n\geq 2$, $p \in (1,n)$, $\alpha \in (0,p)$, $p^{*}=\frac{np}{n-p}$, $p^{*}(\alpha)=\frac{p(n-\alpha)}{n-p}$, $0\leq \kappa < \bar{\kappa}=(\frac{n-p}{p})^p$. We also notice that (\ref{eq1.06}) and (\ref{eq1.6}) play the same role as concentration compactness principle does in \cite{NGSS} and \cite{CWJ}. For more information about the concentration compactness principle, please refer to \cite{PLLI} and \cite{PLLO}.

The rest of the paper is organized as follows: in Section 2, we give some preliminaries. In Section 3, we introduce the weighted Morrey space and establish improved Sobolev inequalities, i.e., we prove Proposition  \ref{pro1.4} and Corollary \ref{coro1.5}. In Section 4, we solve the minimization problems (\ref{eq1.8})-(\ref{eq1.9}). In Section 5, we prove Theorem \ref{th1.1}. \\

\textbf{Notation:} We use $\rightarrow$ and $\rightharpoonup$ to denote the strong and weak convergence in the corresponding spaces respectively. Write "Palais-Smale" as $(PS)$ in short. $\mathbb{N}=\{1,2,\cdots\}$ is the set of natural numbers. $\mathbb{R}$ and $\mathbb{C}$ denote the sets of real and complex numbers respectively. By saying a function is "measurable", we always mean that the function is "Lebesgue" measurable. "$\wedge$" denotes the Fourier transform and "$\vee$" denotes the inverse Fourier transform. Generic fixed and numerical constants will be denoted by $C$(with subscript in some case) and they will be allowed to vary within a single line or formula.

\section{Preliminaries}
\setcounter{equation}{0}
In this section, we give some preliminary results.

\begin{lemma}\label{lemma2.1} (Fractional Hardy inequality: Formula (2.1) in \cite{EHRS}) \\
Let $s\in (0,1)$ and $n>2s$. Then we have
\begin{equation}\label{eq2.1}
  {\gamma}_{H} \int_{\R^{n}} {\frac{u^2}{|x|^{2s}}}dx \leq \int_{\R^{n}} |(-\Delta)^{s/2}u|^2dx, ~~~~ \forall u \in {\dot{H}}^s(\R^{n})
\end{equation}
where $\gamma_{H}:=4^s\frac{\Gamma^2(\frac{n+2s}{4})}{\Gamma^2(\frac{n-2s}{4})}$ is the best constant in the above inequality on $\R^{n}$.  \\
\end{lemma}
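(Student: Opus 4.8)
The plan is to recover this classical inequality by a ground-state (Frank--Lieb--Seiringer-type) substitution, together with the explicit action of $(-\Delta)^s$ on power functions. First I would pass from the Fourier-side expression to the Gagliardo form: for $s\in(0,1)$ one has
\[
\int_{\R^n}|(-\Delta)^{s/2}u|^2\,dx=\frac{C_{n,s}}{2}\int_{\R^n}\int_{\R^n}\frac{|u(x)-u(y)|^2}{|x-y|^{n+2s}}\,dx\,dy ,
\]
with $C_{n,s}$ the usual normalization constant. Next I would identify the virtual ground state: from the formula $(-\Delta)^s|x|^{-a}=\lambda_{n,s}(a)\,|x|^{-a-2s}$ with $\lambda_{n,s}(a)=2^{2s}\,\Gamma(\tfrac{a+2s}{2})\Gamma(\tfrac{n-a}{2})\big/\big(\Gamma(\tfrac{a}{2})\Gamma(\tfrac{n-a-2s}{2})\big)$, the choice $a=\tfrac{n-2s}{2}$ gives $\lambda_{n,s}(a)=4^s\Gamma^2(\tfrac{n+2s}{4})/\Gamma^2(\tfrac{n-2s}{4})=\gamma_H$, so that $w(x):=|x|^{-(n-2s)/2}$ satisfies $(-\Delta)^s w=\gamma_H\,|x|^{-2s}w$ away from the origin.

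Then I would write $u=wv$ and prove the ground-state representation
\[
\frac{C_{n,s}}{2}\int_{\R^n}\int_{\R^n}\frac{|u(x)-u(y)|^2}{|x-y|^{n+2s}}\,dx\,dy=\gamma_H\int_{\R^n}\frac{u^2}{|x|^{2s}}\,dx+\frac{C_{n,s}}{2}\int_{\R^n}\int_{\R^n}\frac{|v(x)-v(y)|^2}{|x-y|^{n+2s}}\,w(x)\,w(y)\,dx\,dy ,
\]
first for $u\in C_0^\infty(\R^n\setminus\{0\})$, by expanding $|u(x)-u(y)|^2=|v(x)w(x)-v(y)w(y)|^2$, symmetrizing in $x\leftrightarrow y$, and using the equation for $w$ to recognise the cross term as $2\gamma_H\int u^2|x|^{-2s}$. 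Since $\gamma_H>0$ and the last double integral is nonnegative, \eqref{eq2.1} follows on $C_0^\infty(\R^n\setminus\{0\})$, and then on all of $\dot{H}^s(\R^n)$ by density (for $n>2s$ the origin has zero $\dot{H}^s$-capacity, so removing it changes nothing). For sharpness I would test with suitably truncated and regularized copies of $w$ near $0$ and $\infty$, let the cutoff parameters degenerate, and check that the remainder (the last term in the representation) becomes negligible compared with either side, so that the Rayleigh quotient tends to $\gamma_H$; equality is never attained because $w\notin\dot{H}^s(\R^n)$.

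I expect the main obstacle to be the rigorous justification of the ground-state representation: one must verify that $\int\!\!\int|v(x)-v(y)|^2 w(x)w(y)|x-y|^{-n-2s}\,dx\,dy$ converges both at the singularity and at infinity and that the integration against the singular weight $w$ producing the cross term is legitimate, since $w\notin\dot{H}^s(\R^n)$ and $(-\Delta)^s w$ is only locally integrable away from $0$. An approach that avoids the substitution altogether is the Herbst--Yafaev diagonalization: decompose $L^2(\R^n)$ into spherical harmonics, apply the Mellin transform in the radial variable to turn \eqref{eq2.1} into a one-parameter family of scalar multiplier inequalities, and evaluate the optimal constant in each channel by a Beta-function integral; the infimum is attained in the $\ell=0$ channel and equals $\gamma_H$. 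This trades the bookkeeping of the representation formula for a longer but entirely routine special-function computation.
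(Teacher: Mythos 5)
The paper gives no proof of this lemma; it is quoted verbatim as Formula (2.1) of the cited reference \cite{EHRS}, so there is no internal argument to compare against. Your outline is correct and is precisely the standard route behind that citation: the ground-state representation of Frank--Seiringer with virtual ground state $w(x)=|x|^{-(n-2s)/2}$ (whose eigenvalue $\lambda_{n,s}(\tfrac{n-2s}{2})$ indeed equals $\gamma_H$), extended from $C_0^\infty(\R^n\setminus\{0\})$ by density since points have zero $\dot H^s$-capacity when $n>2s$, with sharpness from truncations of $w$; the Herbst--Yafaev Mellin diagonalization you mention is the equally classical alternative. The only quibble is a bookkeeping one: with the prefactor $\tfrac{C_{n,s}}{2}$ kept outside the double integral, the symmetrized cross term contributes $\gamma_H\int u^2|x|^{-2s}\,dx$, not $2\gamma_H\int u^2|x|^{-2s}\,dx$.
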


\begin{lemma}\label{lemma2.2} (Fractional Hardy-Sobolev inequalities: Lemma 2.1 of \cite{NGSS}) \\
Let $s\in (0,1)$ and $0\leq \alpha \leq 2s<n.$ Then there exist positive constants $c$ and $C$ such that
\begin{equation}\label{eq2.2}
   \Big( \int_{\R^{n}} \frac{{|u|}^{ { 2^{*}_{s} }(\alpha)}}{|x|^{\alpha}}dx  \Big)^{\frac{2}{  2^{*}_{s}(\alpha)  }} \leq c \int_{\R^{n}} |(-\Delta)^{s/2}u|^2dx,~~~~\forall u \in {\dot{H}}^s(\R^{n}).
\end{equation}
Moreover, if $\gamma<\gamma_{H}=4^s\frac{\Gamma^2(\frac{n+2s}{4})}{\Gamma^2(\frac{n-2s}{4})}$, then
\begin{equation}\label{eq2.3}
  C \Big( \int_{\R^{n}} \frac{{|u|}^{ { 2^{*}_{s} }(\alpha)}}{|x|^{\alpha}}dx  \Big)^{\frac{2}{  2^{*}_{s}(\alpha)  }} \leq \int_{\R^{n}} |(-\Delta)^{s/2}u|^2dx-{\gamma} \int_{\R^{n}} {\frac{u^2}{|x|^{2s}}}dx,~~~~\forall u \in {\dot{H}}^s(\R^{n}).
\end{equation}  \\
\end{lemma}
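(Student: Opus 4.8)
The plan is to obtain (\ref{eq2.2}) by interpolating between its two endpoint cases --- the pure fractional Sobolev inequality (the case $\alpha=0$) and the fractional Hardy inequality (the case $\alpha=2s$, which is precisely Lemma \ref{lemma2.1}) --- and then to deduce (\ref{eq2.3}) from (\ref{eq2.2}) together with the coercivity of the quadratic form on its right-hand side, coercivity being exactly where the hypothesis $\gamma<\gamma_{H}$ enters.

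For (\ref{eq2.2}), first I would set $\theta=\frac{\alpha}{2s}\in[0,1]$ and record the pointwise identity
\[
\frac{|u(x)|^{2^{*}_{s}(\alpha)}}{|x|^{\alpha}}=\Big(\frac{u(x)^{2}}{|x|^{2s}}\Big)^{\theta}\big(|u(x)|^{2^{*}_{s}}\big)^{1-\theta},
\]
which holds because $|x|^{-\alpha}=(|x|^{-2s})^{\theta}$ and, by a one-line computation, $2\theta+(1-\theta)2^{*}_{s}=\frac{2(n-\alpha)}{n-2s}=2^{*}_{s}(\alpha)$. When $0<\alpha<2s$ (so $\theta\in(0,1)$), H\"older's inequality with conjugate exponents $\frac{1}{\theta}$ and $\frac{1}{1-\theta}$ gives
\[
\int_{\R^{n}}\frac{|u|^{2^{*}_{s}(\alpha)}}{|x|^{\alpha}}\,dx\le\Big(\int_{\R^{n}}\frac{u^{2}}{|x|^{2s}}\,dx\Big)^{\theta}\Big(\int_{\R^{n}}|u|^{2^{*}_{s}}\,dx\Big)^{1-\theta}.
\]
Into the first factor I would insert Lemma \ref{lemma2.1}, $\int_{\R^{n}}\frac{u^{2}}{|x|^{2s}}\,dx\le\gamma_{H}^{-1}\|(-\Delta)^{s/2}u\|_{L^{2}}^{2}$, and into the second the fractional Sobolev inequality $\int_{\R^{n}}|u|^{2^{*}_{s}}\,dx\le C_{S}\|(-\Delta)^{s/2}u\|_{L^{2}}^{2^{*}_{s}}$ for some $C_{S}>0$ (see e.g.\ \cite{ENEV}). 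Using $2\theta+(1-\theta)2^{*}_{s}=2^{*}_{s}(\alpha)$ once more to collect the powers, this produces
\[
\int_{\R^{n}}\frac{|u|^{2^{*}_{s}(\alpha)}}{|x|^{\alpha}}\,dx\le\gamma_{H}^{-\theta}\,C_{S}^{\,1-\theta}\,\|(-\Delta)^{s/2}u\|_{L^{2}}^{2^{*}_{s}(\alpha)},
\]
and raising both sides to the power $\frac{2}{2^{*}_{s}(\alpha)}$ gives (\ref{eq2.2}) with an explicit $c$. The two boundary values of $\theta$ are disposed of separately and trivially: $\theta=0$ is the fractional Sobolev inequality itself, and $\theta=1$ (then $2^{*}_{s}(\alpha)=2$) is Lemma \ref{lemma2.1}.

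For (\ref{eq2.3}), assume $\gamma<\gamma_{H}$ and set $\sigma:=\min\{1,\,1-\gamma/\gamma_{H}\}>0$. Using Lemma \ref{lemma2.1} again I would verify the coercivity bound
\[
\|(-\Delta)^{s/2}u\|_{L^{2}}^{2}-\gamma\int_{\R^{n}}\frac{u^{2}}{|x|^{2s}}\,dx\ \ge\ \sigma\,\|(-\Delta)^{s/2}u\|_{L^{2}}^{2},
\]
distinguishing the case $\gamma\ge 0$, where $\int_{\R^{n}}\frac{u^{2}}{|x|^{2s}}\,dx\le\gamma_{H}^{-1}\|(-\Delta)^{s/2}u\|_{L^{2}}^{2}$ forces the estimate with $\sigma=1-\gamma/\gamma_{H}$, from the case $\gamma<0$, where the left-hand side is trivially at least $\|(-\Delta)^{s/2}u\|_{L^{2}}^{2}$. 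Combining this with (\ref{eq2.2}) then yields
\[
\Big(\int_{\R^{n}}\frac{|u|^{2^{*}_{s}(\alpha)}}{|x|^{\alpha}}\,dx\Big)^{\frac{2}{2^{*}_{s}(\alpha)}}\le c\,\|(-\Delta)^{s/2}u\|_{L^{2}}^{2}\le\frac{c}{\sigma}\Big(\|(-\Delta)^{s/2}u\|_{L^{2}}^{2}-\gamma\int_{\R^{n}}\frac{u^{2}}{|x|^{2s}}\,dx\Big),
\]
i.e.\ (\ref{eq2.3}) holds with $C=\sigma/c$.

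I do not expect a genuine obstacle here: the lemma is essentially bookkeeping, and the only points requiring care are the exponent identity $2\theta+(1-\theta)2^{*}_{s}=2^{*}_{s}(\alpha)$, the separate treatment of the degenerate values $\theta\in\{0,1\}$, and keeping every constant ($\gamma_{H}^{-\theta}$, $C_{S}^{1-\theta}$, $\sigma$) strictly positive --- which is precisely the role of the hypothesis $\gamma<\gamma_{H}$. Pinning down the \emph{sharp} value of $c$ in (\ref{eq2.2}) would require more (an extension or symmetrization argument), but only the existence of positive constants $c$ and $C$ is asserted.
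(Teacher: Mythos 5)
Your argument is correct, and it is worth noting that the paper itself offers no proof of this lemma at all: it is simply quoted from Lemma 2.1 of \cite{NGSS}. What you have written is the standard self-contained derivation of the (non-sharp) fractional Hardy--Sobolev inequality: the exponent identity $2\theta+(1-\theta)2^{*}_{s}=2^{*}_{s}(\alpha)$ with $\theta=\alpha/2s$ checks out, the H\"older step with exponents $1/\theta$ and $1/(1-\theta)$ is legitimate since both factors are integrable for $u\in\dot H^{s}(\R^{n})$ (by Lemma \ref{lemma2.1} and the fractional Sobolev inequality), and the two degenerate values $\theta=0,1$ are correctly reduced to the Sobolev and Hardy endpoints. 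Your deduction of (\ref{eq2.3}) via the coercivity constant $\sigma=\min\{1,1-\gamma/\gamma_{H}\}$, splitting $\gamma\ge0$ from $\gamma<0$, is exactly the same observation the paper records afterwards in (\ref{eq2.4}), so nothing is hidden there either. In short: the paper buys the lemma by citation, while your interpolation argument proves it from the two inequalities already available in the text, at the cost only of not identifying the best constants --- which, as you note, the statement does not require.
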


From Lemma \ref{lemma2.1}, the following inequality holds for all $\gamma<\gamma_{H}$ and any $u \in {\dot{H}}^s(\R^{n})$,
\begin{equation}\label{eq2.4}
  (1-\frac{\gamma_{+}}{\gamma_{H}})\int_{\R^{n}} |(-\Delta)^{s/2}u|^2dx \leq ||u||^2 \leq (1+\frac{\gamma_{-}}{\gamma_{H}}) \int_{\R^{n}} |(-\Delta)^{s/2}u|^2dx
\end{equation}
where $||u||={\Big(\int_{\R^{n}} |(-\Delta)^{s/2}u|^2dx-{\gamma} \int_{\R^{n}} {\frac{u^2}{|x|^{2s}}}dx \Big)}^{\frac{1}{2}}$ and $\gamma_{\pm}=\max\{\pm\gamma,0\}$. We define an \textbf{equivalent norm} on ${\dot{H}}^s(\R^{n})$ by $||\cdot||$ and denote the inner product of $u,v \in {\dot{H}}^s(\R^{n})$ by
$$ {\langle u, v \rangle}=\int_{\R^{n}} (-\Delta)^{\frac{s}{2}}u(-\Delta)^{\frac{s}{2}}vdx-{\gamma} \int_{\R^{n}} {\frac{uv}{|x|^{2s}}}dx.$$ \\

\begin{lemma}  \label{lemma2.3}
 Let $s \in (0,1)$ and $0<r<s<\frac{n}{2}$. If $\{u_k\}$ is a bounded sequence in ${\dot{H}}^s(\R^{n})$ and $u_k \rightharpoonup   u ~~\mbox{in}~~{\dot{H}}^s(\R^{n})$, then
$$\frac{u_k}{ {|x|}^{r}  }    \rightarrow   \frac{u}{ {|x|}^{r}  } ~~\mbox{in}~~L^2_{loc}(\R^{n}).$$
\end{lemma}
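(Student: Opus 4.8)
The plan is to prove Lemma \ref{lemma2.3} by combining compact fractional Sobolev embeddings on bounded domains with a Hardy-type estimate that controls the singularity of the weight $|x|^{-r}$ at the origin. First I would reduce the problem to a ball: since $L^2_{loc}$ convergence is a local statement, it suffices to show $\frac{u_k}{|x|^r} \to \frac{u}{|x|^r}$ in $L^2(B_R(0))$ for every $R>0$. Writing $w_k = u_k - u$, we have $w_k \rightharpoonup 0$ in $\dot{H}^s(\R^n)$ and $\{w_k\}$ bounded there, so the goal becomes $\int_{B_R} \frac{|w_k|^2}{|x|^{2r}}\,dx \to 0$.

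Next I would split the integral over $B_R$ into a piece near the origin, $B_\rho(0)$, and the annulus $B_R \setminus B_\rho(0)$. On the annulus the weight $|x|^{-2r}$ is bounded above by $\rho^{-2r}$, and by the compactness of the embedding $\dot{H}^s(\R^n) \hookrightarrow L^2_{loc}(\R^n)$ (more precisely $H^s(B_R) \hookrightarrow\hookrightarrow L^2(B_R)$, using that $\{w_k\}$ is bounded in $H^s(B_R)$) we get $w_k \to 0$ strongly in $L^2(B_R)$, hence $\int_{B_R \setminus B_\rho} \frac{|w_k|^2}{|x|^{2r}}\,dx \le \rho^{-2r} \|w_k\|_{L^2(B_R)}^2 \to 0$ for each fixed $\rho$. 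For the piece on $B_\rho(0)$ I would invoke the fractional Hardy inequality (Lemma \ref{lemma2.1}), or rather the fractional Hardy–Sobolev inequality (Lemma \ref{lemma2.2} with exponent $\alpha = 2r < 2s$): since $0 < 2r < 2s$, Hölder's inequality on $B_\rho$ against the bound
$$\int_{B_\rho}\frac{|w_k|^2}{|x|^{2r}}\,dx \le \Big(\int_{B_\rho}\frac{|w_k|^{2^*_s(2r)}}{|x|^{2r}}\,dx\Big)^{2/2^*_s(2r)} |B_\rho|^{1 - 2/2^*_s(2r)} \le C\,\|w_k\|_{\dot H^s}^2\,|B_\rho|^{1-2/2^*_s(2r)},$$
which is bounded by $C\,\rho^{n(1 - 2/2^*_s(2r))}$ since $\|w_k\|_{\dot H^s}$ is uniformly bounded. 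Because $2^*_s(2r) > 2$, the exponent $1 - 2/2^*_s(2r)$ is positive, so this tail is uniformly small as $\rho \to 0$.

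Finally I would assemble the standard $\varepsilon/2$ argument: given $\varepsilon > 0$, first choose $\rho$ small so that the $B_\rho$ contribution is below $\varepsilon/2$ uniformly in $k$, then with $\rho$ fixed choose $k$ large so that the annular contribution is below $\varepsilon/2$. Hence $\int_{B_R}\frac{|w_k|^2}{|x|^{2r}}\,dx \to 0$, which is the claim. The main obstacle — though it is a mild one — is making sure the fractional-order spaces and embeddings are invoked correctly: one must know that a bounded sequence in $\dot{H}^s(\R^n)$ restricts to a bounded sequence in $H^s(B_R)$ and that the embedding $H^s(B_R) \hookrightarrow\hookrightarrow L^2(B_R)$ is compact (which holds since $B_R$ is a bounded Lipschitz domain and $s > 0$), and that the Hardy–Sobolev inequality of Lemma \ref{lemma2.2} genuinely applies with the weight exponent $2r \in (0, 2s)$. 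Everything else is the routine truncation-near-the-singularity device.
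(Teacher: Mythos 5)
Your argument is correct, but it takes a different route from the paper's. The paper does not split the domain at all: it writes, for any compact $\Omega$,
\begin{equation*}
\int_{\Omega}\frac{|u_k-u|^2}{|x|^{2r}}\,dx
\;\leq\;
\Big(\int_{\Omega}\frac{|u_k-u|^2}{|x|^{2s}}\,dx\Big)^{\frac{r}{s}}
\Big(\int_{\Omega}|u_k-u|^2\,dx\Big)^{1-\frac{r}{s}},
\end{equation*}
i.e.\ a single H\"older interpolation with parameter $r/s$ between the Hardy weight $|x|^{-2s}$ and the unweighted integrand; the first factor is uniformly bounded by the fractional Hardy inequality (Lemma \ref{lemma2.1}) and the second tends to zero by the compact local embedding, so the whole thing converges in one line. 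Your version is the classical truncation-near-the-singularity device: bounded weight plus compactness on the annulus, Hardy--Sobolev (Lemma \ref{lemma2.2} with $\alpha=2r\in(0,2s)$) plus a smallness-in-$\rho$ estimate near the origin, then an $\varepsilon/2$ assembly. Both are valid; the paper's interpolation is shorter and avoids the two-parameter limit, while yours is more robust in that it only uses the Hardy--Sobolev inequality near the origin and would survive in settings where a global Hardy inequality at the exact exponent $2s$ is unavailable.

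One small correction to your H\"older step on $B_\rho$: applying H\"older with respect to the weighted measure $|x|^{-2r}dx$ (exponents $2^*_s(2r)/2$ and its conjugate), the second factor is not the Lebesgue volume $|B_\rho|^{1-2/2^*_s(2r)}$ but rather $\bigl(\int_{B_\rho}|x|^{-2r}dx\bigr)^{1-2/2^*_s(2r)}=C\rho^{(n-2r)(1-2/2^*_s(2r))}$. Since $2r<2s<n$ this is still finite and still tends to $0$ as $\rho\to0$, so the conclusion is unaffected; only the stated power of $\rho$ changes.
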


\begin{proof}
Since $u_k \rightharpoonup   u ~~\mbox{in}~~{\dot{H}}^s(\R^{n})$, by Corollary 7.2 in \cite{ENEV}, we have
\begin{align*}
   u_k \rightarrow   u ~~\mbox{in}~~L^q_{loc}(\R^{n})(1 \leq q< 2^*_s)~~~~\mbox{and}~~~~
  u_k \rightarrow   u ~~\mbox{a.e.~~~~on}~~\R^{n}.
\end{align*}
From Lemma \ref{lemma2.1}, we have
$\int_{\R^{n}}\frac{|u_k|^2}{|x|^{2s}}dx \leq C_{s,n} \int_{\R^{n}} |(-\Delta)^{\frac{s}{2} }u_k|^2 dx \leq \tilde{C}.$
For any compact set $\Omega \subset \R^{n}$, using H\"older's inequality, we have
\begin{align*}
\int_{\Omega}\frac{|u_k-u|^2}{|x|^{2r}}dx &
 \leq \Big(\int_{\Omega}\frac{|u_k-u|^2}{|x|^{2s}}dx \Big)^{\frac{r}{s}}  \Big(\int_{\Omega}|u_k-u|^2dx \Big)^{(1-\frac{r}{s})}  \\
& \leq C \Big(\int_{\Omega}|u_k-u|^2dx \Big )^{(1-\frac{r}{s})} \rightarrow 0 .
\end{align*}
\end{proof}

\begin{proposition} (Hardy-Littlewood-Sobolev inequality, Theorem 4.3 in \cite{ELMA})  \label{prop2.4}
 Let $t, r >1$ and $\mu \in (0,n)$ with $\frac{1}{t}+\frac{\mu}{n}+\frac{1}{r}=2$, $f\in L^t(\R^{n})$ and $h\in L^r(\R^{n})$. There exists a sharp constant $C(t,n,\mu,r)$, independent of $f$, $h$ such that
 \begin{equation}\label{eq2.5}
 \Big | {\int_{\R^{n}} \int_{\R^{n}} \frac{f(x)h(y) }{  {|x-y|}^{\mu}  } dx dy} \Big | \leq C(t,n,\mu,r) {||f||}_{L^t(\R^{n})}{||h||}_{L^r(\R^{n})}.
 \end{equation}
If $t=r=\frac{2n}{2n-\mu}$, then $ C(t,n,\mu,r)=C(n,\mu)={\pi}^{\frac{\mu}{2}} \frac{ \Gamma(\frac{n}{2}-\frac{\mu}{2})
 }{ \Gamma(n-\frac{\mu}{2})   } \Big \{   {\frac{ \Gamma(\frac{n}{2})
 }{ \Gamma(n)}  \Big \}^{-1+\frac{\mu}{n}}
 }.$
 In this case there is equality in (\ref{eq2.5}) if and only if $f\equiv(\mbox{constant})h$ and $ h(x)=A({\varepsilon}^2+{|x-a|}^2)^{ \frac{-(2n-\mu)}{2} }$ for some $A\in\mathbb{C}$, $0\not =\varepsilon \in\mathbb{R}$ and $a\in\mathbb{R}^n$. \\
\end{proposition}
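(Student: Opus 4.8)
The plan is to establish the inequality \textbf{(\ref{eq2.5})} with a non-optimal constant by a real-variable argument, and then, separately, to identify the sharp constant and the equality cases in the diagonal case $t=r=\frac{2n}{2n-\mu}$ by a symmetrization argument.

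For the first part, by duality \textbf{(\ref{eq2.5})} is equivalent to the convolution estimate
\begin{equation*}
\big\| \, |x|^{-\mu} * h \, \big\|_{L^{t'}(\R^{n})} \le C \, \|h\|_{L^{r}(\R^{n})}, \qquad \frac{1}{t'}=\frac{1}{r}+\frac{\mu}{n}-1,
\end{equation*}
where $t'$ is the conjugate exponent of $t$. The key point is that the kernel $|x|^{-\mu}$ belongs to the weak Lebesgue space $L^{n/\mu,\infty}(\R^{n})$, since the level set $\{x:|x|^{-\mu}>\lambda\}$ is a ball of volume $\omega_{n}\lambda^{-n/\mu}$. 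Thus the estimate is a special case of the weak Young inequality $\|g*h\|_{q}\le C\,\|g\|_{L^{p,\infty}}\|h\|_{L^{r}}$ with $p=n/\mu$, $q=t'$ and $\frac1q+1=\frac1p+\frac1r$; all three exponents lie strictly between $1$ and $\infty$ because $0<\mu<n$ and $t,r>1$. I would prove this weak Young inequality in the usual way: split $g=g\chi_{\{|g|\le\lambda\}}+g\chi_{\{|g|>\lambda\}}$, bound the convolution with the first (bounded) piece by Young's inequality and the convolution with the (integrable) tail by Young's inequality into $L^{\infty}$, optimize over $\lambda$ to obtain a restricted weak-type $(r,q)$ bound, and finally upgrade to the strong-type bound by the Marcinkiewicz interpolation theorem.

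Alternatively, and more in the spirit of Hardy and Littlewood, one can estimate the double integral in \textbf{(\ref{eq2.5})} directly: decompose $\R^{n}\times\R^{n}$ into the regions where $|x-y|$ is small, comparable, or large relative to a scale chosen from the level sets of $f$ and $h$, and sum the resulting contributions. Either route yields \textbf{(\ref{eq2.5})} with some finite constant $C(t,n,\mu,r)$, which is all that the applications in this paper actually use.

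The sharp value $C(n,\mu)$ and the characterization of equality are substantially deeper; this is Lieb's theorem, and the strategy would be: (i) apply the Riesz rearrangement inequality to reduce to nonnegative, radially symmetric and nonincreasing $f,h$; (ii) pass to the sphere $S^{n}$ via stereographic projection, under which $|x-y|^{-\mu}$ picks up an explicit conformal factor and the extremal problem becomes invariant under the conformal group of $S^{n}$; (iii) run Lieb's competing-symmetries argument to show that the only maximizers are the conformal images of the constant function, which pull back to the family $h(x)=A(\varepsilon^{2}+|x-a|^{2})^{-(2n-\mu)/2}$; and (iv) compute $C(n,\mu)$ by evaluating the functional on this explicit family, which produces the stated Gamma-function formula. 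The main obstacle is step (iii): controlling the symmetrization flow and excluding maximizing sequences that run off to the boundary of the (noncompact) conformal group requires the delicate compactness and rigidity analysis at the heart of Lieb's proof. As this is classical, in the paper we simply invoke \cite{ELMA}.
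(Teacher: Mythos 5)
The paper offers no proof of this proposition: it is quoted verbatim as Theorem 4.3 of \cite{ELMA} and used as a black box. Your outline --- the non-sharp bound via the weak Young inequality for the kernel $|x|^{-\mu}\in L^{n/\mu,\infty}(\R^n)$ together with Marcinkiewicz interpolation, and the sharp diagonal constant with its equality cases via Riesz rearrangement, stereographic projection and the conformal/competing-symmetries analysis of Lieb --- is an accurate account of the standard proof, and your decision to ultimately invoke \cite{ELMA} matches exactly what the paper does.
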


Let $s \in(0,1)$, $0 \leq \alpha<2s<n$, $\mu \in (0,n)$. $\forall u \in \dot{H}^{s}(\R^{n})$, take $t=r=\frac{2n}{2n-\mu}>1$ and
$f(\cdot)=h(\cdot)=\frac{      {|u(\cdot)|}^{{ 2^{\#}_{\mu} }(\alpha)}      }{  {|\cdot|}^{  {\delta_{\mu} (\alpha)}            }  }$
in (\ref{eq2.5}). Then Lemma \ref{lemma2.2} implies that $f, h\in L^{\frac{2n}{2n-\mu}}(\R^{n})$ and for the $B_{\alpha}(\cdot,\cdot)$ introduced in (\ref{eq2.01}), we have
\begin{equation} \label{eq2.6}
  B_{\alpha}(u,u) \leq C(n,\mu)  \Big( \int_{\R^{n}} \frac{{|u|}^{ { 2^{*}_{s} }(\alpha)}}{|x|^{\alpha}}dx  \Big)^{ \frac{2n-\mu}{n} }
 \leq C  ||u||_{{\dot{H}}^s(\R^{n})} ^{  2 \cdot 2^{\#}_{\mu} (\alpha) },~~~~~~~~\forall u \in \dot{H}^{s}(\R^{n}).
\end{equation}  \\

\begin{lemma} (A variant of Brezis-Lieb lemma)  \label{lemma2.5}\\
Let $r>1$, $q\in[1,r]$ and $\delta \in[0,nq/r)$. Assume $\{ w_k\}$ is a bounded sequence in $L^{r}(\mathbb{R}^n,|x|^{-{\delta r}/q})$ and $ w_k\rightarrow w$ a.e. on $\R^{n}$. Then,
\begin{align*}
& \mathop {\lim }\limits_{k  \to \infty}  \int_{\R^{n}}
{\Big| \frac{{|w_k|}^q}{|x|^{\delta}}-\frac{{|w_k-w|}^q}{|x|^{\delta}}
-\frac{{|w|}^q}{|x|^{\delta}}\Big|}^{\frac{r}{q}}=0,\\
& \mathop {\lim }\limits_{k  \to \infty}  \int_{\R^{n}}
{\Big|\frac{{|w_k|}^{q-1}w_k}{|x|^{\delta}}
-\frac{{|w_k-w|}^{q-1}(w_k-w)}{|x|^{\delta}}
-\frac{{|w|}^{q-1}w}{|x|^{\delta}}\Big|}^{\frac{r}{q}}=0.
\end{align*}
\end{lemma}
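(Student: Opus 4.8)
The plan is to prove this variant of the Brezis--Lieb lemma by reducing it to the standard Brezis--Lieb lemma applied to a rescaled sequence, exploiting the fact that multiplication by $|x|^{-\delta/q}$ turns the weighted $L^r$ space into an unweighted one. Set $v_k(x)=|x|^{-\delta/q}w_k(x)$ and $v(x)=|x|^{-\delta/q}w(x)$. Then $\{v_k\}$ is bounded in $L^{r/q\cdot q}=L^{r}$? — more precisely, since $\int_{\R^n}|w_k|^r|x|^{-\delta r/q}\,dx$ is bounded, the functions $g_k:=|w_k|^q|x|^{-\delta}$ satisfy $\int_{\R^n}|g_k|^{r/q}\,dx=\int_{\R^n}|w_k|^r|x|^{-\delta r/q}\,dx\le C$, so $\{g_k\}$ is bounded in $L^{r/q}(\R^n)$ with $r/q\ge 1$. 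Moreover $g_k\to g:=|w|^q|x|^{-\delta}$ a.e. on $\R^n$ (for a.e. $x$, $|x|^{-\delta}$ is a fixed finite constant and $w_k(x)\to w(x)$). The first identity is then exactly the classical Brezis--Lieb lemma (in the form: if $g_k$ is bounded in $L^p$, $p\ge1$, and $g_k\to g$ a.e., then $\|g_k\|_p^p-\|g_k-g\|_p^p\to\|g\|_p^p$, and in fact $\|g_k-(g_k-g)-g\|_p\to 0$) applied with $p=r/q$ — but one must be careful: $g_k-g$ is not of the form $|w_k-w|^q|x|^{-\delta}$, so the pointwise algebraic step still has to be done by hand.

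Concretely, for the first limit I would argue directly: fix $\ep>0$. By the elementary inequality $\big||a|^q-|a-b|^q\big|\le \ep|a-b|^q+C_\ep|b|^q$ for all $a,b\in\R$ (valid for $q\ge1$, a standard consequence of convexity/Young), set $a=w_k(x)$, $b=w(x)$ and divide by $|x|^\delta$ to get
\[
\Big|\frac{|w_k|^q}{|x|^\delta}-\frac{|w_k-w|^q}{|x|^\delta}-\frac{|w|^q}{|x|^\delta}\Big|\le \ep\,\frac{|w_k-w|^q}{|x|^\delta}+(C_\ep+1)\,\frac{|w|^q}{|x|^\delta}.
\]
Raise to the power $r/q\ge1$, define
\[
\Phi_{k,\ep}(x)=\Big(\Big|\tfrac{|w_k|^q}{|x|^\delta}-\tfrac{|w_k-w|^q}{|x|^\delta}-\tfrac{|w|^q}{|x|^\delta}\Big|-\ep\,\tfrac{|w_k-w|^q}{|x|^\delta}\Big)_+^{r/q},
\]
which satisfies $\Phi_{k,\ep}\to 0$ a.e. and $0\le\Phi_{k,\ep}\le C(C_\ep+1)^{r/q}\frac{|w|^r}{|x|^{\delta r/q}}\in L^1(\R^n)$; by dominated convergence $\int_{\R^n}\Phi_{k,\ep}\to0$. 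Since $\big|\cdot\big|^{r/q}\le \big(A+B\big)^{r/q}\le C_{r/q}(A^{r/q}+B^{r/q})$ with $A=\Phi_{k,\ep}^{q/r}$, $B=\ep|w_k-w|^q|x|^{-\delta}$, we get
\[
\int_{\R^n}\Big|\tfrac{|w_k|^q}{|x|^\delta}-\tfrac{|w_k-w|^q}{|x|^\delta}-\tfrac{|w|^q}{|x|^\delta}\Big|^{r/q}\le C_{r/q}\int_{\R^n}\Phi_{k,\ep}+C_{r/q}\ep^{r/q}\int_{\R^n}\tfrac{|w_k-w|^r}{|x|^{\delta r/q}}.
\]
The last integral is bounded uniformly in $k$ (since $\{w_k\}$ and hence $\{w_k-w\}$ is bounded in $L^r(\R^n,|x|^{-\delta r/q})$ — $w\in L^r(\R^n,|x|^{-\delta r/q})$ by Fatou). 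Letting $k\to\infty$ and then $\ep\to0$ gives the first claim. The second identity follows by the identical scheme, replacing $|a|^q$ by $|a|^{q-1}a$ and using the companion elementary inequality $\big||a|^{q-1}a-|a-b|^{q-1}(a-b)\big|\le\ep|a-b|^q+C_\ep|b|^q$.

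The only genuine point requiring care — the main obstacle — is the hypothesis $\delta\in[0,nq/r)$, which is needed to ensure the dominating function $|w|^r|x|^{-\delta r/q}$ is actually integrable near the origin for the \emph{relevant} class of $w$; in our application $w$ will be a weak limit in $\dot H^s(\R^n)$ of functions bounded in $L^r(\R^n,|x|^{-\delta r/q})$, so $\int_{\R^n}|w|^r|x|^{-\delta r/q}<\infty$ by Fatou's lemma, and the constraint $\delta r/q<n$ is exactly what makes the weight locally integrable so no extra pathology at $x=0$ occurs. I would state this integrability of $w$ explicitly at the start of the proof. Everything else is the standard Brezis--Lieb splitting argument with the fixed weight $|x|^{-\delta}$ carried along passively inside the integrand.
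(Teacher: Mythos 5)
Your proof is correct and follows essentially the same route as the paper: the elementary inequality $\bigl||a|^q-|a-b|^q\bigr|\le\ep|a-b|^q+C_\ep|b|^q$ (the paper uses the equivalent form with $a=\frac{w_k-w}{|x|^{\delta/q}}$, $b=\frac{w}{|x|^{\delta/q}}$), followed by the standard Brezis--Lieb truncation $\Phi_{k,\ep}$ and dominated convergence, with $w\in L^r(\R^n,|x|^{-\delta r/q})$ supplied by Fatou. The only difference is that you write out in full the concluding step that the paper delegates to Lemma 2.3 of the cited reference.
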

\begin{proof}
For the case of $\delta=0$, one can refer to Lemma 2.3 in \cite{GSIN}; We focus on the case of $\delta>0$. Fix $\varepsilon>0$ small, there exists $C(\varepsilon)>0$ such that for all $a,b\in\mathbb{R}$ we have
\begin{align*}
{\Big|{{|a+b|}^q}-{{|a|}^q}\Big|} \leq \varepsilon {|a|}^q+C(\varepsilon){|b|}^q,~~~~
{\Big|{{|a+b|}^{q-1}(a+b)}-{{|a|}^{q-1}a}\Big|} \leq \varepsilon {|a|}^q+C(\varepsilon){|b|}^q.
\end{align*}
Using the inequality $(a+b)^p\leq 2^{p-1}(a^p+b^p)$ for $a,b\geq0$ and $p\geq 1$, we obtain
\begin{align} \label{eq2.8}
{\Big|{{|a+b|}^q}-{{|a|}^q}\Big|}^{\frac{r}{q}} \leq  \Big(\varepsilon {|a|}^q+C(\varepsilon){|b|}^q \Big)^{\frac{r}{q}}
\leq  \tilde{\varepsilon} {|a|}^r+\tilde{C}(\varepsilon){|b|}^r
\end{align}
and
\begin{align} \label{eq2.9}
{\Big|{{|a+b|}^{q-1}(a+b)}-{{|a|}^{q-1}a}\Big|}^{\frac{r}{q}} \leq  \Big(\varepsilon {|a|}^q+C(\varepsilon){|b|}^q \Big)^{\frac{r}{q}}
\leq  \tilde{\varepsilon} {|a|}^r+\tilde{C}(\varepsilon){|b|}^r
\end{align}
where $\tilde{\varepsilon}=2^{\frac{r}{q}-1}\varepsilon^{\frac{r}{q}}$ and $\tilde{C}(\varepsilon)= 2^{\frac{r}{q}-1} C(\varepsilon)^{\frac{r}{q}}$. Taking $a=\frac{w_k-w}{|x|^{{\delta}/{q}}}$, $b=\frac{w}{|x|^{{\delta}/{q}}}$ in (\ref{eq2.8}) and (\ref{eq2.9}) respectively. The rest is similar to the proof of Lemma 2.3 in \cite{GSIN}, we omit the details.

\end{proof}

\begin{lemma} (Weak Young inequality, Section 4.3 in \cite{ELMA}) \label{lemma2.6}
Let $n \in \mathbb{N}$, $\mu \in (0,n)$, $\hat{p},\hat{r}>1 $ and $\frac{1}{\hat{p}}+\frac{\mu}{n}=1+\frac{1}{\hat{r}}$. If $ v\in L^{\hat{p}}(\R^{n})$, then $ I_{\mu}*v \in L^{\hat{r}}(\R^{n})$ and
\begin{align} \label{eq2.15}
 \Big(\int_{\R^{n}}{ |{I_{\mu}*v}|}^{\hat{r}} \Big)^{\frac{1}{\hat{r}}} \leq C(n,\mu, \hat{p}) \Big(\int_{\R^{n}}{ |v|}^{\hat{p}} \Big)^{\frac{1}{\hat{p}}}
\end{align}
where $I_{\mu}(x)=|x|^{-\mu}$. In particular, we can set $\hat{r}={\frac{n{\hat{p}}}{n-(n-\mu){\hat{p}}}}$ for $\hat{p}\in(1, \frac{n}{n-\mu})$.   \\
\end{lemma}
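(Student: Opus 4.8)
The plan is to obtain Lemma \ref{lemma2.6} from the Marcinkiewicz real interpolation theorem, by treating $v \mapsto I_{\mu}*v$ as a sublinear operator and exploiting that $I_{\mu}$ itself sits in a weak Lebesgue space. The starting point is the observation that $I_{\mu}(x)=|x|^{-\mu}$ belongs to $L^{n/\mu,\infty}(\R^{n})$: since $|\{x\in\R^{n}: |x|^{-\mu}>t\}|=|\{|x|<t^{-1/\mu}\}|=\omega_{n}\,t^{-n/\mu}$ (with $\omega_{n}$ the volume of the unit ball), the quasi-norm $\|I_{\mu}\|_{L^{n/\mu,\infty}}=\sup_{t>0}\,t\,|\{|I_{\mu}|>t\}|^{\mu/n}=\omega_{n}^{\mu/n}$ is a finite constant depending only on $n$ and $\mu$. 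Writing $q:=n/\mu$, the hypothesis $\frac{1}{\hat p}+\frac{\mu}{n}=1+\frac{1}{\hat r}$ becomes $\frac{1}{\hat p}+\frac{1}{q}=1+\frac{1}{\hat r}$, which is exactly the scaling relation in the classical Young-type inequality for weak $L^{q}$, and the admissible range is $\hat p\in(1,q')=(1,\frac{n}{n-\mu})$, matching the ``in particular'' clause.

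First I would prove the weak-type endpoint estimate: for every $p\in(1,q')$, with $r$ determined by $\frac1p+\frac1q=1+\frac1r$ (so that $1<r<\infty$ and $r>p$), one has
\[
\|\,f*I_{\mu}\,\|_{L^{r,\infty}(\R^{n})}\le C(n,\mu,p)\,\|f\|_{L^{p}(\R^{n})},\qquad \forall f\in L^{p}(\R^{n}).
\]
The argument is the standard truncation-at-height device. Normalize $\|I_{\mu}\|_{L^{q,\infty}}=1$, fix $s>0$, and split $I_{\mu}=g_{\le}^{h}+g_{>}^{h}$ at a level $h=h(s,\|f\|_{p})$ to be chosen, where $g_{\le}^{h}=I_{\mu}\mathbf{1}_{\{I_{\mu}\le h\}}$ and $g_{>}^{h}=I_{\mu}\mathbf{1}_{\{I_{\mu}>h\}}$. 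Using $|\{I_{\mu}>t\}|\le t^{-q}$ one checks $g_{>}^{h}\in L^{1}$ with $\|g_{>}^{h}\|_{1}\le\frac{q}{q-1}h^{1-q}$, and $g_{\le}^{h}\in L^{p'}$ with $\|g_{\le}^{h}\|_{p'}\le C h^{1-q/p'}$ (here $p'>q$, since the scaling relation forces $\frac1q>\frac1{p'}$). By the ordinary Young inequalities $L^{1}*L^{p}\subset L^{p}$ and $L^{p'}*L^{p}\subset L^{\infty}$ one gets $\|f*g_{>}^{h}\|_{p}\le\frac{q}{q-1}h^{1-q}\|f\|_{p}$ and $\|f*g_{\le}^{h}\|_{\infty}\le C h^{1-q/p'}\|f\|_{p}$; in particular $f*I_{\mu}=f*g_{\le}^{h}+f*g_{>}^{h}$ is well defined a.e.\ as the sum of an $L^{\infty}$ and an $L^{p}$ function. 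Choosing $h$ so that $C h^{1-q/p'}\|f\|_{p}=s$ makes $\{|f*g_{\le}^{h}|>s\}$ empty, whence $|\{|f*I_{\mu}|>2s\}|\le|\{|f*g_{>}^{h}|>s\}|\le s^{-p}\|f*g_{>}^{h}\|_{p}^{p}$; substituting the value of $h$ and simplifying the powers of $s$ and $\|f\|_{p}$ (using $\frac1r=\frac1p+\frac1q-1$, which makes the exponents collapse so that $p-\frac{(1-q)p^{2}}{p+q-pq}=\frac{pq}{p+q-pq}=r$) yields precisely $|\{|f*I_{\mu}|>2s\}|\le C(\|f\|_{p}/s)^{r}$, i.e.\ the claimed weak-type bound; undoing the normalization of $\|I_{\mu}\|_{L^{q,\infty}}$ only multiplies the constant by a power of $\omega_{n}^{\mu/n}$.

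Then I would upgrade this to the strong-type inequality by Marcinkiewicz interpolation. Since $\hat p$ lies in the open interval $(1,q')$, I can pick $p_{0},p_{1}$ with $1<p_{0}<\hat p<p_{1}<q'$ and let $r_{0},r_{1}$ be the corresponding exponents; the step above gives that $f\mapsto f*I_{\mu}$ is simultaneously of weak types $(p_{0},r_{0})$ and $(p_{1},r_{1})$, with $p_{i}<r_{i}$ and $r_{0}\neq r_{1}$ (as $r$ is strictly monotone in $p$). Marcinkiewicz's theorem then yields strong type $(p_{\theta},r_{\theta})$ for the intermediate parameter $\theta$, and because the relation $\frac1p+\frac1q=1+\frac1r$ is affine in $(\frac1p,\frac1r)$ it is preserved under convex combination, so $(p_{\theta},r_{\theta})=(\hat p,\hat r)$; tracking the norms through the splitting and the interpolation produces the constant $C(n,\mu,\hat p)$. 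Equivalently, one may invoke O'Neil's convolution inequality $\|f*I_{\mu}\|_{L^{\hat r,\hat p}}\le C\|f\|_{L^{\hat p}}\|I_{\mu}\|_{L^{q,\infty}}$ together with the embedding $L^{\hat r,\hat p}\hookrightarrow L^{\hat r}$ (valid since $\hat p<\hat r$), or simply quote Lieb--Loss \cite{ELMA}. I expect the main obstacle to be the bookkeeping in the weak-type step --- choosing the truncation level and checking that the exponents collapse to $r$ --- rather than the interpolation, which is routine once the two endpoints are in hand.
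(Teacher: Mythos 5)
Your proof is correct: the layer-cake computation showing $I_{\mu}\in L^{n/\mu,\infty}$, the truncation of the kernel at height $h$ with the bounds $\|g_{>}^{h}\|_{1}\le\frac{q}{q-1}h^{1-q}$ and $\|g_{\le}^{h}\|_{p'}\le Ch^{1-q/p'}$ (legitimate because the scaling relation forces $p'>q$), the resulting weak-type $(p,r)$ bound with the exponents collapsing to $r=\frac{pq}{p+q-pq}$, and the off-diagonal Marcinkiewicz interpolation (admissible since $p_i<r_i$ and $r_0\neq r_1$, with the affine relation preserved under convex combinations) all check out, and the constant indeed depends only on $n,\mu,\hat p$. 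Note, however, that the paper does not prove this lemma at all: it is quoted directly from Section 4.3 of \cite{ELMA}, so your argument is a genuinely different, self-contained route. It is the standard textbook proof for convolution with an arbitrary weak-$L^{q}$ kernel and has the merit of not relying on any deeper input. Within this paper's framework there is also a two-line derivation for the specific kernel $|x|^{-\mu}$: by duality, $\|I_{\mu}*v\|_{L^{\hat r}}=\sup\{\int_{\R^{n}}(I_{\mu}*v)h : \|h\|_{L^{\hat r'}}\le 1\}$, and the Hardy--Littlewood--Sobolev inequality of Proposition \ref{prop2.4} applies to $\int\int \frac{v(x)h(y)}{|x-y|^{\mu}}\,dx\,dy$ with exponents $t=\hat p$, $r=\hat r'$, since $\frac{1}{\hat p}+\frac{\mu}{n}+\frac{1}{\hat r'}=2$ is exactly the hypothesis $\frac{1}{\hat p}+\frac{\mu}{n}=1+\frac{1}{\hat r}$; this is essentially what the citation to \cite{ELMA} buys, at the cost of invoking HLS, whereas your interpolation argument proves the statement from scratch and in greater generality.
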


\begin{lemma} (Brezis-Lieb type lemma, Lemma 2.4 in \cite{VMJV}) \label{lemma2.7}
Let $n \in \mathbb{N}$, $\mu \in (0,n)$, $\frac{2n-\mu}{2n}\leq p <\infty$ and $\{u_k\}_{k \in \mathbb{N}}$ be a bounded sequence in $L^{\frac{2np}{2n-\mu}}(\R^{n})$. If $u_k \rightarrow u$ a.e. on $\mathbb{R}^n$ as $k\to \infty$, then
\begin{align} \label{eq2.17}
& \mathop {\lim }\limits_{k  \to \infty}  \int_{\R^{n}} \big [\big(I_{\mu}*{|u_k|}^{p}\big){|u_k|}^{p} -\big(I_{\mu}*{|u_k-u|}^{p}\big){|u_k-u|}^{p} \big]= \int_{\R^{n}} \big(I_{\mu}*{|u|}^{p}\big){|u|}^{p}.
\end{align}  \\
\end{lemma}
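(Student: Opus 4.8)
The plan is to prove the Brezis--Lieb type identity (\ref{eq2.17}) for the Choquard/Riesz-potential nonlinearity by following the standard Brezis--Lieb strategy, using the Hardy--Littlewood--Sobolev inequality (Proposition \ref{prop2.4}) as the basic integrability tool and a variant of the Brezis--Lieb lemma (Lemma \ref{lemma2.5} with $\delta=0$, $r=\frac{2np}{2n-\mu}$, $q=p$) to control the ``lower-order'' remainder terms. First I would set $v_k=u_k-u$, so that $u_k=v_k+u$ with $v_k\to 0$ a.e. and $\{v_k\}$ bounded in $L^{\frac{2np}{2n-\mu}}(\R^n)$. Expanding, one writes
\begin{align*}
\big(I_\mu*|u_k|^p\big)|u_k|^p-\big(I_\mu*|v_k|^p\big)|v_k|^p
&=\big(I_\mu*(|u_k|^p-|v_k|^p)\big)|u_k|^p
+\big(I_\mu*|v_k|^p\big)\big(|u_k|^p-|v_k|^p\big).
\end{align*}
The goal is to show that, after integrating, the first term tends to $\int_{\R^n}\big(I_\mu*|u|^p\big)|u|^p$ plus cross terms that vanish, and the second term tends to the cross terms that cancel, so that the net limit is exactly $\int_{\R^n}\big(I_\mu*|u|^p\big)|u|^p$.

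The key steps, in order, would be: (1) record that by Lemma \ref{lemma2.5} with $\delta=0$ the sequence $g_k:=|u_k|^p-|v_k|^p-|u|^p$ satisfies $\|g_k\|_{L^{\frac{2n}{2n-\mu}}(\R^n)}\to 0$; hence $|u_k|^p-|v_k|^p\to|u|^p$ strongly in $L^{\frac{2n}{2n-\mu}}(\R^n)$. (2) Since $|u_k|^p$ is bounded in $L^{\frac{2n}{2n-\mu}}(\R^n)$ and $|u_k|^p\rightharpoonup|u|^p$ weakly there (by a.e.\ convergence plus boundedness, e.g.\ Brezis--Lieb or a standard weak-convergence argument), and since the map $h\mapsto I_\mu*h$ is a bounded linear operator from $L^{\frac{2n}{2n-\mu}}(\R^n)$ to $L^{\frac{2n}{\mu}}(\R^n)$ by the weak Young/HLS inequality (Lemma \ref{lemma2.6} or Proposition \ref{prop2.4}), we get $I_\mu*(|u_k|^p-|v_k|^p)\to I_\mu*|u|^p$ strongly in $L^{\frac{2n}{\mu}}(\R^n)$. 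Pairing the strong convergence $I_\mu*(|u_k|^p-|v_k|^p)\to I_\mu*|u|^p$ in $L^{\frac{2n}{\mu}}$ against the weak convergence $|u_k|^p\rightharpoonup|u|^p$ in $L^{\frac{2n}{2n-\mu}}$ (these are conjugate exponents) yields
\[
\int_{\R^n}\big(I_\mu*(|u_k|^p-|v_k|^p)\big)|u_k|^p\;\longrightarrow\;\int_{\R^n}\big(I_\mu*|u|^p\big)|u|^p.
\]
(3) For the second term, $I_\mu*|v_k|^p$ is bounded in $L^{\frac{2n}{\mu}}(\R^n)$ (HLS/weak Young), while $|u_k|^p-|v_k|^p\to|u|^p$ strongly in $L^{\frac{2n}{2n-\mu}}(\R^n)$ by step (1); but we also need that $I_\mu*|v_k|^p\rightharpoonup 0$ weakly in $L^{\frac{2n}{\mu}}(\R^n)$, which follows because $|v_k|^p\rightharpoonup 0$ weakly in $L^{\frac{2n}{2n-\mu}}(\R^n)$ (boundedness plus $v_k\to0$ a.e.) and $I_\mu*$ is bounded linear, hence weak-to-weak continuous. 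Pairing the weak convergence $I_\mu*|v_k|^p\rightharpoonup0$ in $L^{\frac{2n}{\mu}}$ against the strong convergence $|u_k|^p-|v_k|^p\to|u|^p$ in $L^{\frac{2n}{2n-\mu}}$ gives
\[
\int_{\R^n}\big(I_\mu*|v_k|^p\big)\big(|u_k|^p-|v_k|^p\big)\;\longrightarrow\;0.
\]
Adding the two limits proves (\ref{eq2.17}).

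The main obstacle I expect is step (2)/(3): justifying rigorously that $|u_k|^p\rightharpoonup|u|^p$ and $|v_k|^p\rightharpoonup 0$ weakly in $L^{\frac{2n}{2n-\mu}}(\R^n)$, since a.e.\ convergence plus norm boundedness gives weak convergence only after one identifies the weak limit — here one uses that the a.e.\ limit of $|u_k|^p$ is $|u|^p$, together with boundedness, via the standard fact (a consequence of the Brezis--Lieb lemma, or of Vitali/Egorov on bounded sets) that a bounded sequence in a reflexive $L^q$ converging a.e.\ converges weakly to its pointwise limit. One must also be careful that the two remainder integrals are genuinely finite at each fixed $k$, which is exactly guaranteed by Proposition \ref{prop2.4} with $t=r=\frac{2n}{2n-\mu}$ applied to $f=h=|u_k|^p$ and to the mixed pairs. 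Once the weak/strong pairing structure is in place, the rest is the routine Brezis--Lieb bookkeeping, essentially identical to the $\delta=0$ case treated in \cite{GSIN} and in \cite{VMJV}; the only subtlety specific to the Choquard setting is tracking that the pair $\big(\frac{2n}{2n-\mu},\frac{2n}{\mu}\big)$ of HLS exponents is conjugate, which is what makes the convolution-and-integrate duality close.
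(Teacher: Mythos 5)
Your argument is correct, and it is essentially the proof of the result the paper is quoting: the paper gives no proof of Lemma \ref{lemma2.7} at all, citing Lemma 2.4 of \cite{VMJV}, whose argument is exactly your strong/weak pairing across the conjugate HLS exponents $\tfrac{2n}{2n-\mu}$ and $\tfrac{2n}{\mu}$ after a Brezis--Lieb splitting (there with the symmetric decomposition in terms of $h_k:=|u_k|^p-|v_k|^p$, namely $\int(I_\mu*h_k)h_k+2\int(I_\mu*h_k)|v_k|^p$, which is equivalent to your asymmetric one). One small caveat: in step (1) you invoke Lemma \ref{lemma2.5} with $q=p$, which requires $p\ge 1$, while the statement allows $\tfrac{2n-\mu}{2n}\le p<1$; for that range one instead uses the elementary subadditivity bound $\bigl||a+b|^{p}-|a|^{p}\bigr|\le|b|^{p}$ for $0<p\le1$ together with dominated convergence to get the same strong convergence of $|u_k|^p-|v_k|^p$ to $|u|^p$ in $L^{\frac{2n}{2n-\mu}}(\R^n)$ (in this paper the lemma is only applied with $p=2^{\#}_{\mu}(\alpha)>1$, so nothing downstream is affected).
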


\begin{lemma}\label{lemma2.8}
Let $s \in(0,1)$, $0\leq\alpha<2s<n$ and $\mu \in (0,n)$. If $\{u_k\}_{k \in \mathbb{N}}$ is a bounded sequence in $\dot{H}^{s}(\R^{n})$ and $u_k \rightharpoonup u$ in $\dot{H}^{s}(\R^{n})$, then we have
\begin{align*}
\mathop {\lim }\limits_{k  \to \infty} B_{\alpha}(u_k,u_k)=\mathop {\lim }\limits_{k  \to \infty} B_{\alpha}(u_k-u,u_k-u)+B_{\alpha}(u,u)
\end{align*}
where $B_{\alpha}(\cdot,\cdot)$ was defined in (\ref{eq2.01}).
\end{lemma}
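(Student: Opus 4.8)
The plan is to expand the double integral $B_{\alpha}(u_k,u_k)$ over the splitting $u_k = (u_k-u) + u$ and show that all the "cross terms" vanish in the limit. Write $v_k = u_k - u$, so $v_k \rightharpoonup 0$ in $\dot H^s(\R^n)$ and $v_k \to 0$ a.e. on $\R^n$ (by Corollary 7.2 in \cite{ENEV}). The natural framework is the Hardy--Littlewood--Sobolev inequality (Proposition \ref{prop2.4}) applied with $t=r=\frac{2n}{2n-\mu}$: it realizes $B_{\alpha}$ as a bounded bilinear form on $L^{\frac{2n}{2n-\mu}}(\R^n)$ acting on the functions $g_k := \frac{|u_k|^{2^{\#}_\mu(\alpha)}}{|x|^{\delta_\mu(\alpha)}}$. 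By \eqref{eq2.6} these $g_k$ are bounded in $L^{\frac{2n}{2n-\mu}}(\R^n)$. First I would invoke Lemma \ref{lemma2.5} (the weighted Brezis--Lieb lemma) with the parameters $q = 2^{\#}_\mu(\alpha)$, $r = q \cdot \frac{2n}{2n-\mu} = 2^{*}_s(\alpha)$ (note $q \le r$ since $\frac{2n}{2n-\mu}>1$) and $\delta = \delta_\mu(\alpha)$; one checks $\delta_\mu(\alpha) = (1-\frac{\mu}{2n})\alpha < \frac{nq}{r} = (1-\frac{\mu}{2n})n$ because $\alpha < n$, so the hypothesis is met. This yields
$$
\Big\| \frac{|u_k|^{2^{\#}_\mu(\alpha)}}{|x|^{\delta_\mu(\alpha)}} - \frac{|v_k|^{2^{\#}_\mu(\alpha)}}{|x|^{\delta_\mu(\alpha)}} - \frac{|u|^{2^{\#}_\mu(\alpha)}}{|x|^{\delta_\mu(\alpha)}} \Big\|_{L^{\frac{2n}{2n-\mu}}(\R^n)} \longrightarrow 0,
$$
i.e. $g_k = h_k + g + o(1)$ in $L^{\frac{2n}{2n-\mu}}(\R^n)$, where $h_k := \frac{|v_k|^{2^{\#}_\mu(\alpha)}}{|x|^{\delta_\mu(\alpha)}}$.

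Next, denoting by $B(f,h) = \int\int \frac{f(x)h(y)}{|x-y|^\mu}\,dx\,dy$ the HLS bilinear form, so that $B_{\alpha}(u_k,u_k) = B(g_k,g_k)$ and $B_{\alpha}(v_k,v_k) = B(h_k,h_k)$, $B_{\alpha}(u,u) = B(g,g)$, I would expand
$$
B(g_k,g_k) = B(h_k + g + o(1),\, h_k + g + o(1)) = B(h_k,h_k) + B(g,g) + 2B(h_k,g) + (\text{error}),
$$
where the error terms all involve at least one factor of the $L^{\frac{2n}{2n-\mu}}$-small remainder paired against an $L^{\frac{2n}{2n-\mu}}$-bounded function, hence tend to $0$ by the boundedness of $B$ (Proposition \ref{prop2.4}). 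It remains only to prove $B(h_k,g)\to 0$. Since $g \in L^{\frac{2n}{2n-\mu}}(\R^n)$ is fixed, the linear functional $\phi \mapsto B(\phi,g)$ is continuous on $L^{\frac{2n}{2n-\mu}}(\R^n)$; by the weak Young inequality (Lemma \ref{lemma2.6}) one has $I_\mu * g \in L^{\frac{2n}{\mu}}(\R^n)$ — indeed with $\hat p = \frac{2n}{2n-\mu}$ one gets $\hat r = \frac{2n}{\mu}$, the Hölder conjugate of $\frac{2n}{2n-\mu}$ — so $B(h_k,g) = \int_{\R^n} h_k \,(I_\mu * g)$. Because $h_k$ is bounded in $L^{\frac{2n}{2n-\mu}}(\R^n)$ and $h_k \to 0$ a.e., standard weak-convergence (a bounded sequence in a reflexive $L^p$ converging a.e. converges weakly, cf. the Brezis--Lieb circle of ideas) gives $h_k \rightharpoonup 0$ in $L^{\frac{2n}{2n-\mu}}(\R^n)$, whence $\int h_k\,(I_\mu*g) \to 0$ against the fixed test function $I_\mu*g \in L^{\frac{2n}{\mu}}(\R^n)$. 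Combining, $B(g_k,g_k) = B(h_k,h_k) + B(g,g) + o(1)$, which is exactly the claimed decomposition after taking $\lim_{k\to\infty}$.

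I expect the main (only) real obstacle to be the bookkeeping that checks the parameter constraints of Lemma \ref{lemma2.5} — making sure $q \le r$, that $\delta_\mu(\alpha)$ stays in the admissible range $[0,nq/r)$, and that the exponent $r = 2^*_s(\alpha)$ is correctly identified so that the conclusion lands in $L^{\frac{2n}{2n-\mu}}$ — together with verifying that every leftover cross term pairs a norm-small factor with a norm-bounded one so that continuity of the HLS form kills it. The weak-convergence step $h_k \rightharpoonup 0$ in $L^{\frac{2n}{2n-\mu}}$ is routine given boundedness plus a.e. convergence. One should also note that the a.e. convergence $v_k \to 0$ used throughout follows from $u_k \to u$ a.e., which in turn follows from weak convergence in $\dot H^s$ via the local compact embedding (Corollary 7.2 in \cite{ENEV}), already used in the proof of Lemma \ref{lemma2.3}.
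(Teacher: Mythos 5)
Your argument is correct, but it is not the route the paper takes. The paper's proof is a one-step reduction: setting $w_k:=u_k/|x|^{\alpha/2^*_s(\alpha)}$ and $p=2^{\#}_{\mu}(\alpha)$, one has $B_{\alpha}(u_k,u_k)=\int_{\R^{n}}(I_{\mu}*|w_k|^{p})|w_k|^{p}$ and $B_{\alpha}(u_k-u,u_k-u)=\int_{\R^{n}}(I_{\mu}*|w_k-w|^{p})|w_k-w|^{p}$, the sequence $\{w_k\}$ is bounded in $L^{2np/(2n-\mu)}=L^{2^*_s(\alpha)}(\R^n)$ and converges a.e., so the nonlocal Brezis--Lieb lemma of Moroz and Van Schaftingen (Lemma \ref{lemma2.7}) gives the splitting immediately. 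You instead reconstruct, in the weighted setting, essentially the proof of that cited lemma: Lemma \ref{lemma2.5} yields $g_k=h_k+g+o(1)$ in $L^{2n/(2n-\mu)}$, the Hardy--Littlewood--Sobolev bound disposes of every cross term containing the norm-small remainder, and the surviving term $B(h_k,g)$ vanishes because $h_k$ is bounded in $L^{2n/(2n-\mu)}$ and tends to $0$ a.e.\ (hence weakly), while $I_{\mu}*g$ lies in the dual space $L^{2n/\mu}$ by the weak Young inequality. Your parameter checks for Lemma \ref{lemma2.5} ($q=2^{\#}_{\mu}(\alpha)\le r=2^*_s(\alpha)$, $\delta_{\mu}(\alpha)<nq/r$, $\delta r/q=\alpha$) are all accurate. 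The paper's route buys brevity, delegating all the analysis to a citation after a change of unknown that absorbs the weight; yours buys self-containedness and makes explicit where each hypothesis (a.e.\ convergence, HLS continuity, duality) actually enters, at the cost of re-proving the quoted result.
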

\begin{proof}
For $s \in(0,1)$, $0\leq\alpha<2s<n$ and $\mu \in (0,n)$, we can check that $\frac{2n-\mu}{2n}<1<2^{\#}_{\mu} (\alpha)$. Therefore, taking $p=2^{\#}_{\mu} (\alpha)$ in Lemma \ref{lemma2.7}, we have $\frac{2np}{2n-\mu}=2^*_{s}(\alpha)$. Since $ u_k \in \dot{H}^{s}(\R^{n})$ and $u_k \rightharpoonup u$ in $\dot{H}^{s}(\R^{n})$, the embedding ${\dot{H}}^s(\R^{n}) \hookrightarrow  {L}^{2^*_{s}(\alpha)}(\R^{n},|x|^{-\alpha})$ in Lemma \ref{lemma2.2} implies that
$$ \frac{u_k}{ |x|^{\frac{\alpha}{2^*_{s}(\alpha)}} },~~~~~~\frac{u}{ |x|^{\frac{\alpha}{2^*_{s}(\alpha)}} } \in {L}^{2^*_{s}(\alpha)}(\R^{n}),$$
$$\frac{u_k}{ |x|^{\frac{\alpha}{2^*_{s}(\alpha)}} } \rightarrow \frac{u}{ |x|^{\frac{\alpha}{2^*_{s}(\alpha)}} } ~~~~\mbox{a.e.~~on}~~~~\mathbb{R}^n.$$
Consequently, Lemma \ref{lemma2.7} gives the desired equality.
\end{proof}

\begin{lemma}   \label{lemma2.9}
Let $s \in(0,1)$, $0 \leq \alpha<2s<n$, $\mu \in (0,n)$ and $\{u_k\}_{k \in \mathbb{N}}$ be a bounded sequence in ${L}^{2^*_{s}(\alpha)}(\R^{n},|x|^{-\alpha})$. If $u_k \rightarrow u$ a.e. on $\mathbb{R}^n$ as $k\to \infty$, then for any $\phi \in {L}^{2^*_{s}(\alpha)}(\R^{n},|x|^{-\alpha})$ we have
\begin{align} \label{eq2.22}
\mathop {\lim }\limits_{k  \to \infty}  \int_{\R^{n}}
\big [   I_{\mu}* F_{\alpha}(\cdot,u_k)  \big](x)f_{\alpha}(x,u_k)\phi(x)dx
= \int_{\R^{n}}
\big [   I_{\mu}* F_{\alpha}(\cdot,u)  \big](x)f_{\alpha}(x,u)\phi(x)dx
\end{align}
where $F_{\alpha}$ and $f_{\alpha}$ were introduced in (\ref{eq1.1}).
\end{lemma}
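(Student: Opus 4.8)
The goal is a Brezis–Lieb type splitting for the nonlocal Choquard nonlinearity, tested against a fixed $\phi$. The natural strategy is to expand $F_\alpha(\cdot,u_k)$ and $f_\alpha(\cdot,u_k)$ using the a.e. convergence $u_k\to u$, and then pass to the limit termwise, controlling each piece by the weak Young inequality (Lemma \ref{lemma2.6}) together with the Hardy–Littlewood–Sobolev inequality (Proposition \ref{prop2.4}) and the embedding $\dot H^s(\R^n)\hookrightarrow L^{2^*_s(\alpha)}(\R^n,|x|^{-\alpha})$. First I would record the exponents: writing $q=2^\#_\mu(\alpha)$, $\delta=\delta_\mu(\alpha)$ and $r=\frac{2n}{2n-\mu}$, one has $F_\alpha(\cdot,v)=\frac{|v|^q}{|x|^\delta}$ with $\frac{|v|^q}{|x|^\delta}\in L^r(\R^n)$ precisely because $\frac{v}{|x|^{\delta/q}}=\frac{v}{|x|^{\alpha/2^*_s(\alpha)}}\in L^{2^*_s(\alpha)}(\R^n)$ and $qr=2^*_s(\alpha)$. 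Thus $I_\mu*F_\alpha(\cdot,u_k)\in L^{r'}(\R^n)$ with $r'=\frac{2n}{\mu}$ is uniformly bounded, since $\{u_k\}$ is bounded in $L^{2^*_s(\alpha)}(\R^n,|x|^{-\alpha})$.

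Next I would split $F_\alpha(\cdot,u_k)=F_\alpha(\cdot,u_k-u)+F_\alpha(\cdot,u)+o(1)$ in $L^r(\R^n)$ and similarly $f_\alpha(\cdot,u_k)=f_\alpha(\cdot,u_k-u)+f_\alpha(\cdot,u)+o(1)$ in $L^{r}(\R^n)$, which is exactly Lemma \ref{lemma2.5} applied with the $(r,q,\delta)$ there taken as $(2^*_s(\alpha),2^\#_\mu(\alpha),\delta_\mu(\alpha))$ — the hypothesis $\delta<nq/r$ reduces to $\delta_\mu(\alpha)<n$, which holds. The product $[I_\mu*F_\alpha(\cdot,u_k)]f_\alpha(\cdot,u_k)$ then decomposes into the "diagonal" pieces plus cross terms. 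The diagonal piece $[I_\mu*F_\alpha(\cdot,u_k-u)]f_\alpha(\cdot,u_k-u)$ tested against $\phi$ should vanish in the limit: indeed $f_\alpha(\cdot,u_k-u)\rightharpoonup 0$ weakly in $L^r(\R^n)$ (it is bounded and converges to $0$ a.e.), $\phi\in L^{2^*_s(\alpha)}(\R^n,|x|^{-\alpha})$ gives $\frac{\phi}{|x|^{\delta/q}}\,\frac{|u_k-u|^{q-1}}{|x|^{?}}$ — more carefully, I would write the integrand as the pairing of $f_\alpha(\cdot,u_k-u)\in L^r$ against $[I_\mu*F_\alpha(\cdot,u_k-u)]\,\phi/|x|^{\delta}\cdot|x|^{\delta}$... rather, I pair $[I_\mu*F_\alpha(\cdot,u_k-u)]\in L^{r'}$ (bounded) against $f_\alpha(\cdot,u_k-u)\phi\in L^{1}$, and use that $f_\alpha(\cdot,u_k-u)\to 0$ a.e. while $|f_\alpha(\cdot,u_k-u)\phi|$ is dominated in $L^1$ after a Hölder/Vitali argument, so the pairing tends to $0$ by weak-times-strong convergence. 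For the cross terms, each contains at least one factor built from $u_k-u$ (either inside the convolution or outside) which converges weakly to zero against the remaining factor, which is fixed or strongly convergent; HLS and weak Young give the needed uniform bounds, and a.e. convergence plus a dominated-convergence/Vitali argument kills the term.

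The remaining pieces are $[I_\mu*F_\alpha(\cdot,u)]f_\alpha(\cdot,u_k)\phi$ and $[I_\mu*F_\alpha(\cdot,u_k)]f_\alpha(\cdot,u)\phi$ and $[I_\mu*F_\alpha(\cdot,u)]f_\alpha(\cdot,u)\phi$; in the first two one factor is fixed and in $L^{r}$ or $L^{r'}$, and the variable factor converges strongly in $L^r$ by Lemma \ref{lemma2.5}, so these converge to $[I_\mu*F_\alpha(\cdot,u)]f_\alpha(\cdot,u)\phi$, and combined with the vanishing of all $u_k-u$ terms this yields \eqref{eq2.22}. The main obstacle is the bookkeeping: there are several cross terms, and for each one must identify the correct Hölder triple so that one factor sits in a space where it converges strongly (via Lemma \ref{lemma2.5}) and the others are uniformly bounded, then invoke a Vitali-type argument to upgrade a.e. convergence to convergence of the integral; care is needed because the weight $|x|^{-\delta_\mu(\alpha)}$ and the kernel $|x-y|^{-\mu}$ must be absorbed consistently, but no new idea beyond Lemmas \ref{lemma2.5}–\ref{lemma2.7} and Proposition \ref{prop2.4} is required.
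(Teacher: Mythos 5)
Your proposal follows essentially the same route as the paper's proof: the paper decomposes the integral via $\tilde u_k=u_k-u$ into three terms, applies Lemma \ref{lemma2.5} with $(r,q,\delta)=\bigl(2^*_{s}(\alpha),\,2^{\#}_{\mu}(\alpha),\,\delta_{\mu}(\alpha)\bigr)$ to $(u_k,u)$ and to $(u_k\phi^{1/q},u\phi^{1/q})$, and uses the weak Young inequality with $\hat p=\tfrac{2n}{2n-\mu}$ together with a strong-times-weak pairing --- exactly the ingredients you identify. The one place to tighten your write-up is the "diagonal" term: it vanishes because $I_{\mu}*F_{\alpha}(\cdot,u_k-u)$ is bounded in $L^{2n/\mu}(\R^n)$ while $f_{\alpha}(\cdot,u_k-u)\phi\to 0$ strongly in $L^{2n/(2n-\mu)}(\R^n)$ by Vitali (uniform integrability coming from H\"older against the fixed factor $|\phi|^{2n/(2n-\mu)}|x|^{-\alpha/q}\in L^{q}$), i.e.\ the correct duality is $L^{2n/\mu}$--$L^{2n/(2n-\mu)}$ rather than $L^{r'}$--$L^{1}$, and note also that $f_{\alpha}(\cdot,u_k-u)$ alone need not lie in $L^{2n/(2n-\mu)}$ --- only its product with $\phi$ does, which is precisely why the paper feeds $u_k\phi^{1/q}$ into Lemma \ref{lemma2.5}.
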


\begin{proof}
Since $\phi={\phi}^{+}-{\phi}^{-}$, we just consider $\phi\geq0$. For $n \in \mathbb{N}$, denote ${\tilde{u}_k}=u_k-u$, we rewrite the left hand side of (\ref{eq2.22}) as
\begin{align*}
& \int_{\R^{n}}
\big [   I_{\mu}* F_{\alpha}(\cdot,u_k)  \big](x)f_{\alpha}(x,u_k)\phi(x)dx  \\
=&\int_{\R^{n}} \big [   I_{\mu}*\big( F_{\alpha}(\cdot,u_k)-F_{\alpha}(\cdot,\tilde{u}_k) \big) \big] (x)f_{\alpha}(x,u_k)\phi(x)dx   \\
&+\int_{\R^{n}} \big [   I_{\mu}*\big( f_{\alpha}(\cdot,u_k)\phi-f_{\alpha}(\cdot,\tilde{u}_k)\phi \big) \big] (x)F_{\alpha}(x,\tilde{u}_k)dx  \\
&+\int_{\R^{n}}
\big [   I_{\mu}* F_{\alpha}(\cdot,\tilde{u}_k)  \big](x)f_{\alpha}(x,\tilde{u}_k)\phi(x)dx:=\tilde{B}_1+\tilde{B}_2+\tilde{B}_3.
\end{align*}
Denote $p=2^{\#}_{\mu} (\alpha)$ in this Lemma. Apply Lemma \ref{lemma2.5} with $(r,q,\delta)=(\frac{2np}{2n-\mu},p,\delta_{\mu}(\alpha))$ by taking respectively $(w_n,w)=(u_n,u)$ and then $(w_n,w)=(u_n{\phi}^{\frac{1}{p}},u{\phi}^{\frac{1}{p}})$ , and Lemma \ref{lemma2.6} with $\hat{p}=\frac{2n}{2n-\mu}$, we can complete the proof by imitating the argument of Lemma 2.4 in \cite{GSIN}.

\end{proof}

\section{proof of Proposition  \ref{pro1.4} and Corollary \ref{coro1.5}}
In this section, we give some basic properties of a weighted Morrey space and then prove Proposition  \ref{pro1.4} and Corollary \ref{coro1.5}.

The Morrey spaces were introduced by C. Morrey in 1938 \cite{CBMO} to investigate the local behavior of solutions
to some partial differential equations. Nowadays the Morrey spaces were extended to more general cases(see \cite{GPAP}, \cite{YKSS} and \cite{YSAW}). Let $p\in[1,+\infty)$ and $\gamma\in (0,n)$, the usual homogeneous Morrey space
$$L^{p,\gamma}(\R^{n})=\Big \{ u:  ||u||_{  L^{p,\gamma}(\R^{n}) } < +\infty \Big \}$$
was introduced in \cite{GPAP} with the norm
$$ ||u||_{  L^{p,\gamma}(\R^{n}) }=\mathop {\sup }\limits_{R>0,x \in \R^{n}} \Big \{R^{\gamma-n} \int_{B_R(x)} |u(y)|^pdy  \Big \}^{\frac{1}{p}} .$$
One can see that if $\gamma=n$ then $L^{p,\gamma}(\R^{n})$ coincide with $L^p(\R^{n})$ for any $p\geq1$; Similarly $L^{p,0}(\R^{n})$ coincide with $L^{\infty}(\R^{n})$.

Here we mainly state a special weighted Morrey space $L^{p,\gamma +\lambda}(\R^{n},|y|^{-\lambda})$, which was used in \cite{YKSS} and \cite{YSAW}. For $p\in[1,+\infty)$, $\gamma, \lambda>0$ and $\gamma+\lambda \in (0,n)$, we say a Lebesgue measurable function $u: \R^{n}\rightarrow \R$ belongs to $L^{p,\gamma +\lambda}(\R^{n},|y|^{-\lambda})$ if
$$ ||u||_{  L^{p,\gamma +\lambda}(\R^{n},|y|^{-\lambda}) }=\mathop {\sup }\limits_{R>0,x \in \R^{n}} \Big \{R^{\gamma+\lambda-n} \int_{B_R(x)} \frac{|u(y)|^p}{  |y|^{\lambda} }dy  \Big \}^{\frac{1}{p}} < +\infty.$$
Then the following fundamental properties \textbf{(1)}-\textbf{(5)} hold via H\"older's inequality:

\textbf{(1)}~$L^{p\rho}(\R^{n},|y|^{-\rho\lambda}) \hookrightarrow L^{p,\gamma +\lambda}(\R^{n},|y|^{-\lambda})$ for $\rho=\frac{n}{\gamma +\lambda}>1$.

\textbf{(2)}~For any $p\in(1,+\infty)$, we have $L^{p,\gamma +\lambda}(\R^{n},|y|^{-\lambda}) \hookrightarrow L^{1,\frac{\gamma}{p}+ \frac{\lambda}{p} }(\R^{n},|y|^{-\frac{\lambda}{p}}) .$

\textbf{(3)}~Take $\gamma+\lambda=n$, we get $L^{p}(\R^{n},|y|^{-\lambda})$.  \\
Moreover, if we assume $s\in (0,1)$ and $0<\alpha<2s<n$, then we have

\textbf{(4)}~For any $p\in[1,2^*_{s}(\alpha))$, ${\dot{H}}^s(\R^{n}) \hookrightarrow L^{2^*_{s}(\alpha)}(\R^{n},|y|^{-\alpha}) \hookrightarrow L^{p,\frac{n-2s}{2}p+pr}(\R^{n},|y|^{-pr})$ with $r=\frac{\alpha}{2^*_{s}(\alpha)}$ and the three norms in these spaces share the same dilation invariance.

\textbf{(5)}~For any $p\in[1,2^*_{s})$, ${\dot{H}}^s(\R^{n}) \hookrightarrow L^{2^*_{s}}(\R^{n}) \hookrightarrow L^{p,\frac{n-2s}{2}p}(\R^{n})$, refer to page 815 in \cite{GPAP}. \\

\begin{lemma} (Theorem 1 in \cite{ESRW}, or Theorem D in \cite{BMRW}) \label{lemma3.4}
Suppose that $0<\tilde{s}<n$, $1 < \tilde{p} \leq \tilde{q} <+\infty$, $\tilde{p}'=\frac{\tilde{p}}{\tilde{p}-1}$ and that $V$ and $W$ are nonnegative measurable functions on $\R^{n}$, $n \geq 1$. If for some $\sigma>1$
\begin{equation}\label{eq3.3}
|Q|^{ \frac{\tilde{s}}{n}+\frac{1}{\tilde{q}}-\frac{1}{\tilde{p}}} { \Big(  \frac{1}{|Q|} \int_{Q}V^{\sigma}dy    \Big) }^{ \frac{1}{{\tilde{q}} \sigma} } { \Big(  \frac{1}{|Q|} \int_{Q}W^{(1-{\tilde{p}}')\sigma}dy    \Big) }^{ \frac{1}{ {\tilde{p}}' \sigma} } \leq C_{\sigma}
\end{equation}
for all cubes $Q \subset {\R}^n$, then for any function $f \in L^{\tilde{p}}({\R}^n,W(y))$ we have
\begin{equation}\label{eq3.4}
 { \Big(   \int_{{\R}^n}|{\ell}_{\tilde{s}}f(y)|^{\tilde{q}}V(y)dy    \Big) }^{ \frac{1}{\tilde{q}} }  \leq C C_{\sigma} { \Big(   \int_{{\R}^n}|f(y)|^{\tilde{p}}W(y)dy    \Big) }^{ \frac{1}{\tilde{p}} }
\end{equation}
where $C=C(\tilde{p},\tilde{q},n)$ and  ${\ell}_{\tilde{s}}f$ denotes the Riesz potential of order $\tilde{s}$, namely
\begin{equation}\label{eq3.2}
 {\ell}_{\tilde{s}}f(y)=\int_{{\R}^n} \frac{f(z)}{|y-z|^{n-{\tilde{s}}}}dz.
\end{equation}
\end{lemma}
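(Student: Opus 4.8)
The plan is to rely on the two-weight theory for Riesz potentials rather than to reprove it from scratch: inequality \eqref{eq3.4} under the testing condition \eqref{eq3.3} is precisely Theorem~1 of \cite{ESRW} (equivalently Theorem~D of \cite{BMRW}), and in the applications of Section~3 I shall simply quote it. For completeness I indicate the route one would take. Since $\ell_{\tilde s}$ has a nonnegative kernel it is enough to treat $f\ge 0$, and after a harmless rescaling of $V$ one may normalize $C_\sigma=1$.

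First I would pass to a dyadic model. By the standard shifted-lattices trick there are a dimensional constant $c_n$ and finitely many dyadic grids $\mathcal D_1,\dots,\mathcal D_N$ with
\[
\ell_{\tilde s}f(y)\;\le\;c_n\sum_{i=1}^{N}\ \sum_{Q\in\mathcal D_i}|Q|^{\tilde s/n}\Big(\tfrac{1}{|Q|}\int_Q f\Big)\mathbf 1_Q(y),\qquad y\in\R^n,
\]
and since restricting \eqref{eq3.3} from a cube to a dyadic subcube costs only a dimensional factor, the testing condition persists for all cubes of each grid. Thus it suffices to bound a single dyadic fractional integral $I^{\mathcal D}_{\tilde s}f=\sum_{Q\in\mathcal D}|Q|^{\tilde s/n}\langle f\rangle_Q\,\mathbf 1_Q$ from $L^{\tilde p}(W)$ into $L^{\tilde q}(V)$.

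Next, using $1<\tilde p\le\tilde q<\infty$, I would linearize by duality, reducing to the bilinear estimate $\sum_{Q\in\mathcal D}|Q|^{\tilde s/n}\langle f\rangle_Q\int_Q gV\le C\|f\|_{L^{\tilde p}(W)}\|g\|_{L^{\tilde q'}(V)}$ for $f,g\ge 0$, and then run two Calder\'on--Zygmund stopping-time constructions — one adapted to $f$ with the weight $W$, one to $g$ with the weight $V$ — to replace the full sum over $Q\in\mathcal D$ by a sum over a \emph{sparse} family of ``principal'' cubes, on which the relevant quantities are essentially constant and a Carleson-packing property is available. On each principal cube H\"older's inequality lets the $\sigma$-bumped quantities in \eqref{eq3.3} dominate the plain averages $\tfrac{1}{|Q|}\int_Q W^{-1/\tilde p}$ and $\tfrac{1}{|Q|}\int_Q V$; the excess integrability $\sigma>1$ is exactly the slack that renders the resulting series over the sparse family geometric, and summing it yields \eqref{eq3.4} with a constant linear in $C_\sigma$ (the factor $C C_\sigma$ in the statement).

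The hard part is the off-diagonal range $\tilde p<\tilde q$: there one cannot collapse the pairing onto a single measure, the two stopping-time families genuinely interact, and one must extract the full scaling exponent $\tfrac{\tilde s}{n}+\tfrac{1}{\tilde q}-\tfrac{1}{\tilde p}$ appearing in \eqref{eq3.3} — it is precisely here that a \emph{power} bump $\sigma>1$, rather than plain averages, is indispensable, and this is where the bulk of the argument in \cite{ESRW} is spent. In every instance in which we invoke the lemma one has $\tilde p\le\tilde q$, with the diagonal and near-diagonal cases being the relevant ones, so the scheme above is more than sufficient for our purposes.
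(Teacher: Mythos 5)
Your proposal handles this exactly as the paper does: Lemma \ref{lemma3.4} is quoted from Theorem 1 of \cite{ESRW} (Theorem D of \cite{BMRW}) without proof, and your appeal to that reference is the intended justification. The supplementary sketch of the dyadic/sparse route is reasonable background but is not required, since the paper itself offers no argument beyond the citation.
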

\begin{remark} One can refer to \cite{GPAP} for more information about the Riesz potential. \\
\end{remark}

\noindent\textbf{Proof of  Proposition \ref{pro1.4}}

For $u \in {\dot{H}}^s(\R^{n})$, we have $\hat{g}(\xi):=|\xi|^s\hat{u}(\xi) \in L^2(\R^{n})$ and
$||u||_{{\dot{H}}^s(\R^{n})}=||g||_{L^2(\R^{n})}$  by Plancherel's theorem. Thus, $u(x)=(\frac{1}{|\xi|^s})^{\vee}*g(x)={\ell}_sg(x)$, where ${\ell}_sg(x)=\int_{{\R}^n} \frac{g(z)}{|x-z|^{n-{s}}}dz$.

Firstly, take $\tilde{s}=s$, $\tilde{p}=2$, $\max \{ {2, 2^*_{s}-1} \} < {\tilde{q}}< 2^*_{s}(\alpha)$, $W(y)\equiv 1$, $V(y)=\frac{  {|u(y)|}^  { 2^{*}_{s}(\alpha)-{\tilde{q}}}    }   {  |y|^{\alpha}  }$ and $\sigma=\frac{1}{2^*_{s}-{\tilde{q}}}>1$ in Lemma \ref{lemma3.4}, then (\ref{eq3.3}) becomes
\begin{equation}\label{eq3.5}
|Q|^{ \frac{s}{n}+\frac{1}{\tilde{q}}-\frac{1}{2}} { \Big(  \frac{1}{|Q|} \int_{Q}V^{\sigma}dy    \Big) }^{ \frac{1}{{\tilde{q}} \sigma} }  \leq C_{\sigma}.
\end{equation}

Secondly, we verify condition (\ref{eq3.3}). For any fixed $x\in {\R}^n$, replacing $Q$ by ball $B_{R}(x)$, since $0<[2^{*}_{s}(\alpha)-{\tilde{q}}]\sigma
<1$ and $\frac{ t \sigma {\alpha}}{ 1- [{2^{*}_{s}(\alpha)}-{\tilde{q}}]\sigma }<n$, we deduce by H\"older's inequality that
\begin{align*}
R^{-n} \int_{B_R(x)}V^{\sigma}dy&=R^{-n} \int_{B_R(x)}{\frac{{|u|}^{ [{2^{*}_{s}(\alpha)}-{\tilde{q}}]\sigma}}{|y|^  {  \sigma {\alpha}    }   }}dy=R^{-n} \int_{B_R(x)} \frac{1}{{|y|^  { t \sigma {\alpha}    }   }  } \cdot {\frac{{|u|}^{ [{2^{*}_{s}(\alpha)}-{\tilde{q}}]\sigma}}{|y|^  { (1-t) \sigma {\alpha}    }   }}dy  \\
%&\leq R^{-n}  { \Big( \int_{B_R(x)} \frac{dy}{{|y|^  {  \frac{ t \sigma {\alpha}}{ 1- [{2^{*}_{s}(\alpha)}-{\tilde{q}}]\sigma }       }   }  }  \Big ) }^{1- [{2^{*}_{s}(\alpha)}-{\tilde{q}}]\sigma   }   { \Big( \int_{B_R(x)}  {\frac{{|u|}}{|y|^  {    \frac{(1-t) {\alpha}}{{2^{*}_{s}(\alpha)}-{\tilde{q}}}    }   }}dy  \Big ) }^{[{2^{*}_{s}(\alpha)}-{\tilde{q}}]\sigma   }   \\
&\leq R^{-n}  { \Big( \int_{B_R(0)} \frac{dy}{{|y|^  {  \frac{ t \sigma {\alpha}}{ 1- [{2^{*}_{s}(\alpha)}-{\tilde{q}}]\sigma }       }   }  }  \Big ) }^{1- [{2^{*}_{s}(\alpha)}-{\tilde{q}}]\sigma   }   { \Big( \int_{B_R(x)}  {\frac{{|u|}}{|y|^r  }}dy  \Big ) }^{[{2^{*}_{s}(\alpha)}-{\tilde{q}}]\sigma   }   \\
&\leq CR^{-t\alpha \sigma-n[{2^{*}_{s}(\alpha)}-{\tilde{q}}]\sigma}   { \Big( \int_{B_R(x)}  {\frac{{|u|}}{|y|^r }}dy  \Big ) }^{[{2^{*}_{s}(\alpha)}-{\tilde{q}}]\sigma   }
\end{align*}
where $t:=\frac{\tilde{q}}{2^{*}_{s}(\alpha)}$ and $r:={\frac{(1-t) {\alpha}}{{2^{*}_{s}(\alpha)}-{\tilde{q}}} }=\frac{\alpha}{2^{*}_{s}(\alpha)}$. Therefore,
\begin{align*}
&R^{s+\frac{n}{{\tilde{q}}}-\frac{n}{2}} { \Big(  R^{-n} \int_{ B_R(x) }V^{\sigma}dy    \Big) }^{ \frac{1}{{\tilde{q}} \sigma} }  \\
\leq & R^{s+\frac{n}{{\tilde{q}}}-\frac{n}{2}} \Big\{ CR^{-t\alpha \sigma-n[{2^{*}_{s}(\alpha)}-{\tilde{q}}]\sigma}   { \Big( \int_{B_R(x)}  {\frac{{|u|}}{|y|^r}}dy  \Big ) }^{[{2^{*}_{s}(\alpha)}-{\tilde{q}}]\sigma   }  \Big\}^{\frac{1}{{\tilde{q}} \sigma}}  \\
\leq & C \Big\{  R^{  (s+\frac{n-t\alpha}{{\tilde{q}}}-\frac{n}{2})
{\frac{{\tilde{q}}}{{2^{*}_{s}(\alpha)}-{\tilde{q}}}}   }   R^{-n}   {  \int_{B_R(x)}  {\frac{{|u|}}{|y|^r }}dy   }   \Big\}^{  \frac{{2^{*}_{s}(\alpha)}-{\tilde{q}}}{\tilde{q}}  } \\
= & C \Big\{  R^{   \frac{n-2s}{2}+r }   R^{-n}   {  \int_{B_R(x)}  {\frac{{|u|}}{|y|^  { r }   }}dy   }   \Big\}^{  \frac{{2^{*}_{s}(\alpha)}-{\tilde{q}}}{\tilde{q}}  }
\leq  C {||u||}^{\frac{{2^{*}_{s}(\alpha)}-{\tilde{q}}}
{\tilde{q}}}_{ L^{1,\frac{n-2s}{2}+r }(\R^{n},|y|^{-r})}:=C_{\sigma}.
\end{align*}
Since $u={\ell}_sg$, and by Lemma \ref{lemma3.4},
$$   \int_{ \R^{n} }  \frac{ |u(y)|^{ 2^*_{s,\alpha} } }  {  |y|^{\alpha} } dy =\int_{ \R^{n} }  {|{\ell}_sg(y)|}^{\tilde{q}}V(y) dy \leq (CC_{\sigma})^{\tilde{q}} ||g||^{\tilde{q}}_{L^2} \leq C ||u||_{{\dot{H}}^s(\R^{n})}^{\tilde{q}} {       ||u||^{^{{2^{*}_{s}(\alpha)}-{\tilde{q}}                      }}_{  L^{1,\frac{n-2s}{2}+r}(\R^{n},|y|^{-r}) } }.$$
Then, for any $\theta=\frac{{\tilde{q}}}{ 2^*_{s}(\alpha) }$ satisfying $\max \{ \frac{2}{2^*_{s}(\alpha)}, \frac{2^*_{s}-1}{2^*_{s}(\alpha)}  \} < \theta < 1$ and any $p\in[1,2^*_{s}(\alpha))$, we have
\begin{equation*}
 \Big( \int_{ \R^{n} }  \frac{ |u(y)|^{ 2^*_{s}(\alpha)} }  {  |y|^{\alpha} } dy  \Big)^{ \frac{1}{  2^*_{s} (\alpha)  }}  \leq C ||u||_{{\dot{H}}^s(\R^{n})}^{\theta} ||u||^{1-\theta}_{  L^{p,\frac{n-2s}{2}p+pr}(\R^{n},|y|^{-pr}) }.
\end{equation*} \qed  \\

\noindent\textbf{Proof of Corollary \ref{coro1.5}}

For $n\geq3$ and any $u \in {C}_{0}^{\infty}(\R^{n})$, we have
  $$u(x)={\Delta}^{-1}\Delta u=C_1\int_{\R^{n}}\frac{\Delta u(y)}{{|x-y|}^{n-2}}dy=C_2\int_{\R^{n}}\frac{(x-y)\nabla u(y)}{{|x-y|}^n}dy,$$
Thus
$$|u(x)| \leq |{C_2}|\int_{\R^{n}}\frac{| {\nabla u(y)} |}{{|x-y|}^{n-1}}dy \leq C {\ell}_{1}(| {\nabla u} |)(x)$$
where $C_1=C_1(n)$, $C_2=C_2(n)$ and $C=C(n)>0$ are different constants. These inequalities hold for $n=2$ via the logarithmic kernel(See \cite{GPAP}). By density of ${C}_{0}^{\infty}(\R^{n})$ in ${D}^{1,p}(\R^{n})$, it is also true for any $u \in {D}^{1,p}(\R^{n})(n\geq 2)$.

Take $\tilde{s}=1$, $\tilde{p}=p>1$, $\max \{ {p, p^*-1} \} < {\tilde{q}}< p^*(\alpha)$, $W(y)\equiv 1$, $V(y)=\frac{  {|u(y)|}^  { p^{*}(\alpha)-{\tilde{q}}}    }   {  |y|^{\alpha}  }$ and $\sigma=\frac{1}{p^{*}-\tilde{q}}>1$ in Lemma \ref{lemma3.4}. The remain argument is similar to the case in $ {\dot{H}}^s(\R^{n})$.   \qed

\begin{lemma} (Theorem 1 in \cite{GPAP}) \label{lemma3.6}
Let $s \in (0,1)$, $n>2s$ and $ 2^*_{s}=\frac{2n}{n-2s} $. Then there exists $C=C(n,s)>0$ such that for any $ \max\{ \frac{2}{2^*_{s}},1-\frac{1}{2^*_{s}} \} < \theta <1 $ and for any $1\leq p <2^*_{s}$
\begin{equation}\label{eq3.6}
 ||u||_{ L^{ 2^*_{s} }(\R^{n})} \leq C ||u||_{{\dot{H}}^s(\R^{n})}^{\theta} ||u||^{1-\theta}_{  L^{p,\frac{n-2s}{2}p}(\R^{n}) },~~~~ \forall u \in {\dot{H}}^s(\R^{n}).
\end{equation}
\end{lemma}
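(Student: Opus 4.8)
The plan is to deduce Lemma \ref{lemma3.6} from Proposition \ref{pro1.4} by sending the Hardy--Sobolev weight parameter $\alpha$ to $0$, while keeping track of the constants so that they do not blow up in the limit. The statement we want — the unweighted improved Sobolev inequality of Palatucci--Pisante — is precisely the $\alpha=0$ instance of \eqref{eq1.6}, up to the fact that in Proposition \ref{pro1.4} we assumed $\alpha>0$ strictly (the weight $|y|^{-\alpha}$ is needed to make the Riesz-potential weight condition \eqref{eq3.3} nontrivial). So the honest route is to re-run the proof of Proposition \ref{pro1.4} with $\alpha=0$ directly, rather than to pass to a limit.

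Concretely, I would argue as follows. Write $u = \ell_s g$ with $\hat g(\xi) = |\xi|^s \hat u(\xi) \in L^2(\R^n)$ and $\|u\|_{\dot H^s(\R^n)} = \|g\|_{L^2(\R^n)}$, exactly as in the proof of Proposition \ref{pro1.4}. Then invoke Lemma \ref{lemma3.4} with $\tilde s = s$, $\tilde p = 2$, $W(y) \equiv 1$, and $\tilde q \in (\max\{2, 2^*_s - 1\}, 2^*_s)$, and with $V(y) = |u(y)|^{2^*_s - \tilde q}$ (no weight now, since $\alpha = 0$), and $\sigma = \frac{1}{2^*_s - \tilde q} > 1$ (here one checks $0 < 2^*_s - \tilde q < 1$, which forces the upper restriction $\tilde q < 2^*_s$ and hence the hypothesis $\tilde q > 2^*_s - 1$ to get a nonempty range). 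The weight condition \eqref{eq3.3} then reduces, over a ball $B_R(x)$, to
\begin{equation*}
R^{s + \frac{n}{\tilde q} - \frac{n}{2}} \Big( R^{-n} \int_{B_R(x)} |u(y)|^{(2^*_s - \tilde q)\sigma}\, dy \Big)^{\frac{1}{\tilde q \sigma}} \leq C_\sigma,
\end{equation*}
and since $(2^*_s - \tilde q)\sigma = 1$, the inner integral is just $\int_{B_R(x)} |u|\, dy$; collecting the powers of $R$, the exponent is $s + \frac{n}{\tilde q} - \frac{n}{2} - \frac{n(2^*_s - \tilde q)}{\tilde q} \cdot \frac{1}{1}$ times the appropriate factor, which simplifies — exactly as in the $\alpha>0$ computation with $r = 0$ — to $\frac{n-2s}{2}$, so that the left side is $\le C \|u\|_{L^{1,\frac{n-2s}{2}}(\R^n)}^{(2^*_s - \tilde q)/\tilde q} =: C_\sigma$.

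Then Lemma \ref{lemma3.4} gives $\int_{\R^n} |u|^{\tilde q}\, dy = \int_{\R^n} |\ell_s g|^{\tilde q} V\, dy \le (CC_\sigma)^{\tilde q} \|g\|_{L^2}^{\tilde q} \le C \|u\|_{\dot H^s}^{\tilde q} \|u\|_{L^{1,\frac{n-2s}{2}}(\R^n)}^{2^*_s - \tilde q}$, i.e.
\begin{equation*}
\|u\|_{L^{2^*_s}(\R^n)} \le C \|u\|_{\dot H^s(\R^n)}^{\tilde q / 2^*_s} \|u\|_{L^{1,\frac{n-2s}{2}}(\R^n)}^{1 - \tilde q / 2^*_s},
\end{equation*}
which is \eqref{eq3.6} with $\theta = \tilde q / 2^*_s \in (\max\{\frac{2}{2^*_s}, 1 - \frac{1}{2^*_s}\}, 1)$. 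To upgrade from $p = 1$ to general $p \in [1, 2^*_s)$, I would use property \textbf{(2)} of the weighted Morrey scale (with $\lambda = 0$): $L^{p, \frac{n-2s}{2}p}(\R^n) \hookrightarrow L^{1, \frac{n-2s}{2p}}(\R^n)$ — wait, the correct chain is $L^{p,\frac{n-2s}{2}p}(\R^n)\hookrightarrow L^{1,\frac{n-2s}{2}}(\R^n)$ after rescaling the exponent appropriately — so one absorbs the change into the constant $C=C(n,s)$ and adjusts $\theta$ in the stated open range. The main obstacle, and the only delicate point, is bookkeeping: verifying that the range $\max\{2, 2^*_s - 1\} < \tilde q < 2^*_s$ is nonempty (it is, since $2^*_s - 1 < 2^*_s$ and $2 < 2^*_s$ as $n > 2s$), that $\sigma > 1$, and that the power of $R$ collapses exactly to $\frac{n-2s}{2}$ so that the $R$-supremum defining the Morrey norm is finite and scale-correct; all of this is the $r \to 0$ specialization of the computation already carried out for Proposition \ref{pro1.4}, so no new idea is required.
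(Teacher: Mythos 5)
Your proof is correct, but note that the paper does not actually prove Lemma \ref{lemma3.6}: it is quoted verbatim as Theorem 1 of \cite{GPAP}, and the remark following it only observes that formally setting $\alpha=0$ in Proposition \ref{pro1.4} recovers \eqref{eq3.6}. What you have done is supply the proof that this remark gestures at, namely re-running the Sawyer--Wheeden argument (Lemma \ref{lemma3.4}) of Proposition \ref{pro1.4} with $\alpha=0$, $r=0$. The bookkeeping checks out: with $V=|u|^{2^*_s-\tilde q}$, $W\equiv 1$, $\sigma=(2^*_s-\tilde q)^{-1}$ one has $V^{\sigma}=|u|$, the verification of \eqref{eq3.3} is in fact \emph{simpler} than in the weighted case (no H\"older splitting of the weight is needed), and the exponent of $R$ collapses to $\frac{n-2s}{2}-n$ so that the left side of \eqref{eq3.3} is controlled by $\|u\|_{L^{1,\frac{n-2s}{2}}(\R^n)}^{(2^*_s-\tilde q)/\tilde q}$; the constraints $\tilde q>2$ (so $\tilde q\ge\tilde p$) and $\tilde q>2^*_s-1$ (so $\sigma>1$) give exactly the stated range of $\theta=\tilde q/2^*_s$. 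Two small touch-ups: the first embedding you wrote, $L^{p,\frac{n-2s}{2}p}(\R^n)\hookrightarrow L^{1,\frac{n-2s}{2p}}(\R^n)$, is a slip of the exponent --- your corrected version $L^{p,\frac{n-2s}{2}p}(\R^n)\hookrightarrow L^{1,\frac{n-2s}{2}}(\R^n)$ is the right one (property \textbf{(2)} with $\lambda=0$) --- and when passing from $p=1$ to general $p\in[1,2^*_s)$ no adjustment of $\theta$ is needed at all, since the embedding only replaces the Morrey norm raised to the fixed power $1-\theta$ and hence only changes the constant $C$.
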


\begin{remark}
If $\alpha=0$ in Proposition \ref{pro1.4}, then inequality (\ref{eq1.6}) becomes inequality (\ref{eq3.6}).
\end{remark}

\section{Solving the minimization problems (\ref{eq1.8})-(\ref{eq1.9})}
In this section, we solve the minimization problems (\ref{eq1.8})-(\ref{eq1.9}). Using the embeddings (\ref{eq1.06}) and the inequality (\ref{eq1.6}), we can prove the existence of minimizers for
\begin{equation*}
   S_{\mu}(n,s,\gamma,\alpha)=\mathop {\inf }\limits_{u \in \dot{H}^{s}(\R^{n})) \setminus \{0\}  }   \frac{ ||u||^2 }
   {  {B_{\alpha}(u,u)}^{\frac{1}{  2^{\#}_{\mu}(\alpha)  }}  }
\end{equation*}
and
\begin{equation*}
   \Lambda(n,s,\gamma,\alpha)=  \mathop {\inf }\limits_{u \in \dot{H}^{s}(\R^{n}) \setminus \{0\}}   \frac{||u||^2}{\Big( \int_{\R^{n}} \frac{{|u|}^{ { 2^{*}_{s} }(\alpha)}}{|x|^{\alpha}}dx  \Big)^{\frac{2}{  2^{*}_{s}(\alpha)  }}}
\end{equation*}
where $B_{\alpha}(\cdot,\cdot)$ was defined in (\ref{eq2.01}). We can derive the following results:
\begin{proposition}\label{pro1.7}
Let $s \in(0,1)$. Then  \\
$(1)$ If $0<\alpha< 2s<n$, $\mu \in (0,n)$ and $\gamma<\gamma_{H}$, then $S_{\mu}(n,s,\gamma,\alpha)$ is attained in $\dot{H}^{s}(\R^n);$ \\
$(2)$ If $n>2s$, $\mu \in (0,n)$ and $0 \leq \gamma < \gamma_{H}$, then $S_{\mu}(n,s,\gamma,0)$ is  attained in $\dot{H}^{s}(\R^n);$ \\
$(3)$ If $0<\alpha< 2s<n$ and $\gamma<\gamma_{H}$, then $\Lambda(n,s,\gamma,\alpha)$ is attained in $\dot{H}^{s}(\R^n);$ \\
$(4)$ If $n>2s$ and $0 \leq \gamma < \gamma_{H}$, then $\Lambda(n,s,\gamma,0)$ is  attained in $\dot{H}^{s}(\R^n).$
\end{proposition}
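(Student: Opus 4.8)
The plan is to prove all four attainment claims by a concentration-compactness-free argument that exploits the improved Sobolev inequality \eqref{eq1.6} together with the embedding chain \eqref{eq1.06}; cases $(3)$–$(4)$ for $\Lambda$ and case $(1)$ for $S_\mu$ are essentially parallel, so I would treat $S_\mu$ with $0<\alpha<2s$ in detail and indicate the trivial modifications for the others, while case $(2)$ for $S_\mu(n,s,\gamma,0)$ (and $(4)$ for $\Lambda(n,s,\gamma,0)$) will be handled separately by the translation-and-dilation compactness method of Filippucci--Pucci--Robert \cite{RFPP} or Dipierro--Montoro--Peral--Sciunzi \cite{SDLM}, as already announced after \eqref{eq1.9}. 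I would first record the scaling invariances: for $\tilde u_t(x)=t^{(n-2s)/2}u(tx)$ one has $\|\tilde u_t\|=\|u\|$, $B_\alpha(\tilde u_t,\tilde u_t)=B_\alpha(u,u)$, $\int |\tilde u_t|^{2^*_s(\alpha)}|x|^{-\alpha}\,dx = \int |u|^{2^*_s(\alpha)}|x|^{-\alpha}\,dx$, and also the Morrey norm $\|\tilde u_t\|_{L^{2,n-2s+2r}(\R^n,|y|^{-2r})}$ is invariant, with $r=\alpha/2^*_s(\alpha)$; so WLOG a minimizing sequence can be normalized by, say, $B_\alpha(u_k,u_k)=1$ (resp.\ $\int|u_k|^{2^*_s(\alpha)}|x|^{-\alpha}=1$) and then $\|u_k\|\to S_\mu$ (resp.\ $\to\Lambda^{1/2}$), and $\{u_k\}$ is bounded in $\dot H^s(\R^n)$.

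Next I would extract a weak limit $u_k\rightharpoonup u$ in $\dot H^s(\R^n)$ and pass to a.e.\ convergence. The crucial point is to show $u\not\equiv0$. From the normalization $B_\alpha(u_k,u_k)=1$ and the inequality \eqref{eq2.6}, $\int |u_k|^{2^*_s(\alpha)}|x|^{-\alpha}\,dx$ is bounded below away from $0$; hence by \eqref{eq1.6} (with $p=2$, $\theta$ fixed in $(\bar\theta,1)$) and the boundedness of $\|u_k\|$, the Morrey norm $\|u_k\|_{L^{2,n-2s+2r}(\R^n,|y|^{-2r})}$ is bounded below by a positive constant. By definition of the Morrey norm I can choose $\lambda_k>0$, $x_k\in\R^n$ with $\lambda_k^{-2s+2r}\int_{B_{\lambda_k}(x_k)}|u_k|^2|y|^{-2r}\,dy \geq c_1>0$; replacing $u_k$ by $v_k(x)=\lambda_k^{(n-2s)/2}u_k(\lambda_k x)$ (which changes none of the quantities being minimized), one gets $\int_{B_1(\tilde x_k)}|v_k|^2|x|^{-2r}\,dx\geq c_1$ with $\tilde x_k=x_k/\lambda_k$. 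I would then show $\{\tilde x_k\}$ is bounded: if $|\tilde x_k|\to\infty$, split the ball $B_1(\tilde x_k)$ away from the origin so that $|x|^{-2r}$ is comparable to $|\tilde x_k|^{-2r}\to 0$ there, combined with the uniform bound on $\int|v_k|^{2^*_s(\alpha)}|x|^{-\alpha}$ and the local $L^q$-compactness ($q<2^*_s$) from Corollary 7.2 of \cite{ENEV}, contradicting $\int_{B_1(\tilde x_k)}|v_k|^2|x|^{-2r}\geq c_1$; once $\tilde x_k$ lies in a fixed compact set, $r<s$ lets me pass the lower bound to the weak limit via Lemma \ref{lemma2.3}, so $v\not\equiv0$.

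With a nontrivial weak limit $v$ of the (renamed) minimizing sequence in hand, I would close the argument by the Brezis--Lieb splitting. Write $w_k=v_k-v$, so $v_k\rightharpoonup v$, $w_k\rightharpoonup 0$. By weak convergence $\|v_k\|^2=\|v\|^2+\|w_k\|^2+o(1)$, and Lemma \ref{lemma2.8} gives $B_\alpha(v_k,v_k)=B_\alpha(w_k,w_k)+B_\alpha(v,v)+o(1)$ (resp.\ the weighted Brezis--Lieb lemma \ref{lemma2.5} with $r=2^*_s(\alpha)$, $q=2^*_s(\alpha)$, $\delta=\alpha$ gives $\int|v_k|^{2^*_s(\alpha)}|x|^{-\alpha}=\int|w_k|^{2^*_s(\alpha)}|x|^{-\alpha}+\int|v|^{2^*_s(\alpha)}|x|^{-\alpha}+o(1)$). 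Using the definition of $S_\mu$ on both $v$ and $w_k$, i.e.\ $\|v\|^2\geq S_\mu B_\alpha(v,v)^{1/2^\#_\mu(\alpha)}$ and $\|w_k\|^2\geq S_\mu B_\alpha(w_k,w_k)^{1/2^\#_\mu(\alpha)}$, together with the superadditivity inequality $a^{1/2^\#_\mu(\alpha)}+b^{1/2^\#_\mu(\alpha)}\le (a+b)^{1/2^\#_\mu(\alpha)}$ for $a,b\ge0$ (valid since $2^\#_\mu(\alpha)>1$) and the normalization $B_\alpha(v_k,v_k)\to1$, I obtain $S_\mu \geq S_\mu\bigl(B_\alpha(v,v)^{1/2^\#_\mu(\alpha)}+\lim B_\alpha(w_k,w_k)^{1/2^\#_\mu(\alpha)}\bigr) = S_\mu$; forcing equality, and since $B_\alpha(v,v)>0$ strictly (as $v\not\equiv0$), the only way is $\lim B_\alpha(w_k,w_k)=0$, hence $B_\alpha(v,v)=1$ and $\|w_k\|\to0$, so $v_k\to v$ strongly and $v$ attains $S_\mu$. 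The same bookkeeping with the exponents $2^*_s(\alpha)$ and $2/2^*_s(\alpha)<1$ works for $\Lambda$. The main obstacle is the nonvanishing/boundedness step for $\{\tilde x_k\}$: this is exactly where \eqref{eq1.06} and \eqref{eq1.6} replace the concentration-compactness principle, and one has to be careful that the weight $|x|^{-2r}$ both provides the lower bound needed to locate mass and (because $2r<2s$) remains locally $\dot H^s$-compact so the mass survives in the limit.
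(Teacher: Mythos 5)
Your proposal follows essentially the same route as the paper's proof: normalize the minimizing sequence, use \eqref{eq2.6}, \eqref{eq1.06} and the improved Sobolev inequality \eqref{eq1.6} to bound the weighted Morrey norm below, rescale so that mass concentrates in $B_1(\tilde x_k)$, show $\{\tilde x_k\}$ is bounded by the decay of the weight at infinity, pass the local weighted $L^2$ lower bound to the weak limit via Lemma \ref{lemma2.3} (using $r<s$), and conclude with the Br\'ezis--Lieb splitting of Lemma \ref{lemma2.8} plus subadditivity; case (2) is likewise handled in the paper by the rearrangement method of \cite{RFPP}/\cite{SDLM} that you cite. One small correction: the elementary inequality should read $(a+b)^{1/2^{\#}_{\mu}(\alpha)} \le a^{1/2^{\#}_{\mu}(\alpha)} + b^{1/2^{\#}_{\mu}(\alpha)}$ (subadditivity of $t\mapsto t^{1/m}$ for $m>1$), not the reverse as you wrote it --- your subsequent ``forcing equality'' step implicitly uses the correct direction.
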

\begin{remark}
 We only prove (1)-(2) in this Section since the strategy can be applied to prove (3)-(4); Although (3) has been proved in \cite{NGSS}, our method is more direct and effective; We can derive $S_{\mu}(n,s,\gamma,\alpha)\geq \frac{\Lambda(n,s,\gamma,\alpha)}{ {C(n,\mu)}^{\frac{1}{  2^{\#}_{\mu}(\alpha)  }} }$ and $S_{\mu}(n,s,0,0)= \frac{\Lambda(n,s,0,0)}{ {C(n,\mu)}^{\frac{1}{  2^{\#}_{\mu} }} }$ from (\ref{eq2.6}).
\end{remark}

\noindent\textbf{Proof of Proposition \ref{pro1.7} }

$\textbf{(1)}$ If $ 0<\alpha<2s<n $ and $\gamma <\gamma_H$, let $\{u_k\}$ be a minimizing sequence of $S_{\mu}(n,s,\gamma,\alpha)$, that is
$$  {B_{\alpha}(u_k,u_k)}=1,~~~~~~~~||u_k||^2 \rightarrow S_{\mu}(n,s,\gamma,\alpha).$$
Then the embeddings (\ref{eq1.06}), the improved Sobolev inequality (\ref{eq1.6}) and (\ref{eq2.6}) imply that there exists $C>0$ such that
$$   0<C \leq ||u_k||_{  L^{2,{n-2s}+2r}(\R^{n},|y|^{-2r}) } \leq C^{-1}$$
where $r=\frac{\alpha}{ 2^*_{s}(\alpha) }$. For any $k\geq1$, we may find ${\lambda}_k>0$ and $x_k \in \R^{n}$ such that
$$  {\lambda}_k^{-2s+2r} \int_{B_{{\lambda}_k}(x_k)} \frac{|u_k(y)|^2}{  |y|^{2r} }dy > ||u_k||^2_{  L^{2,{n-2s}+2r}(\R^{n},|y|^{-2r}) } -\frac{C}{2k} \geq C_1>0.$$
Let $v_k(x)={\lambda}_k^{ \frac{n-2s}{2} }u_k({\lambda}_kx)$ and ${\tilde{x}}_k=\frac{x_k}{{\lambda}_k}$, then
\begin{equation}\label{eq4.4}
\int_{B_{1}({\tilde{x}}_k)} \frac{|v_k(x)|^2}{  |x|^{2r} }dx  \geq C_1>0.
\end{equation}
Since $S_{\mu}(n,s,\gamma,\alpha)$ is invariant under the previous dilation given by $\lambda_k$, we have
$$  {B_{\alpha}(v_k,v_k)}=1,~~~~~~~~||v_k||^2 \rightarrow S_{\mu}(n,s,\gamma,\alpha).$$
By H$\ddot{o}$lder's inequality,
\begin{align*}
   0<C_1 \leq \int_{B_{1}({\tilde{x}}_k)} \frac{|v_k(x)|^2}{  |x|^{2r} }dx  &\leq  { \Big( \int_{B_{1}({\tilde{x}}_k)}dx\Big)}^{1-\frac{2}{2^*_{s}(\alpha)}}
 { \Big(  \int_{B_{1}({\tilde{x}}_k)} \frac{|v_k(x)|^{2^*_{s}(\alpha)}}{  |x|^{\alpha} }dx  \Big)}^{\frac{2}{2^*_{s}(\alpha)}}  \\
   &\leq C { \Big(  \int_{B_{1}({\tilde{x}}_k)} \frac{|v_k(x)|^{2^*_{s}(\alpha)}}{  |x|^{\alpha} }dx  \Big)}^{\frac{2}{2^*_{s}(\alpha)}}.
\end{align*}
Therefore,
\begin{equation}\label{eq4.5}
 \int_{B_{1}({\tilde{x}}_k)} \frac{|v_k(x)|^{2^*_{s}(\alpha) }}{  |x|^{\alpha} }dx  \geq C>0.
\end{equation}
We claim that $\{  {\tilde{x}}_k \}$ is bounded. Indeed, if on the contrary, $|{\tilde{x}}_k| \rightarrow  +\infty$, then for any $x \in B_{1}({\tilde{x}}_k)$, $|x| \geq |{\tilde{x}}_k|-1$ for $k$ large. Therefore,
\begin{align*}
\int_{B_{1}({\tilde{x}}_k)} \frac{|v_k(x)|^{2^*_{s}(\alpha)}}{  |x|^{\alpha} }dx  &\leq    \frac{1}{  (|{\tilde{x}}_k|-1)^{\alpha} }   \int_{B_{1}({\tilde{x}}_k)}   |v_k(x)|^{2^*_{s}(\alpha)}    dx  \\
&\leq       \frac{C}{  (|{\tilde{x}}_k|-1)^{\alpha} }   { \Big( \int_{B_{1}({\tilde{x}}_k)}   |v_k(x)|^{2^*_{s}}    dx  \Big) }^{\frac{n-\alpha}{n}} \\
&\leq       \frac{C}{  (|{\tilde{x}}_k|-1)^{\alpha} } ||v_k||^{\frac{2(n-\alpha)}{n-2s}}_{{\dot{H}}^s(\R^{n})}  \leq       \frac{\tilde{C}}{  (|{\tilde{x}}_k|-1)^{\alpha} }\rightarrow 0.
\end{align*}
as $k \rightarrow  +\infty$, which contradicts to (\ref{eq4.5}). Hence, $\{  {\tilde{x}}_k \}$ is bounded, from (\ref{eq4.4}) we may find $R>0$ such that
\begin{equation}\label{eq4.6}
  \int_{B_{R}(0)} \frac{|v_k(x)|^2}{  |x|^{2r} }dx  \geq C_1>0.
\end{equation}
Since $||{v}_k||=||{u}_k||\leq C$, there exists a $v\in {\dot{H}}^s(\R^{n})$ such that
\begin{align} \label{eq4.7}
  v_k \rightharpoonup v ~~\mbox{in} ~~\dot{H}^{s}(\R^{n}),~~~~~~ v_k \rightarrow v ~~\mbox{a.e. ~~~~on}~~ \R^n
\end{align}
up to subsequences. According to Lemma \ref{lemma2.3}, we have $\frac{v_k}{ {|x|}^{r}  }    \rightarrow   \frac{v}{ {|x|}^{r}  } ~~\mbox{in}~~L^2_{loc}(\R^{n})$ since $r=\frac{\alpha}{ 2^*_{s}(\alpha) }<s$, therefore
$$ \int_{B_{R}(0)} \frac{|v(x)|^2}{  |x|^{2r} }dx  \geq C_1>0, $$
and we deduce that $v\not \equiv 0$. We may verify as Lemma \ref{lemma2.8} that
\begin{align*}
1={B_{\alpha}(v_k,v_k)}={B_{\alpha}(v_k-v,v_k-v)}
+{B_{\alpha}(v,v)}+o(1).
\end{align*}
By the weak convergence $v_k \rightharpoonup v$ in ${\dot{H}}^s(\R^{n})$,
\begin{align*}
     S_{\mu}(n,s,\gamma,\alpha)&=\mathop {\lim }\limits_{k  \to \infty} ||v_k||^2=  ||v||^2+ \mathop {\lim }\limits_{k  \to \infty} ||v_k-v||^2   \\
     &\geq S_{\mu}(n,s,\gamma,\alpha)  \Big({B_{\alpha}(v,v)} \Big)^{   \frac{1}  {  { 2^{\#}_{\mu} }(\alpha) }  }    \\
     &+S_{\mu}(n,s,\gamma,\alpha)  \Big( \mathop {\lim }\limits_{k  \to \infty} {B_{\alpha}(v_k-v,v_k-v)} \Big)^{ \frac{1}{  2^{\#}_{\mu}(\alpha)   }} \\
     &\geq S_{\mu}(n,s,\gamma,\alpha)  \Big({B_{\alpha}(v,v)}
     + \mathop {\lim }\limits_{k  \to \infty} {B_{\alpha}(v_k-v,v_k-v)} \Big)^{ \frac{1}{  2^{\#}_{\mu}(\alpha)   }}\\
     %&\geq S_{\mu}(n,s,\gamma,\alpha)  \Big( \int_{\R^{n}} \int_{\R^{n}} \frac{  {|v(x)|}^{{ 2^{\#}_{\mu} }(\alpha)} {|v(y)|}^{{ 2^{\#}_{\mu} }(\alpha)} }{ {|x|}^{   {\delta_{\mu} (\alpha)}            }  {|x-y|}^{\mu} {|y|}^{   {\delta_{\mu} (\alpha)}            } } dx dy  \\
%     &+\mathop {\lim }\limits_{k  \to \infty} \int_{\R^{n}} \int_{\R^{n}} \frac{  {|v_k(x)-v(x)|}^{{ 2^{\#}_{\mu} }(\alpha)} {|v_k(y)-v(y)|}^{{ 2^{\#}_{\mu} }(\alpha)} }{ {|x|}^{   {\delta_{\mu} (\alpha)}            }  {|x-y|}^{\mu} {|y|}^{   {\delta_{\mu} (\alpha)}            } } dx dy \Big)^{ \frac{1}{  2^{\#}_{\mu}(\alpha)   }} \\
&=S_{\mu}(n,s,\gamma,\alpha).
\end{align*}
Here we use the fact that $(a+b)^{\frac{1}{  2^{\#}_{\mu}(\alpha)   }} \leq a^{\frac{1}{  2^{\#}_{\mu}(\alpha)   }}+b^{\frac{1}{  2^{\#}_{\mu}(\alpha)   }}$, $\forall a\geq0, b\geq0$ and $2^{\#}_{\mu}(\alpha)>1.$\\
So we have
$${B_{\alpha}(v,v)}=1,~~~~~~\mathop {\lim }\limits_{k  \to \infty}{B_{\alpha}(v_k-v,v_k-v)}=0,$$
since $v \not \equiv 0$. It results
$$ S_{\mu}(n,s,\gamma,\alpha)=||v||^2,~~~~\mathop {\lim }\limits_{k  \to \infty} ||v_k-v||^2=0.$$
By formula (A.11) in \cite{RSER},
$$ \int_{\R^{n}} |(-\Delta)^{\frac{s}{2} }|v||^2 dx \leq  \int_{\R^{n}} |(-\Delta)^{\frac{s}{2} }v|^2 dx ,  $$
Hence, $|v|$ is also a minimizer of $S_{\mu}(n,s,\gamma,\alpha)$, we can assume $v \geq 0$. Thus $S_{\mu}(n,s,\gamma,\alpha)$ is achieved if $ 0<\alpha<2s $ and $\gamma <\gamma_H .$

$\textbf{(2)}$ If $\alpha=0$ and $0\leq \gamma <\gamma_H$, we are inspired by the method introduced by R. Filippucci in \cite{RFPP} and S. Dipierro in \cite{SDLM}. Let $\{u_k\}$ be a minimizing sequence of $S_{\mu}(n,s,\gamma,0)$, that is
$$
{B_{0}(u_k,u_k)}=1,~~~~~~~~S_{\mu}(n,s,\gamma,0)\leq ||u_k||^2<S_{\mu}(n,s,\gamma,0)+\frac{1}{k}.$$
From the fractional Polya-Szeg$\ddot{o}$ inequality in \cite{YJPK} and formula (A.11) in \cite{RSER}, we have
$$ \int_{\R^{n}} |(-\Delta)^{\frac{s}{2} }{|u_k|}^*|^2 dx \leq  \int_{\R^{n}} |(-\Delta)^{\frac{s}{2} }|u_k||^2 dx \leq  \int_{\R^{n}} |(-\Delta)^{\frac{s}{2} }u_k|^2 dx $$
where ${|u_k|}^*$ is the symmetric decreasing rearrangement of $|u_k|$.
Furthermore, it is clear(Theorem 3.4 in \cite{ELMA}) that
$$1={B_{0}(|u_k|,|u_k|)} \leq {B_{0}({|u_k|}^*,{|u_k|}^*)},~~~~~~~~\int_{\R^{n}} {\frac{|u_k|^2}{|x|^{2s}}}dx \leq \int_{\R^{n}} {\frac{|{|u_k|}^*|^2}{|x|^{2s}}}dx.$$
Denote $v_k:={|u_k|}^*$, then $v_k$ is radial symmetric and decreasing. Since $0\leq \gamma <\gamma_H$, we have
$$
S_{\mu}(n,s,\gamma,0) \leq \frac{ ||v_k||^2 }
   {  {B_{0}(v_k,v_k)}^{\frac{1}{  2^{\#}_{\mu}  }}}\leq ||v_k||^2 \leq ||u_k||^2<S_{\mu}(n,s,\gamma,0)+\frac{1}{k}.$$
Therefore, $\{v_k\}$ is a minimizing sequence of $S_{\mu}(n,s,\gamma,0)$ and $||v_k||$ is uniformly bounded.\\
Noticing that ${B_{0}(v_k,v_k)} \geq 1$, the embeddings ${\dot{H}}^s(\R^{n}) \hookrightarrow L^{2^*_{s}}(\R^{n}) \hookrightarrow L^{2,n-2s}(\R^{n})$(See Section 3),  inequality (\ref{eq2.6}) and Lemma \ref{lemma3.6} imply that there exists $C>0$ such that
$$   0<C \leq ||v_k||_{  L^{2,{n-2s}}(\R^{n}) } \leq C^{-1} .$$
Therfore we may find ${\lambda}_k>0$ and $x_k \in \R^{n}$ such that
$$  {\lambda}_k^{-2s} \int_{B_{{\lambda}_k}(x_k)} |v_k(y)|^2 dy > ||v_k||^2_{  L^{2,{n-2s}}(\R^{n}) } -\frac{C}{2k} \geq C_1>0.$$
Let $\tilde{v}_k(x)={\lambda}_k^{ \frac{n-2s}{2} }v_k({\lambda}_kx)$ and ${\tilde{x}}_k=\frac{x_k}{{\lambda}_k}$, we see that $\{\tilde{v}_k\}$ is also a minimizing sequence of $S_{\mu}(n,s,\gamma,0)$ and satisfies
\begin{equation}\label{eq4.9}
\int_{B_{1}({\tilde{x}}_k)} |\tilde{v}_k(x)|^2dx  \geq C_1>0.
\end{equation}
Since $||\tilde{v}_k||=||{v}_k||\leq C$, there exists $\tilde{v} \in {\dot{H}}^s(\R^{n})$ such that $\tilde{v}_k \rightharpoonup \tilde{v} ~~\mbox{in} ~~\dot{H}^{s}(\R^{n})$ up to subsequences, we need to prove $\tilde{v}\not \equiv 0.$

Case(1): If ${\tilde{x}}_k $ is unbounded, we assume that $|{\tilde{x}}_k| \to +\infty$ up to subsequence. Since the sequence $\{ \tilde{v}_k(x) \}$ is radial symmetric and decreasing, from (\ref{eq4.9}), we have for all $k$ that
\begin{equation*}\label{eq4.10}
\int_{B_{2}(0)} |\tilde{v}_k(x)|^2dx  \geq \int_{B_{1}(0)} |\tilde{v}_k(x+{\tilde{x}}_k)|^2dx=\int_{B_{1}({\tilde{x}}_k)} |\tilde{v}_k(x)|^2dx  \geq  C_1>0.
\end{equation*}
Since ${\dot{H}}^s(\R^{n}) \hookrightarrow L^{2}_{loc}(\R^{n})$ is compact(see Corollary 7.2 of \cite{ENEV}), we have
\begin{equation*}\label{eq4.11}
\int_{B_{2}(0)} |\tilde{v}(x)|^2dx  \geq  C_1>0.
\end{equation*}

Case(2): If ${\tilde{x}}_k $ is bounded, from (\ref{eq4.9}) we may find $R>0$ such that
\begin{equation*}\label{eq4.12}
  \int_{B_{R}(0)}|\tilde{v}_k(x)|^2dx  \geq C_1>0
\end{equation*}
and we also derive
\begin{equation*}\label{eq4.13}
\int_{B_{R}(0)} |\tilde{v}(x)|^2dx  \geq  C_1>0.
\end{equation*}
Thus we have $\tilde{v}\not \equiv 0$. The rest is the same as the proof of Proposition \ref{pro1.7}-(1), then Proposition \ref{pro1.7}-(2) holds.

$\textbf{(3)}$ The proof is similar to Proposition \ref{pro1.7}-(1). Although Proposition \ref{pro1.7}-(3) has been proved in \cite{NGSS}, the strategy we adopted in Proposition \ref{pro1.7}-(1) is more direct and effective.

$\textbf{(4)}$ Imitate the proof of Proposition \ref{pro1.7}-(2).

\qed

\begin{remark}
To prove Proposition \ref{pro1.7}-(2), firstly we choose a minimizing sequence $\{u_k\}$ of $S_{\mu}(n,s,\gamma,0)$, then we prove $v_k={|u_k|}^*$ is also a minimizing sequence of $S_{\mu}(n,s,\gamma,0)$ since $0\leq \gamma <\gamma_H$. Since $v_k$ is radial symmetric and decreasing, we can easily eliminate vanishing. If $\alpha>0$ and $0\leq \gamma <\gamma_H$, the same strategy can be applied to the proof of Proposition \ref{pro1.7}-(1). When it comes to $\alpha>0$ and $\gamma <0$, we fail to prove that $v_k={|u_k|}^*$ is a minimizing sequence of $S_{\mu}(n,s,\gamma,\alpha)$, but (\ref{eq1.06}) and (\ref{eq1.6}) are very effective in this situation.
\end{remark}

\section{proof of  Theorem \ref{th1.1}}
We shall now use the minimizers of $S_{\mu}(n,s,\gamma,\alpha)$ and $\Lambda(n,s,\gamma,\beta)$ obtained in Proposition \ref{pro1.7}, to prove the existence of a nontrivial weak solution for equation (\ref{eq1.1}). Recall that, the energy functional associated to (\ref{eq1.1}) is:
\begin{equation}\label{eq5.1}
  I(u)=\frac{1}{2}||u||^2-\frac{1}{2^{*}_{s}(\beta)}\int_{\R^n}{\frac{{|u|}^{ {2^{*}_{s}}(\beta)}}{|x|^{\beta}}}dx-\frac{1}{2 \cdot { 2^{\#}_{\mu} }(\alpha) } B_{\alpha}(u,u),~~~~\forall u \in \dot{H}^{s}(\R^n)
\end{equation}
where $B_{\alpha}(\cdot,\cdot)$ was defined in (\ref{eq2.01}). Fractional Sobolev and Hardy-Sobolev inequalities yield that $I \in C^1(\dot{H}^{s}(\R^n),\R)$ such that
\begin{align*}
{\langle I'(u),\phi\rangle}&={\langle u,\phi\rangle}
   -\int_{\R^n}{\frac{ {|u|}^{ {2^{*}_{s}}(\beta)-2}u\phi }{|x|^{\beta}}}dx
   -\int_{\R^{n}} \big [   I_{\mu}* F_{\alpha}(\cdot,u)  \big](x)f_{\alpha}(x,u)\phi(x)dx.
\end{align*}
Note that a nontrivial critical point of $I$ is a nontrivial weak solution to equation (\ref{eq1.1}).

\begin{lemma}(Mountain pass lemma, \cite{AMPH})  \label{lemma5.1}
Let $(E,||\cdot||)$ be a Banach space and $I\in  C^1(E,\R) $ satisfying the following conditions:  \\
$(1)$  $I(0)=0 ,$ \\
$(2)$ There exist $\rho,r>0 $ such that $I(u)\geq \rho $ for all $u \in E$ with $||u||=r,$  \\
$(3)$ There exist $v_0 \in E$ such that $  \lim_{t \to +\infty} {\sup }I(tv_0)<0.$  \\
Let $t_0>0$ be such that $||t_0v_0||>r$ and $I(t_0v_0)<0$, and define
$$  c:=\mathop {\inf }\limits_{g\in \Gamma}   \mathop {\sup }\limits_{t\in [0,1]} I(g(t)),$$
where
$$\Gamma:=\Big \{  g \in C^0([0,1],E) :g(0)=0,g(1)=t_0v_0   \Big \}.$$
Then, $c\geq \rho >0$ and there exists a $(PS)$ sequence $\{u_k\} \subset E$ for $I$ at level $c$, i.e.
$$\lim_{k \to +\infty}I(u_k)=c~~\mbox{and}~~ \lim_{k \to +\infty} I'(u_k)=0~~\mbox{strongly in}~~E'.$$\\
\end{lemma}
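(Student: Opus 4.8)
The plan is to prove the two assertions — that $c \ge \rho > 0$ and that a $(PS)_c$ sequence exists — by the classical minimax scheme, using the path–sphere linking property for the lower bound and a quantitative deformation argument for the compactness-free existence of an almost-critical sequence.

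\textbf{Step 1: $c \ge \rho$.} First note $\Gamma \neq \emptyset$, since the segment $g_0(t) = t\,t_0 v_0$ lies in it, and $I$ is bounded on the compact arc $g_0([0,1])$, so $c < +\infty$. For any $g \in \Gamma$ one has $\|g(0)\| = 0 < r < \|t_0 v_0\| = \|g(1)\|$, and $t \mapsto \|g(t)\|$ is continuous, so by the intermediate value theorem there is $t_* \in (0,1)$ with $\|g(t_*)\| = r$; hypothesis $(2)$ gives $I(g(t_*)) \ge \rho$, whence $\sup_{t \in [0,1]} I(g(t)) \ge \rho$. Taking the infimum over $g \in \Gamma$ yields $c \ge \rho > 0$.

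\textbf{Step 2: existence of a $(PS)_c$ sequence.} I would argue by contradiction. If no such sequence existed, there would be $\varepsilon \in (0, c/2)$ and $\delta > 0$ with $\|I'(u)\|_{E'} \ge \delta$ for all $u$ in the slab $S := \{u \in E : |I(u) - c| \le 2\varepsilon\}$. Since $I \in C^1(E,\R)$ and $E$ is a Banach space, $I'$ admits a locally Lipschitz pseudo-gradient vector field on the regular set; multiplying it by a locally Lipschitz cut-off equal to $1$ on $S$ and $0$ outside $\{|I - c| \le 2\varepsilon'\}$ (with $\varepsilon < \varepsilon' < c/2$) and normalising, one solves the associated ODE flow to obtain a deformation $\eta \in C([0,1] \times E, E)$ with $\eta(0,\cdot) = \mathrm{id}$, $\eta(\tau,u) = u$ whenever $|I(u) - c| \ge 2\varepsilon'$, and $\eta(1, \{I \le c + \varepsilon\}) \subset \{I \le c - \varepsilon\}$. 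Pick $g \in \Gamma$ with $\sup_{t} I(g(t)) \le c + \varepsilon$ and set $\tilde g := \eta(1, g(\cdot))$. By $(1)$ and $(3)$ together with $t_0 > 0$ large, $I(0) = 0$ and $I(t_0 v_0) < 0$, both strictly below $c - 2\varepsilon'$, so $\eta(1,\cdot)$ fixes the endpoints and $\tilde g \in \Gamma$; but then $\sup_{t} I(\tilde g(t)) \le c - \varepsilon < c$, contradicting the definition of $c$. An equivalent route is Ekeland's variational principle applied to $\Phi(g) := \max_{t \in [0,1]} I(g(t))$ on the complete metric space $(\Gamma, d)$ with $d(g_1,g_2) = \max_{t} \|g_1(t) - g_2(t)\|$, extracting from an almost-optimal, almost-stationary path a point $u_k$ near its maximum set with $I(u_k) \to c$ and $I'(u_k) \to 0$.

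\textbf{Main obstacle.} The delicate point is the construction of the deformation $\eta$: because $E$ is only a Banach space, $I'(u)$ is a functional rather than a gradient vector, so one must invoke the existence of a locally Lipschitz pseudo-gradient field, check that the flow is globally defined in the time parameter, and arrange the cut-off so that $\eta$ is the identity outside a neighbourhood of the critical level while still decreasing $I$ by a definite amount across it — this last compatibility is precisely what forces us to use $(1)$ and $(3)$ to place the endpoints strictly below level $c$. Everything else (nonemptiness of $\Gamma$, the intermediate value step, boundedness of $I$ on a compact arc) is routine.
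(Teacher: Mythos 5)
The paper does not prove this lemma; it is quoted verbatim from Ambrosetti--Rabinowitz (reference \cite{AMPH}) and used as a black box, so there is no internal proof to compare against. Your argument is the standard and correct one: the intermediate value theorem applied to $t\mapsto\|g(t)\|$ gives $c\geq\rho$, and the quantitative deformation lemma (built from a truncated, normalised pseudo-gradient flow, with the endpoints fixed because $I(0)=0$ and $I(t_0v_0)<0$ lie strictly below the level $c-2\varepsilon'$) yields the $(PS)_c$ sequence by contradiction; the only detail left implicit is the usual smallness requirement relating $\varepsilon$ to $\delta$ in the deformation, which is harmless since $\varepsilon$ may be shrunk freely.
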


We now use Lemma \ref{lemma5.1} to prove the following Propositions.

\begin{proposition}  \label{prop5.2}
Let $s \in(0,1)$, $0<\alpha, \beta<2s< n $, $\mu \in (0,n)$ and $\gamma<\gamma_{H}$. Consider the functional $I$ defined in $(\ref{eq5.1})$ on the Banach space $\dot{H}^{s}(\R^n)$. Then there exists a $(PS)$ sequence $\{u_k\} \subset \dot{H}^{s}(\R^n)$ for $I$ at some $c\in (0,c^*)$, i.e.
\begin{equation}\label{eq5.2}
\lim_{k \to +\infty}I(u_k)=c~~\mbox{and}~~ \lim_{k \to +\infty} I'(u_k)=0~~\mbox{strongly in}~~\dot{H}^{s}(\R^n)'
\end{equation}
where $$c^*:=\min \Big \{ \frac{2^{\#}_{\mu}(\alpha)-1}{2 \cdot 2^{\#}_{\mu}(\alpha)} {S_{\mu}(n,s,\gamma,\alpha)}^{\frac{{ 2^{\#}_{\mu} }(\alpha)}{ { 2^{\#}_{\mu} }(\alpha)-1 }}, \frac{2s-\beta}{2(n-\beta)} \Lambda(n,s,\gamma,\beta)^{\frac{n-\beta}{2s-\beta}} \Big \}.$$

\end{proposition}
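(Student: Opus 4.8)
The plan is to check that the functional $I$ of \eqref{eq5.1} has the mountain pass geometry on $\dot{H}^{s}(\R^{n})$, apply Lemma \ref{lemma5.1} to obtain a $(PS)$ sequence at the minimax level, and then force that level strictly below $c^{*}$ by testing along the rays through the two extremals supplied by Proposition \ref{pro1.7}.

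First I would verify the hypotheses of Lemma \ref{lemma5.1}. Clearly $I(0)=0$. By the Hardy--Sobolev inequality \eqref{eq2.2} and the Hardy--Littlewood--Sobolev estimate \eqref{eq2.6}, on $\{\|u\|=r\}$ one has $I(u)\ge\tfrac12 r^{2}-Cr^{2^{*}_{s}(\beta)}-Cr^{2\cdot 2^{\#}_{\mu}(\alpha)}$, and since $2^{*}_{s}(\beta)>2$ and $2\cdot 2^{\#}_{\mu}(\alpha)>2$, a sufficiently small $r>0$ gives $I(u)\ge\rho>0$ there (the equivalence of $\|\cdot\|$ with the $\dot{H}^{s}$-norm, i.e. $\gamma<\gamma_{H}$, is used here). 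For any fixed $u\neq 0$ the numbers $\int_{\R^{n}}|u|^{2^{*}_{s}(\beta)}|x|^{-\beta}dx$ and $B_{\alpha}(u,u)$ are finite by \eqref{eq2.2}, \eqref{eq2.6} and strictly positive, so $I(tu)\to-\infty$ as $t\to+\infty$; fixing $t_{0}$ large with $\|t_{0}u\|>r$ and $I(t_{0}u)<0$ verifies the last condition. Lemma \ref{lemma5.1} then yields a $(PS)$ sequence at the level $c=\inf_{g\in\Gamma}\sup_{t\in[0,1]}I(g(t))\ge\rho>0$; in particular $c>0$.

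Next I would push $c$ below $c^{*}$. Write $c_{1}^{*}=\frac{2^{\#}_{\mu}(\alpha)-1}{2\cdot 2^{\#}_{\mu}(\alpha)}S_{\mu}(n,s,\gamma,\alpha)^{\frac{2^{\#}_{\mu}(\alpha)}{2^{\#}_{\mu}(\alpha)-1}}$ and $c_{2}^{*}=\frac{2s-\beta}{2(n-\beta)}\Lambda(n,s,\gamma,\beta)^{\frac{n-\beta}{2s-\beta}}$, so $c^{*}=\min\{c_{1}^{*},c_{2}^{*}\}$. Let $U_{\alpha}$ be a minimizer of $S_{\mu}(n,s,\gamma,\alpha)$ (Proposition \ref{pro1.7}(1)) and $V_{\beta}$ a minimizer of $\Lambda(n,s,\gamma,\beta)$ (Proposition \ref{pro1.7}(3)); after normalization, $\|U_{\alpha}\|^{2}=S_{\mu}(n,s,\gamma,\alpha)\,B_{\alpha}(U_{\alpha},U_{\alpha})^{1/2^{\#}_{\mu}(\alpha)}$ and $\|V_{\beta}\|^{2}=\Lambda(n,s,\gamma,\beta)\big(\int_{\R^{n}}|V_{\beta}|^{2^{*}_{s}(\beta)}|x|^{-\beta}dx\big)^{2/2^{*}_{s}(\beta)}$. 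For any $w\neq 0$ the map $t\mapsto I(tw)$ is positive for small $t>0$ and tends to $-\infty$, hence attains its maximum over $t\ge 0$ at some $t_{w}>0$; the straight segment $g(t)=t\,t_{0}w$ being an admissible path shows $c\le\max_{t\ge 0}I(tw)$ whenever $w$ is used as the endpoint datum. Dropping from $I(tU_{\alpha})$ the Hardy--Sobolev term, which is strictly positive at $t=t_{U_{\alpha}}>0$, and carrying out a one-variable maximization gives $\max_{t\ge 0}I(tU_{\alpha})<c_{1}^{*}$; symmetrically, dropping from $I(tV_{\beta})$ the term $\tfrac{1}{2\cdot 2^{\#}_{\mu}(\alpha)}B_{\alpha}(V_{\beta},V_{\beta})$ gives $\max_{t\ge 0}I(tV_{\beta})<c_{2}^{*}$. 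Now I would run Lemma \ref{lemma5.1} with $v_{0}=U_{\alpha}$ if $c_{1}^{*}\le c_{2}^{*}$ and with $v_{0}=V_{\beta}$ otherwise; for the corresponding choice $c\le\max_{t\ge 0}I(tv_{0})<\min\{c_{1}^{*},c_{2}^{*}\}=c^{*}$, which together with $c\ge\rho>0$ completes the proof.

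The routine parts are the geometry and the one-variable maximizations. The point requiring care is the \emph{strictness} of the inequalities $\max_{t}I(tU_{\alpha})<c_{1}^{*}$ and $\max_{t}I(tV_{\beta})<c_{2}^{*}$: it is essential that the maximum of $t\mapsto I(tw)$ be attained at a \emph{positive} $t$, so that the discarded critical term is genuinely positive there; this follows from $I(tw)>0$ for small $t$, $I(tw)\to-\infty$, and the finiteness and positivity of $\|w\|$, $\int|w|^{2^{*}_{s}(\beta)}|x|^{-\beta}$ and $B_{\alpha}(w,w)$. A secondary bookkeeping issue is that a single admissible endpoint $v_{0}$ only controls $c$ against one of $c_{1}^{*},c_{2}^{*}$, which is why one selects $v_{0}$ according to which of the two threshold values is the smaller.
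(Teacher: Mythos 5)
Your proposal is correct and follows essentially the same route as the paper: verify the mountain pass geometry via \eqref{eq2.2} and \eqref{eq2.6}, take the endpoint $v_0$ to be the minimizer ($U_{\gamma,\alpha}$ or $V_{\gamma,\beta}$) corresponding to whichever of the two threshold values is smaller, and obtain the strict bound $c<c^*$ by discarding the other critical term at the (positive) maximizing $t$. Your direct strictness argument is logically equivalent to the paper's contradiction argument based on the uniqueness of the maximizer of the truncated one-variable functional $f_1$.
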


\begin{proof}
We now verify the conditions of Lemma \ref{lemma5.1}.
For any $u\in \dot{H}^{s}(\R^{n})$,
\begin{align*}
  I(u)&=\frac{1}{2}||u||^2-\frac{1}{2^{*}_{s}(\beta)}\int_{\R^n}{\frac{{|u|}^{ {2^{*}_{s}}(\beta)}}{|x|^{\beta}}}dx-\frac{1}{2 \cdot { 2^{\#}_{\mu} }(\alpha) } B_{\alpha}(u,u) \\
  &\geq\frac{1}{2}||u||^2-C_1||u||^{2^{*}_{s}(\beta)}-C_2||u||^{  2 \cdot 2^{\#}_{\mu} (\alpha) }.
\end{align*}
Since $s \in(0,1)$, $0<\alpha, \beta<2s< n $ and $\mu \in (0,n)$, we have that $2^{*}_{s}(\beta)>2$ and $2 \cdot 2^{\#}_{\mu} (\alpha)>2^{*}_{s}(\alpha)>2$. Therefore, there exists $r>0$ small enough such that
$$  \mathop {\inf }\limits_{||u||=r} I(u)>0=I(0),     $$
so $(1)$ and $(2)$ of Lemma \ref{lemma5.1} are satisfied.\\
From
\begin{align*}
I(tu)=\frac{t^2}{2}||u||^2-\frac{t^ {2^{*}_{s}(\beta)}}{2^{*}_{s}(\beta)}\int_{\R^n}{\frac{{|u|}^{ {2^{*}_{s}}(\beta)}}{|x|^{\beta}}}dx-\frac{ t^{2 \cdot { 2^{\#}_{\mu} }(\alpha) } }{2 \cdot { 2^{\#}_{\mu} }(\alpha) } B_{\alpha}(u,u),
\end{align*}
we derive that $\lim_{t \to +\infty} I(tu)=-\infty$ for any $u\in \dot{H}^{s}(\R^{n})$. Consequently, for any fixed $v_0\in \dot{H}^{s}(\R^{n})$, there exists ${t_{v_0}}>0$ such that $||t_{v_0} v_0||>r$ and $I(t_{v_0} v_0)<0$. So $(3)$ of Lemma \ref{lemma5.1} is satisfied.

Using (1) and (3) in Proposition \ref{pro1.7}, we obtain a minimizer $U_{\gamma,\alpha} \in \dot{H}^{s}(\R^n)$ for $S_{\mu}(n,s,\gamma,\alpha)$ and $V_{\gamma,\beta} \in \dot{H}^{s}(\R^n)$ for $\Lambda(n,s,\gamma,\beta)$ respectively. So there exist
$$
v_0:=\left \{ \begin{array}{ll} U_{\gamma,\alpha},~~~~\mbox{if}&  \frac{2^{\#}_{\mu}(\alpha)-1}{2 \cdot 2^{\#}_{\mu}(\alpha)} {S_{\mu}(n,s,\gamma,\alpha)}^{\frac{{ 2^{\#}_{\mu} }(\alpha)}{ { 2^{\#}_{\mu} }(\alpha)-1 }} \leq \frac{2s-\beta}{2(n-\beta)} \Lambda(n,s,\gamma,\beta)^{\frac{n-\beta}{2s-\beta}}; \\ V_{\gamma,\beta} ,~~~~\mbox{if}&  \frac{2^{\#}_{\mu}(\alpha)-1}{2 \cdot 2^{\#}_{\mu}(\alpha)} {S_{\mu}(n,s,\gamma,\alpha)}^{\frac{{ 2^{\#}_{\mu} }(\alpha)}{ { 2^{\#}_{\mu} }(\alpha)-1 }} > \frac{2s-\beta}{2(n-\beta)} \Lambda(n,s,\gamma,\beta)^{\frac{n-\beta}{2s-\beta}}
\end{array}    \right .   $$
and $t_0>0$ such that $||t_0v_0||>r$ and $I(t_0v_0)<0$. We can define
$$  c:=\mathop {\inf }\limits_{g\in \Gamma}   \mathop {\sup }\limits_{t\in [0,1]} I(g(t))$$
where
$$\Gamma:=\Big \{  g \in C^0([0,1],\dot{H}^{s}(\R^{n})) :g(0)=0, g(1)=t_0v_0   \Big \}.$$
Clearly we have $c>0$. For the case of $v_0=U_{\gamma,\alpha}$, we can derive that $$0<c<\frac{2^{\#}_{\mu}(\alpha)-1}{2 \cdot 2^{\#}_{\mu}(\alpha)} {S_{\mu}(n,s,\gamma,\alpha)}^{\frac{{ 2^{\#}_{\mu} }(\alpha)}{ { 2^{\#}_{\mu} }(\alpha)-1 }}.$$
In fact, $\forall t \geq 0$, we have
\begin{align*}
I(tU_{\gamma,\alpha}) \leq f_1(t):= \frac{t^2}{2}||U_{\gamma,\alpha}||^2-\frac{ t^{2 \cdot { 2^{\#}_{\mu} }(\alpha) } }{2 \cdot { 2^{\#}_{\mu} }(\alpha) } B_{\alpha}(U_{\gamma,\alpha},U_{\gamma,\alpha}).
\end{align*}
Straightforward computations yield that $f_1(t)$ attains its maximum at the point
 $$\tilde{t}=\Big(  \frac{||U_{\gamma,\alpha}||^2}{  B_{\alpha}(U_{\gamma,\alpha},U_{\gamma,\alpha}) }  \Big)^{\frac{1}{ 2[{ 2^{\#}_{\mu} }(\alpha)-1] }}$$
and
$$ \mathop {\sup }\limits_{t \geq 0} f_1(t)=\frac{2^{\#}_{\mu}(\alpha)-1}{2 \cdot 2^{\#}_{\mu}(\alpha)} \Big(  \frac{||U_{\gamma,\alpha}||^2}{  {B_{\alpha}(U_{\gamma,\alpha},U_{\gamma,\alpha})}^{ \frac{1}{2^{\#}_{\mu}(\alpha)} } }  \Big)^{\frac{{ 2^{\#}_{\mu} }(\alpha)}{ { 2^{\#}_{\mu} }(\alpha)-1 }}=\frac{2^{\#}_{\mu}(\alpha)-1}{2 \cdot 2^{\#}_{\mu}(\alpha)} {S_{\mu}(n,s,\gamma,\alpha)}^{\frac{{ 2^{\#}_{\mu} }(\alpha)}{ { 2^{\#}_{\mu} }(\alpha)-1 }}.$$
We obtain that,
\begin{equation} \label{eq5.4}
 \mathop {\sup }\limits_{t \geq 0} I({tU_{\gamma,\alpha}}) \leq  \mathop {\sup }\limits_{t \geq 0} f_1(t)=\frac{2^{\#}_{\mu}(\alpha)-1}{2 \cdot 2^{\#}_{\mu}(\alpha)} {S_{\mu}(n,s,\gamma,\alpha)}^{\frac{{ 2^{\#}_{\mu} }(\alpha)}{ { 2^{\#}_{\mu} }(\alpha)-1 }}.
\end{equation}
The equality does not hold in (\ref{eq5.4}), otherwise, we would have that $\mathop {\sup }\limits_{t \geq 0} I({tU_{\gamma,\alpha}}) =  \mathop {\sup }\limits_{t \geq 0} f_1(t)$. Let $t_1>0$ where $\mathop {\sup }\limits_{t \geq 0} I(tU_{\gamma,\alpha})$ is attained. We have
$$ f_1(t_1)-\frac{t_1^ {2^{*}_{s}(\beta)}}{2^{*}_{s}(\beta)}\int_{\R^n}{\frac{{|U_{\gamma,\alpha}|}^{ {2^{*}_{s}}(\beta)}}{|x|^{\beta}}}dx=f_1(\tilde{t})$$
which means that $f_1(t_1)>f_1(\tilde{t})$ since $t_1>0$. This contradicts the fact that $\tilde{t}$ is the unique maximum point of $f_1(t)$.
Thus
\begin{equation} \label{eq5.5}
 \mathop {\sup }\limits_{t \geq 0} I({tU_{\gamma,\alpha}})<  \mathop {\sup }\limits_{t \geq 0} f_1(t)=\frac{2^{\#}_{\mu}(\alpha)-1}{2 \cdot 2^{\#}_{\mu}(\alpha)} {S_{\mu}(n,s,\gamma,\alpha)}^{\frac{{ 2^{\#}_{\mu} }(\alpha)}{ { 2^{\#}_{\mu} }(\alpha)-1 }}.
\end{equation}

For the case of $v_0=V_{\gamma,\beta}$, similarly, we can verify
\begin{equation} \label{eq5.6}
 \mathop {\sup }\limits_{t \geq 0} I({tV_{\gamma,\beta}}) <\frac{2s-\beta}{2(n-\beta)} \Lambda(n,s,\gamma,\beta)^{\frac{n-\beta}{2s-\beta}}
\end{equation}
and thus $0<c<\frac{2s-\beta}{2(n-\beta)} \Lambda(n,s,\gamma,\beta)^{\frac{n-\beta}{2s-\beta}}$.

From (\ref{eq5.5}) and (\ref{eq5.6}), we have
$$0<c<c^*:=\min \Big \{ \frac{2^{\#}_{\mu}(\alpha)-1}{2 \cdot 2^{\#}_{\mu}(\alpha)} {S_{\mu}(n,s,\gamma,\alpha)}^{\frac{{ 2^{\#}_{\mu} }(\alpha)}{ { 2^{\#}_{\mu} }(\alpha)-1 }}, \frac{2s-\beta}{2(n-\beta)} \Lambda(n,s,\gamma,\beta)^{\frac{n-\beta}{2s-\beta}} \Big \}.$$
Since (1)-(3) of Lemma \ref{lemma5.1} are satisfied, there exists a sequence $\{u_k\} \subset \dot{H}^{s}(\R^n)$ such that
$$\lim_{k \to +\infty}I(u_k)=c~~\mbox{and}~~ \lim_{k \to +\infty} I'(u_k)=0~~\mbox{strongly in}~~\dot{H}^{s}(\R^n)'.$$  \\
\end{proof}

\begin{proposition}  \label{prop5.02}
Let $s \in(0,1)$, $n>2s$, $\alpha=0<\beta<2s$ or $\beta=0<\alpha<2s$, $\mu \in (0,n)$ and $0\leq \gamma<\gamma_{H}$. Consider the functional $I$ defined in $(\ref{eq5.1})$ on the Banach space $\dot{H}^{s}(\R^n)$. Then there exists a $(PS)$ sequence $\{u_k\} \subset \dot{H}^{s}(\R^n)$ for $I$ at some $c\in (0,c^*)$, i.e.
\begin{equation*}
\lim_{k \to +\infty}I(u_k)=c~~\mbox{and}~~ \lim_{k \to +\infty} I'(u_k)=0~~\mbox{strongly in}~~\dot{H}^{s}(\R^n)'
\end{equation*}
where $$c^*:=\min \Big \{ \frac{2^{\#}_{\mu}(\alpha)-1}{2 \cdot 2^{\#}_{\mu}(\alpha)} {S_{\mu}(n,s,\gamma,\alpha)}^{\frac{{ 2^{\#}_{\mu} }(\alpha)}{ { 2^{\#}_{\mu} }(\alpha)-1 }}, \frac{2s-\beta}{2(n-\beta)} \Lambda(n,s,\gamma,\beta)^{\frac{n-\beta}{2s-\beta}} \Big \}.$$
\end{proposition}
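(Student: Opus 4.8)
The plan is to follow the proof of Proposition~\ref{prop5.2} almost word for word, the only genuinely new point being to check that the mountain pass geometry and the construction of the test function survive the degeneracy $\alpha=0$ or $\beta=0$. First I would verify conditions $(1)$--$(3)$ of Lemma~\ref{lemma5.1}. Exactly as for Proposition~\ref{prop5.2}, the fractional Sobolev and Hardy--Sobolev inequalities together with (\ref{eq2.6}) give
$$ I(u)\geq \frac{1}{2}\|u\|^2 - C_1\|u\|^{2^{*}_{s}(\beta)} - C_2\|u\|^{2\cdot 2^{\#}_{\mu}(\alpha)},\qquad \forall u\in\dot{H}^{s}(\R^n). $$
Here one observes that even when $\beta=0$ one has $2^{*}_{s}(0)=\frac{2n}{n-2s}>2$, and even when $\alpha=0$ one has $2\cdot 2^{\#}_{\mu}(0)=\frac{2(2n-\mu)}{n-2s}>\frac{2n}{n-2s}>2$ because $\mu\in(0,n)$; hence both exponents strictly exceed $2$, so $(1)$ and $(2)$ hold for $r>0$ small, and $(3)$ follows as before from $\lim_{t\to+\infty}I(tu)=-\infty$ for every $u\in\dot{H}^{s}(\R^n)$.

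Next I would construct the test function $v_0$. By Proposition~\ref{pro1.7} the relevant infima are attained: if $\alpha=0<\beta<2s$ one uses part $(2)$ (valid precisely because $0\leq\gamma<\gamma_{H}$) to obtain a minimizer $U_{\gamma,\alpha}$ of $S_{\mu}(n,s,\gamma,\alpha)$ and part $(3)$ (valid since $\gamma<\gamma_{H}$) to obtain a minimizer $V_{\gamma,\beta}$ of $\Lambda(n,s,\gamma,\beta)$; if $\beta=0<\alpha<2s$ one uses parts $(1)$ and $(4)$ instead. Then, exactly as in Proposition~\ref{prop5.2}, I set $v_0=U_{\gamma,\alpha}$ when $\frac{2^{\#}_{\mu}(\alpha)-1}{2\cdot 2^{\#}_{\mu}(\alpha)}S_{\mu}(n,s,\gamma,\alpha)^{\frac{2^{\#}_{\mu}(\alpha)}{2^{\#}_{\mu}(\alpha)-1}}\leq\frac{2s-\beta}{2(n-\beta)}\Lambda(n,s,\gamma,\beta)^{\frac{n-\beta}{2s-\beta}}$ and $v_0=V_{\gamma,\beta}$ otherwise, pick $t_0>0$ with $\|t_0v_0\|>r$ and $I(t_0v_0)<0$, and define $c=\inf_{g\in\Gamma}\sup_{t\in[0,1]}I(g(t))$ with $\Gamma=\{g\in C^0([0,1],\dot{H}^{s}(\R^n)):g(0)=0,\ g(1)=t_0v_0\}$.

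It remains to prove $0<c<c^*$. The lower bound $c>0$ is immediate from $(2)$ of Lemma~\ref{lemma5.1}. For the strict upper bound, take $v_0=U_{\gamma,\alpha}$; then for $t\geq0$ one has $I(tU_{\gamma,\alpha})\leq f_1(t):=\frac{t^2}{2}\|U_{\gamma,\alpha}\|^2-\frac{t^{2\cdot 2^{\#}_{\mu}(\alpha)}}{2\cdot 2^{\#}_{\mu}(\alpha)}B_{\alpha}(U_{\gamma,\alpha},U_{\gamma,\alpha})$, and since $U_{\gamma,\alpha}$ realizes $S_{\mu}(n,s,\gamma,\alpha)$ one computes $\sup_{t\geq0}f_1(t)=\frac{2^{\#}_{\mu}(\alpha)-1}{2\cdot 2^{\#}_{\mu}(\alpha)}S_{\mu}(n,s,\gamma,\alpha)^{\frac{2^{\#}_{\mu}(\alpha)}{2^{\#}_{\mu}(\alpha)-1}}$. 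Because the dropped term $\frac{t^{2^{*}_{s}(\beta)}}{2^{*}_{s}(\beta)}\int_{\R^n}\frac{|U_{\gamma,\alpha}|^{2^{*}_{s}(\beta)}}{|x|^{\beta}}dx$ is strictly positive for $t>0$, the argument of (\ref{eq5.4})--(\ref{eq5.5}) gives $\sup_{t\geq0}I(tU_{\gamma,\alpha})<\sup_{t\geq0}f_1(t)$, so $c<c^*$ in this case; the case $v_0=V_{\gamma,\beta}$ is symmetric, using $2^{*}_{s}(\beta)>2$ and $B_{\alpha}(V_{\gamma,\beta},V_{\gamma,\beta})>0$. Then Lemma~\ref{lemma5.1} furnishes a $(PS)$ sequence $\{u_k\}\subset\dot{H}^{s}(\R^n)$ for $I$ at level $c\in(0,c^*)$, as claimed.

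I do not expect a real obstacle: the whole content is that the proof of Proposition~\ref{prop5.2} is robust when one parameter vanishes, provided the correct cases of Proposition~\ref{pro1.7} are invoked. The one place requiring care is that attainment of $S_{\mu}(n,s,\gamma,0)$ (respectively $\Lambda(n,s,\gamma,0)$) needs $\gamma\geq0$, which is exactly why the hypothesis here reads $0\leq\gamma<\gamma_{H}$ rather than $\gamma<\gamma_{H}$.
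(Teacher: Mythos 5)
Your proposal is correct and follows essentially the same route as the paper, which simply says to imitate the proof of Proposition \ref{prop5.2} using the appropriate parts of Proposition \ref{pro1.7} for the minimizers. In fact you are slightly more careful than the paper's one-line proof, which cites only parts (2) and (4) of Proposition \ref{pro1.7}, whereas (as you correctly note) each of the two cases requires pairing one ``zero-parameter'' part with one ``positive-parameter'' part, since only one of $\alpha,\beta$ vanishes.
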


\begin{proof}
Imitate the proof of Proposition \ref{prop5.2}. Since $0\leq \gamma<\gamma_{H}$, using (2) and (4) in Proposition \ref{pro1.7}, we obtain a minimizer $U_{\gamma} \in \dot{H}^{s}(\R^n)$ for $S_{\mu}(n,s,\gamma,0)$ and $V_{\gamma} \in \dot{H}^{s}(\R^n)$ for $\Lambda(n,s,\gamma,0)$ respectively. The rest is standard.
\end{proof}

\noindent\textbf{Proof of Theorem \ref{th1.1}}

\noindent \textbf{(I)} The case $s \in(0,1)$, $0<\alpha,\beta<2s<n$, $\mu \in (0,n)$ and $\gamma<\gamma_{H}$.

Let $\{u_k\}_{k\in \N}$ be a $(PS)$ sequence as in Proposition \ref{prop5.2}, i.e.
$$I(u_k) \rightarrow c  ,~~ I'(u_k) \rightarrow 0 ~~\mbox{strongly in}~~\dot{H}^{s}(\R^n)' ~~\mbox{as}~~ k \rightarrow +\infty.$$
Then
\begin{equation}\label{eq5.7}
  I(u_k)=\frac{1}{2}||u_k||^2-\frac{1}{2^{*}_{s}(\beta)}
  \int_{\R^n}{\frac{{|u_k|}^{ {2^{*}_{s}}(\beta)}}{|x|^{\beta}}}dx-\frac{1}{2 \cdot { 2^{\#}_{\mu} }(\alpha) } B_{\alpha}(u_k,u_k)=c+o(1)
\end{equation}
and
\begin{equation}\label{eq5.8}
  \langle I'(u_k),u_k\rangle=||u_k||^2-\int_{\R^n}{\frac{{|u_k|}^{ {2^{*}_{s}}(\beta)}}{|x|^{\beta}}}dx-B_{\alpha}(u_k,u_k)=o(1).
\end{equation}

From (\ref{eq5.7}) and (\ref{eq5.8}), if $2 \cdot { 2^{\#}_{\mu} }(\alpha) \geq {2^{*}_{s}(\beta)}>2$, we have
\begin{align*}
c+o(1)||u_k||&=I(u_k)-\frac{1}{2^{*}_{s}(\beta)}  \langle I'(u_k),u_k\rangle \geq  \Big(\frac{1}{2}-\frac{1}{2^{*}_{s}(\beta)} \Big)||u_k||^2.
\end{align*}
If ${ 2^{*}_{s}(\beta)}> 2 \cdot { 2^{\#}_{\mu} }(\alpha)>2$, we have
\begin{align*}
c+o(1)||u_k||&=I(u_k)-\frac{1}{2 \cdot { 2^{\#}_{\mu} }(\alpha) }  \langle I'(u_k),u_k\rangle \geq  \Big(\frac{1}{2}-\frac{1}{2 \cdot { 2^{\#}_{\mu} }(\alpha) } \Big)||u_k||^2.
\end{align*}
Thus, $\{u_k\}_{k\in \N}$ is bounded in $\dot{H}^{s}(\R^{n})$, then from (\ref{eq5.8}) there exists a subsequence, still denoted by $\{u_k\}$, such that $||u_k||^2\rightarrow b$, $\int_{\R^n}{\frac{{|u_k|}^{ {2^{*}_{s}}(\beta)}}{|x|^{\beta}}}dx \rightarrow d_1 $, $B_{\alpha}(u_k,u_k) \rightarrow d_2$ and $$b=d_1+d_2.$$
By the definition of $\Lambda(n,s,\gamma,\beta)$ and $S_{\mu}(n,s,\gamma,\alpha)$, we get
\begin{equation*}
  d_1^{\frac{2}{  2^{*}_{s}(\beta) }}\Lambda(n,s,\gamma,\beta) \leq b,\quad d_2^{\frac{1}{  2^{\#}_{\mu}(\alpha) }}S_{\mu}(n,s,\gamma,\alpha)\leq b.
\end{equation*}
Therefore
\begin{equation*}
  d_1^{\frac{2}{  2^{*}_{s}(\beta) }}\Lambda(n,s,\gamma,\beta) \leq d_1+d_2,\quad d_2^{\frac{1}{  2^{\#}_{\mu}(\alpha) }}S_{\mu}(n,s,\gamma,\alpha)\leq d_1+d_2.
\end{equation*}
These inequalities lead to
\begin{equation}\label{eq5.9}
  d_1^{\frac{2}{  2^{*}_{s}(\beta) }}\Big(\Lambda(n,s,\gamma,\beta)-  d_1^{\frac{2^{*}_{s}(\beta)-2}{  2^{*}_{s}(\beta) }}\Big) \leq d_2,~~~~d_2^{\frac{1}{  2^{\#}_{\mu}(\alpha) }}\Big(S_{\mu}(n,s,\gamma,\alpha)-  d_2^{\frac{2^{\#}_{\mu}(\alpha)-1}{  2^{\#}_{\mu}(\alpha) }} \Big)\leq d_1
\end{equation}

We claim that
$$\Lambda(n,s,\gamma,\beta)-  d_1^{\frac{2^{*}_{s}(\beta)-2}{  2^{*}_{s}(\beta) }}>0,~~~~S_{\mu}(n,s,\gamma,\alpha)-  d_2^{\frac{2^{\#}_{\mu}(\alpha)-1}{  2^{\#}_{\mu}(\alpha) }}>0.$$
In fact, since $c+o(1)||u_k||=I(u_k)-\frac{1}{2}  \langle I'(u_k),u_k\rangle $, we have
\begin{align*}
\Big( \frac{1}{2}-\frac{1}{2^{*}_{s}(\beta)}\Big)
  \int_{\R^n}{\frac{{|u_k|}^{ {2^{*}_{s}}(\beta)}}{|x|^{\beta}}}dx+\Big( \frac{1}{2}-\frac{1}{2 \cdot { 2^{\#}_{\mu} }(\alpha) } \Big) B_{\alpha}(u_k,u_k)=c+o(1)||u_k||,
\end{align*}
i.e.
\begin{align} \label{eq5.010}
\Big( \frac{1}{2}-\frac{1}{2^{*}_{s}(\beta)}\Big)
 d_1+\Big( \frac{1}{2}-\frac{1}{2 \cdot { 2^{\#}_{\mu} }(\alpha) } \Big) d_2=c,
\end{align}
then
\begin{align*}
d_1  \leq  \frac{2(n-\beta)}{2s-\beta} c,~~~~~~~~
d_2  \leq  \frac{2 \cdot 2^{\#}_{\mu}(\alpha)}{2^{\#}_{\mu}(\alpha)-1} c.
\end{align*}
Using the upper bound of $d_1$, $d_2$ and the fact that $0<c<c^*$, we have
$$\Lambda(n,s,\gamma,\beta)-  d_1^{\frac{2^{*}_{s}(\beta)-2}{  2^{*}_{s}(\beta) }}\geq A_1>0,~~~~S_{\mu}(n,s,\gamma,\alpha)-  d_2^{\frac{2^{\#}_{\mu}(\alpha)-1}{  2^{\#}_{\mu}(\alpha) }}\geq A_2>0$$
where $A_1=\Lambda(n,s,\gamma,\beta)-  [\frac{2(n-\beta)}{2s-\beta} c]^{\frac{2^{*}_{s}(\beta)-2}{  2^{*}_{s}(\beta) }}$ and $A_2=S_{\mu}(n,s,\gamma,\alpha)-  [\frac{2 \cdot 2^{\#}_{\mu}(\alpha)}{2^{\#}_{\mu}(\alpha)-1} c]^{\frac{2^{\#}_{\mu}(\alpha)-1}{  2^{\#}_{\mu}(\alpha) }}$.
Thus (\ref{eq5.9}) imply
\begin{equation*}
  d_1^{\frac{2}{  2^{*}_{s}(\beta) }}A_1 \leq d_2,~~~~d_2^{\frac{1}{  2^{\#}_{\mu}(\alpha) }}A_2\leq d_1.
\end{equation*}

If $d_1=0$ and $d_2=0$, then (\ref{eq5.010}) implies that $c=0$, a contradiction with $c>0$. Therefore $d_1>0$ and $d_2>0$, we can choose $\varepsilon_0>0$ such that $d_1\geq\varepsilon_0>0$ and $d_2\geq\varepsilon_0>0$, so there exists a $K>0$ such that $k\geq K$ and
 $$
 \int_{\R^n}{\frac{{|u_k|}^{ {2^{*}_{s}}(\beta)}}{|x|^{\beta}}}dx >\varepsilon_0/2,~~~~  B_{\alpha}(u_k,u_k)>\varepsilon_0/2.$$
Then inequality (\ref{eq2.6}), the embeddings (\ref{eq1.06}) and improved Sobolev inequality (\ref{eq1.6}) imply that there exists $C>0$ such that
$$   0<C \leq ||u_k||_{  L^{2,{n-2s}+2r}(\R^{n},|y|^{-2r}) } \leq C^{-1}$$
where $r=\frac{\alpha}{ 2^*_{s}(\alpha) }$.
For any $k> K$, we may find ${\lambda}_k>0$ and $x_k \in \R^{n}$ such that
$$  {\lambda}_k^{-2s+2r} \int_{B_{{\lambda}_k}(x_k)} \frac{|u_k(y)|^2}{  |y|^{2r} }dy > ||u_k||^2_{  L^{2,{n-2s}+2r}(\R^{n},|y|^{-2r}) } -\frac{C}{2k} \geq C_1>0.$$
Let ${v}_k(x)=\lambda_k^{ \frac{n-2s}{2} }u_k({\lambda}_k x)$, since $||{v}_k||=||{u}_k||\leq C$, there exists a $v\in {\dot{H}}^s(\R^{n})$ such that
\begin{align*}
 & v_k \rightharpoonup v ~~\mbox{in} ~~\dot{H}^{s}(\R^{n})
\end{align*}
Similar to the proof of Proposition \ref{pro1.7}-(1) in Section 4, we can prove that $v \not \equiv 0$.

In addition, the boundedness of $\{{v}_k\}$ in ${\dot{H}}^s(\R^{n})$ implies that $ \{ {|v_k|}^{{2^{*}_{s}({\beta})}-2}v_k  \}$ is bounded in $L^{     \frac{{2^{*}_{s}}({\beta})}{{2^{*}_{s}({\beta})}-1}    }(\R^{n},|x|^{-{\beta}})$ and
\begin{align} \label{eq5.14}
 & {|v_k|}^{{2^{*}_{s}({\beta})}-2}v_k \rightharpoonup  {|v|}^{{2^{*}_{s}({\beta})}-2}v  ~~\mbox{in} ~~L^{     \frac{{2^{*}_{s}}({\beta})}{{2^{*}_{s}({\beta})}-1}    }(\R^{n},|x|^{-{\beta}}).
\end{align}
For any $\phi \in {L}^{2^*_{s}(\alpha)}(\R^{n},{|x|}^{-\alpha})$, Lemma \ref{lemma2.9} implies that
\begin{align} \label{eq5.15}
& \mathop {\lim }\limits_{k  \to \infty}  \int_{\R^{n}}
\big [   I_{\mu}* F_{\alpha}(\cdot,v_k)  \big](x)f_{\alpha}(x,v_k)\phi(x)dx
= \int_{\R^{n}}
\big [   I_{\mu}* F_{\alpha}(\cdot,v)  \big](x)f_{\alpha}(x,v)\phi(x)dx.
\end{align}
Since ${\dot{H}}^s(\R^{n}) \hookrightarrow  {L}^{2^*_{s}(\alpha)}(\R^{n},|x|^{-\alpha})$, then  (\ref{eq5.15}) holds for any $\phi \in {\dot{H}}^s(\R^{n})$.

Finally, we need to check that $\{v_k\}_{k\in \N}$ is also a $(PS)$ sequence for $I$ at energy level $c$. Since the norms in ${\dot{H}}^s(\R^{n})$ and $L^{{2^{*}_{s}}(\alpha)}(\R^{n},|x|^{-\alpha}) $ are invariant under the special dilation ${v}_k(x)=\lambda_k^{ \frac{n-2s}{2} }u_k({\lambda}_k x)$, we have
\begin{equation*}
 \lim_{k \to +\infty} I(v_k)=c.
\end{equation*}
Moreover, $\forall \phi \in {\dot{H}}^s(\R^{n})$, we have ${\phi}_k(x)=\lambda_k^{ \frac{2s-n}{2} }\phi(\frac{x}{{\lambda}_k})\in {\dot{H}}^s(\R^{n})$. From $I'(u_k) \to 0 ~~\mbox{in}~~\dot{H}^{s}(\R^n)'$, we can derive that
\begin{equation*}
 \lim_{k \to +\infty}\langle I'(v_k), \phi \rangle=\lim_{k \to +\infty} \langle I'(u_k), {\phi}_k\rangle=0.
\end{equation*}

Thus (\ref{eq5.14}) and (\ref{eq5.15}) lead to
\begin{equation*}
 \langle I'(v), {\phi}\rangle=\lim_{k \to +\infty} \langle I'(v_k), \phi \rangle=0.
\end{equation*}
Hence $v$ is a nontrivial weak solution of (\ref{eq1.1}). \\

\noindent\textbf{(II)} The case $s \in(0,1)$, $0\leq\alpha,\beta<2s<n$ while $\alpha \cdot \beta=0$, $\mu \in (0,n)$ and $0\leq \gamma<\gamma_{H}$.

Case (i): $\alpha=0<\beta<2s$ or $\beta=0<\alpha<2s$;

In this case, the embeddings $(\ref{eq1.06})$ and inequality (\ref{eq1.6}) are still effective. Since $\alpha>0$ or $\beta>0$, we get a nontrivial weak solution to (\ref{eq1.1}) as above by using $(\ref{eq1.06})$, (\ref{eq1.6}) and Proposition \ref{prop5.02}.

Case (ii): $\alpha=0$ and $\beta=0$;

In this case, (\ref{eq1.06}) and (\ref{eq1.6}) are useless. Since the limit equation for (\ref{eq1.1}) is
$$ (-\Delta)^{s}v={|v(x)|}^{{ 2^{*}_{s} }-2}v(x)+ \Big( \int_{\R^{n}} \frac{ {|v(y)|}^{ {2^{\#}_{\mu} }} }{ {|x-y|}^{\mu} }dy   \Big)      {|v(x)|}^{{ 2^{\#}_{\mu} }-2}v(x),      $$
by using the Nehari manifold method in \cite{JYFW}, we can also get a non-trivial weak solution to (\ref{eq1.1}) if $0\leq \gamma<\gamma_{H}$.\\
\qed

\begin{remark}
The method we adopt to prove Theorem \ref{th1.1} can be applied to prove similar existence result for the p-Laplace type problem involving double critical exponents. To go further, we consider
\begin{equation} \label{eq5.16}
 -{\Delta}_pu-{\kappa} {\frac{|u|^{p-2}u}{|x|^{p}}}=\sum_{i=1}^{2}\Big( \int_{\R^{n}} \frac{ {|u(y)|}^{ {p^{\#}_{{\mu}_i} }(\alpha_i)} }{ {|x-y|}^{{\mu}_i} {|y|}^{  {\delta_{{\mu}_i} (\alpha_i)} } }dy   \Big) \frac{      {|u(x)|}^{{ p^{\#}_{{\mu}_i} }(\alpha_i)-2}u(x)      }{  {|x|}^{  {\delta_{{\mu}_i} (\alpha_i)}            }  },
  \ \ \ x \in {\R}^{n}
 \end{equation}
where $n\geq 2$ is an integer, $p \in (1,n)$, $\kappa < \bar{\kappa}:=[(n-p)/p]^p$, ${\mu}_i \in (0,n)$, while $\alpha_i \in (0,p)$, $p^{\#}_{{\mu}_i} (\alpha_i)=(1-\frac{{\mu}_i}{2n})\cdot p^{*}(\alpha_i)$, $\delta_{{\mu}_i} (\alpha_i)=(1-\frac{{\mu}_i}{2n}){\alpha_i}$ and $ p^{*}(\alpha_i)=p(n-\alpha_i)/(n-p)$ for $i=1,2$.
We say $u \in {D}^{1,p}(\R^{n})$ is a weak solution to $(\ref{eq5.16})$ if
$$ \int_{\R^{n}}\Big[{|\nabla u|}^{p-2}\nabla u \nabla \phi-{\kappa} {\frac{|u|^{p-2}u\phi}{|x|^{p}}}\Big]=\sum_{i=1}^{2} \int_{\R^{n}} \Big( \int_{\R^{n}} \frac{ {|u(y)|}^{ {p^{\#}_{{\mu}_i} }(\alpha_i)} }{ {|x-y|}^{{\mu}_i} {|y|}^{  {\delta_{{\mu}_i} (\alpha_i)} } }dy   \Big) \frac{      {|u(x)|}^{{ p^{\#}_{{\mu}_i} }(\alpha_i)-2}u(x)\phi      }{  {|x|}^{  {\delta_{{\mu}_i} (\alpha_i)}            }  } $$
for any $\phi \in {D}^{1,p}(\R^{n})$. The following main results hold:

\begin{theorem}\label{th5.17}
The problem (\ref{eq5.16}) possesses at least a nontrivial weak solution provided either \textbf{(I)} $n\geq 2$, $p \in (1,n)$, $0<\alpha_1, \alpha_2 <p$, $0<{\mu}_1, {\mu}_2<n$ and $\kappa < \bar{\kappa}$ \\
or \textbf{(II)} $n\geq 2$, $p \in (1,n)$, $0\leq \alpha_1, \alpha_2 <p$ while $\alpha_1 \cdot \alpha_2=0$, $0<{\mu}_1, {\mu}_2<n $ and $0\leq\kappa < \bar{\kappa}$.
\end{theorem}
\end{remark}

\end{document}